\theoremstyle{plain}
\newtheorem{theorem}{Theorem}[section]
\newtheorem{proposition}[theorem]{Proposition}
\newtheorem{lemma}[theorem]{Lemma}
\newtheorem{corollary}[theorem]{Corollary}
\theoremstyle{definition}
\newtheorem{definition}[theorem]{Definition}
\newtheorem{assumption}[theorem]{Assumption}
\theoremstyle{remark}
\newtheorem{remark}[theorem]{Remark}
\newtheorem{example}[theorem]{Example}
\newcommand{\R}{\mathbb{R}}
\newcommand{\e}{\mathrm{e}}
\newcommand{\pro}[2]{\left\langle{#1},{#2}\right\rangle}
\DeclareMathOperator*{\argmin}{argmin}
\begin{document}

\title{Contaminated Online Convex Optimization}

\author[1]{Tomoya Kamijima \footnote{A portion of this work was done while he was an intern at NEC Corporation. Email:~\texttt{kamijima-tomoya101@g.ecc.u-tokyo.ac.jp}}}
\author[2,3]{Shinji Ito \footnote{He is currently affiliated with the University of Tokyo. Email:~\texttt{shinji@mist.i.u-tokyo.ac.jp}}}

\affil[1]{Department of Mathematical Informatics, The University of Tokyo, Tokyo, Japan}
\affil[2]{NEC Corporation, Kanagawa, Japan}
\affil[3]{RIKEN AIP, Tokyo, Japan}

\maketitle

\begin{abstract}
  In online convex optimization, some efficient algorithms have been designed for each of the individual classes of objective functions, e.g., convex, strongly convex, and exp-concave.
  However, existing regret analyses, including those of universal algorithms, are limited to cases in which the objective functions in all rounds belong to the same class and cannot be applied to cases in which the property of objective functions may change in each time step. 
  This paper introduces a novel approach to address such cases, proposing a new regime we term as \textit{contaminated} online convex optimization.
  For the contaminated case, we demonstrate that the regret is lower bounded by $\Omega(\log T + \sqrt{k})$.
  Here, $k$ signifies the level of contamination in the objective functions.
  We also demonstrate that the regret is bounded by $O(\log T+\sqrt{k\log T})$ when universal algorithms are used. 
  When our proposed algorithms with additional information are employed, the regret is bounded by $O(\log T+\sqrt{k})$, which matches the lower bound. 
  These are intermediate bounds between a convex case and a strongly convex or exp-concave case.
\end{abstract}

\section{Introduction}\label{introduction}

Online convex optimization (OCO) is an optimization framework in which convex objective function changes for each time step $t\in\{1,2,\ldots, T\}$.
OCO has a lot of applications such as prediction from expert advice \citep{littlestone1994weighted,arora2012multiplicative}, spam filtering \citep{hazan16}, shortest paths \citep{awerbuch2004adaptive}, portfolio selection \citep{cover1991universal,hazan06}, and recommendation systems \citep{hazan2012projection}.
The performance of the OCO algorithm is compared by regret (defined in Section \ref{problemsetting}).
As shown in Table \ref{table}, it is already known that sublinear regret can be achieved for each function class, such as convex, strongly convex, and exp-concave, and the bound depends on the function class.
In addition, these upper bounds coincide with lower bounds, so these are optimal.
However, these optimal algorithms are applicable to one specific function class.
Therefore, we need prior knowledge about the function class to which the objective functions belong.

To solve this problem,
many universal algorithms that work well for multiple function classes by one algorithm have been proposed \citep{hazan2007adaptive,erven16,wang20,zhang22,yan2024universal}.
For example,
the MetaGrad algorithm proposed by \citet{erven16} achieves an $O(\sqrt{T})$-regret for any sequence of convex objective functions
and an $O(\log T)$-regret if all the objective functions are exp-concave.
Universal algorithms are useful in that they can be used without prior knowledge about the objective functions.
Some universal algorithms are introduced in Appendix~\ref{sec:universal}.

A notable limitation of the previous regret analyses about universal algorithms is that they apply only to cases where all the objective functions $f_1,f_2,\ldots,f_T$ belong to the same function class.
Therefore, for example, if some objective functions in a limited number of rounds are not strongly convex and if the other objective functions are strongly convex,
regret bounds for strongly convex functions in previous studies are not always valid.
This study aims to remove this limitation.

\begin{table*}[t]
\caption{Comparison of regret bounds. The parameter $d$ is the dimension of the decision set.}\label{table}
\vskip 0.15in
\begin{center}
\begin{small}
\begin{tabular}{lll}
\toprule
\textbf{FUNCTION CLASS}&\textbf{UPPER BOUNDS}&\textbf{LOWER BOUNDS}\\
\midrule
Convex
& $O(\sqrt T)$
& $\Omega(\sqrt T)$
\\
&
\citep{zinkevich03}
&
\citep{abernethy08}
\\
\cmidrule{1-3}
$\alpha$-exp-concave & $O((d/\alpha)\log T)$ & $\Omega((d/\alpha)\log T)$
\\
&
\citep{hazan06}
&
\citep{ordentlich98}
\\
\cmidrule{1-3}
$k$-contaminated $\alpha$-exp-concave
&$O((d/\alpha)\log T+\sqrt {kd\log T})$&$\Omega((d/\alpha)\log T+\sqrt {k})$
\\
&
\textbf{(This work, Corollary~\ref{ecmetagradmalerusc})}
&
\textbf{(This work, Corollary~\ref{eclower})}
\\
\cmidrule{1-3}
$k$-contaminated $\alpha$-exp-concave
&$O((d/\alpha)\log T+\sqrt {k})$&$\Omega((d/\alpha)\log T+\sqrt {k})$
\\
(with additional information)
&
\textbf{(This work, Theorem~\ref{thm:hybrid})}
&
\textbf{(This work, Corollary~\ref{eclower})}
\\
\cmidrule{1-3}
$\lambda$-strongly convex&$O((1/\lambda)\log T)$ &$\Omega((1/\lambda)\log T)$
\\
&
\citep{hazan06}
&
\citep{abernethy08}
\\
\cmidrule{1-3}
$k$-contaminated $\lambda$-strongly convex 
&$O((1/\lambda)\log T+\sqrt {k\log T})$&$\Omega((1/\lambda)\log T+\sqrt {k})$\\
&
\textbf{(This work, Corollary~\ref{scmetagradmalerusc})}
&
\textbf{(This work, Corollary~\ref{sclower})}
\\
\cmidrule{1-3}
$k$-contaminated $\lambda$-strongly convex 
&$O((1/\lambda)\log T+\sqrt {k})$&$\Omega((1/\lambda)\log T+\sqrt {k})$\\
(with additional information)
&
\textbf{(This work, Theorem~\ref{thm:hybrid})}
&
\textbf{(This work, Corollary~\ref{sclower})}
\\
\bottomrule
\end{tabular}
\end{small}
\end{center}
\vskip -0.1in
\end{table*}

\subsection{Our Contribution}

In this study, we consider the situation in which the function class of the objective $f_t$ may change in each time step $t$.
We call this situation \textit{contaminated} OCO.
More specifically,
in $k$-contaminated OCO with a function class $\mathcal{F}$,
we suppose that the objective function $f_t$ does not necessarily belong to the desired function class $\mathcal{F}$ (e.g., exp-concave or strongly convex functions) in $k$ rounds out of the total $T$ rounds.
Section \ref{problemsetting} introduces its formal definition and examples.
This class of OCO problems can be interpreted as an intermediate setting between general OCO problems and restricted OCO problems with $\mathcal{F}$ ($\mathcal{F}$-OCOs).
Intuitively,
the parameter $k \in [0, T]$ represents the magnitude of the impurity in the sequence of the objective functions,
and measures how close the problems are to $\mathcal{F}$-OCOs;
$k=0$ and $k=T$ respectively correspond to $\mathcal{F}$-OCO and general OCO.

The contribution of this study can be summarized as follows:
(i) We introduce contaminated OCO,
which captures the situations in which the class of the objective functions may change over different rounds.
(ii) We find that the Online Newton Step, one of the optimal algorithms for exp-concave functions, does not always work well in contaminated OCO, as discussed in Section~\ref{sec:ons}.
(iii) We present regret lower bounds for contaminated OCO in Section~\ref{lowerbounds}.
(iv) We show that some existing universal algorithms achieve better regret bounds than ONS for contaminated OCO, which details are given in Section~\ref{upperbounds}.
(v) We propose an algorithm that attains the optimal regret bounds under the additional assumption that information of the class of the previous objective function is accessible in Section~\ref{sec:hybrid}.

Regret bounds of contaminated cases compared to existing bounds are shown in Table \ref{table}.
The new upper bounds contain bounds in existing studies for exp-concave functions and strongly convex functions as a particular case ($k=0$).
Additionally, the new lower bounds generalize bounds in existing studies for convex functions, exp-concave functions, and strongly convex functions.
In cases where only gradient information is available, there is a multiplicative gap of $O(\sqrt{\log T})$ between the second terms of the upper bounds and the lower bounds.
This gap is eliminated when the information of the class of the previous objective function is available.

To prove regret lower bounds in Table~\ref{table},
we construct distributions of problem instances of contaminated OCO
for which any algorithm suffers a certain amount of regret in expected values.
Such distributions are constructed by combining suitably designed problem instances of $\mathcal{F}$-OCO and general OCO.

To derive novel regret upper bounds without additional information in Table~\ref{table},
we exploit regret upper bounds expressed using some problem-dependent values such as a measure of variance \citep{erven16}.
By combining such regret upper bounds and
inequalities derived from the definition of $k$-contaminated OCO,
we obtain regret upper bounds, including the regret itself,
which can be interpreted as quadratic inequalities in regret.
Solving these inequalities leads to regret upper bounds in Table~\ref{table}.

We develop algorithms that can achieve optimal regret upper bounds, taking into account the function class information of the previous function. 
To accomplish this, we modified two existing OCO algorithms: the Online Newton Step (ONS), as introduced by \citet{hazan06}, and the Online Gradient Descent (OGD), presented by \citet{zinkevich03}.
The modification is changing the update process depending on the function class of the last revealed objective function.

\section{Related Work}\label{related}

In the context of online learning,
\textit{adaptive} algorithms \citep{orabona2019modern} have been extensively studied due to their practical usefulness.
These algorithms work well by automatically exploiting the intrinsic properties of the sequence of objective functions and do not require parameter tuning based on prior knowledge of the objective function.
For example,
AdaGrad \citep{mcmahan2010adaptive,duchi2011adaptive} is probably one of the best-known adaptive algorithms,
which automatically adapts to the magnitude of the gradients.
Studies on \textit{universal} algorithms \citep{hazan2007adaptive,erven16,wang20,zhang22,yan2024universal},
which work well for several different {function classes},
can also be positioned within these research trends.
Our study shows that some of these universal algorithms have further adaptability,
i.e.,
nearly tight regret bounds for contaminated settings.

\citet{van2021robust} has explored a similar setting to ours, focusing on robustness to \textit{outliers}.
They regard rounds with larger gradient norms than some threshold as outliers and denote the number of outliers as $k$, whose definition differs from ours.
They have defined regret only for rounds that are not outliers, terming it \textit{robust regret}, and have shown that the additional $O(k)$ term is inevitable in bounds on robust regret.

Studies on best-of-both-worlds (BOBW) bandit algorithms \citep{bubeck2012best} and on stochastic bandits with adversarial corruptions \citep{lykouris2018stochastic,gupta2019better} are also related to our study.
BOBW algorithms are designed to achieve (nearly) optimal performance both for stochastic and adversarial environments,
e.g.,
$O(\log T)$-regret for stochastic and $O(\sqrt{T})$-regret for adversarial environments,
respectively.
Stochastic bandits with adversarial corruptions are problems for intermediate environments between stochastic and adversarial ones,
in which the magnitude of adversarial components is measured by means of the \textit{corruption level} parameter $C \geq 0$.
A BOBW algorithm by \citet{bubeck2012best} has shown to have a regret bound of $O(\log T + \sqrt{C \log T})$ as well for stochastic environments with adversarial corruptions,
which is also nearly tight \citep{ito2021optimal}.
In the proof of such an upper bound,
an approach referred to as the \textit{self-bounding technique} \citep{gaillard14,wei2018more} is used,
which leads to improved guarantees via some regret upper bounds that include the regret itself.
Similar proof techniques are used in our study as well.

\section{Problem Setting}\label{problemsetting}

In this section, we explain the problem setting we consider.
Throughout this paper, we assume functions $f_1,f_2,\ldots,f_T$ are differentiable and convex. 

\subsection{OCO Framework and Assumptions}

First of all, the mathematical formulation of OCO is as follows.
At each time step $t\in[T](\coloneqq\{1,2,\ldots,T\})$, a convex nonempty set $\mathcal X\subset\R^d$ and convex objective functions $f_1,f_2,\ldots,f_{t-1}\colon\mathcal X\to\R$ are known and $f_t$ is not known.
A learner chooses an action $\bm x_t\in\mathcal X$ and incurs a loss $f_t(\bm x_t)$ after the choice.
Since $f_t$ is unknown when choosing $\bm x_t$, it is impossible to minimize the cumulative loss $\sum_{t=1}^Tf_t(\bm x_t)$ for all sequences of $f_t$.
Instead, the goal of OCO is to minimize regret:
\[R_T:=\sum_{t=1}^Tf_t(\bm x_t)-\min_{\bm x\in\mathcal X}\sum_{t=1}^Tf_t(\bm x).\]
Regret is the difference between the cumulative loss of the learner and that of the best choice in hindsight.
The regret can be logarithmic if the objective functions are $\lambda$-strongly convex, i.e., $f(\bm y)\geq f(\bm x)+\pro{\nabla f(\bm x)}{\bm y-\bm x}+\frac\lambda2\|\bm x-\bm y\|^2$ for all $\bm x,\bm y\in\mathcal X$, or $\alpha$-exp-concave, i.e., $\exp(-\alpha f(\bm x))$ is concave on $\mathcal X$.

\begin{remark}
    The type of information about $f_t$ that needs to be accessed varies depending on the algorithm. 
    Universal algorithms only utilize gradient information, while the algorithm presented in Section~\ref{sec:hybrid} requires additional information besides the gradient, such as strong convexity and exp-concavity. 
    The lower bounds discussed in Section~\ref{lowerbounds} are applicable to arbitrary algorithms with complete access to full information about the objective functions.
\end{remark}

Next, we introduce the following two assumptions.
These assumptions are very standard in OCO and frequently used in regret analysis.
We assume them throughout this paper without mentioning them.

\begin{assumption}
There exists a constant $D>0$ such that $\|\bm x-\bm y\|\leq D$ holds for all $\bm x,\bm y\in\mathcal X$. 
\end{assumption}

\begin{assumption}
There exists a constant $G>0$ such that $\|\nabla f_t(\bm x)\|\leq G$ holds for all $\bm x\in\mathcal X$ and $t\in[T]$.
\end{assumption}

These assumptions are important, not only because we can bound $\|\bm x-\bm y\|$ and $\|\nabla f_t(\bm x)\|$, but also because we can use the following two lemmas:

\begin{lemma}
    \citep{hazan16}\label{ecineq}
    Let $f\colon\mathcal X\to\R$ be an $\alpha$-exp-concave function.
    Assume that there exist constants $D,G>0$ such that $\|\bm x-\bm y\|\leq D$ and $\|\nabla f(\bm x)\|\leq G$ hold for all $\bm x,\bm y\in\mathcal X$.
    The following holds for all $\gamma\leq(1/2)\min\{1/(GD),\alpha\}$ and all $\bm x,\bm y\in\mathcal X$:
    \[f(\bm x)\geq f(\bm y)+\pro{\nabla f(\bm y)}{\bm x-\bm y}+\frac\gamma2(\pro{\nabla f(\bm y)}{\bm x-\bm y})^2.\] 
\end{lemma}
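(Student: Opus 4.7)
The natural route is the defining property of $\alpha$-exp-concavity, namely that $\exp(-\alpha f)$ is concave on $\mathcal X$. The hypothesis $2\gamma\leq\alpha$ first upgrades this to $2\gamma$-exp-concavity of $f$: since $\exp(-2\gamma f)=[\exp(-\alpha f)]^{2\gamma/\alpha}$ with exponent in $(0,1]$, and $t\mapsto t^{2\gamma/\alpha}$ is concave and increasing on $(0,\infty)$, its composition with the concave positive function $\exp(-\alpha f)$ remains concave.

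The main step is to apply concavity of $g\coloneqq\exp(-2\gamma f)$ at the anchor $\bm y$:
\[
g(\bm x)\leq g(\bm y)+\pro{\nabla g(\bm y)}{\bm x-\bm y}=g(\bm y)\bigl(1-2\gamma\pro{\nabla f(\bm y)}{\bm x-\bm y}\bigr),
\]
where the equality uses the chain-rule identity $\nabla g(\bm y)=-2\gamma\, g(\bm y)\nabla f(\bm y)$. Writing $z\coloneqq\pro{\nabla f(\bm y)}{\bm x-\bm y}$, positivity of the left-hand side forces $1-2\gamma z>0$; dividing by $g(\bm y)$ and taking logarithms then yields $f(\bm x)\geq f(\bm y)-\tfrac{1}{2\gamma}\log(1-2\gamma z)$.

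To finish, I would invoke the elementary scalar inequality $-\log(1-u)\geq u+u^2/4$ valid for $u\in(-1,1)$. This can be proved by studying $h(u)\coloneqq-\log(1-u)-u-u^2/4$, whose derivative $h'(u)=u(1+u)/(2(1-u))$ exhibits a unique critical point on $(-1,1)$ at $u=0$ with $h(0)=0$, so $h\geq0$ throughout. Substituting $u=2\gamma z$, whose modulus is at most $2\gamma GD\leq1$ by the hypothesis $\gamma\leq 1/(2GD)$ together with $\|\nabla f(\bm y)\|\cdot\|\bm x-\bm y\|\leq GD$, delivers precisely the claimed bound $f(\bm x)\geq f(\bm y)+z+\tfrac{\gamma}{2}z^2$.

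The principal subtlety is the scalar inequality at negative arguments. A naive Taylor-series comparison yields the sharper bound $-\log(1-u)\geq u+u^2/2$ only for $u\in[0,1)$, whereas on $(-1,0)$ the alternating signs in the expansion force a weaker quadratic coefficient, and $1/4$ is exactly what survives. This asymmetry is the root cause of the factor $\gamma/2$ (rather than $\gamma$) in the conclusion; the monotonicity argument for $h$ handles both signs uniformly and bypasses the case analysis.
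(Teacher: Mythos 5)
Your proof is correct and follows essentially the same route as the standard argument in the cited reference \citep{hazan16}: upgrade to $2\gamma$-exp-concavity via the hypothesis $2\gamma\leq\alpha$, apply the gradient inequality for the concave function $\exp(-2\gamma f)$ at $\bm y$, and finish with the scalar bound $-\log(1-u)\geq u+u^2/4$ for $|u|\leq 1$, which is where $\gamma\leq 1/(2GD)$ enters. The paper itself only cites this lemma without reproducing a proof, and your treatment of the negative-$u$ regime (where the coefficient $1/4$ rather than $1/2$ is what survives) is the one genuinely delicate point and is handled correctly.
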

    
\begin{lemma}\citep{hazan16}\label{inclusion}
    If $f\colon\mathcal X\to\R$ is a twice differentiable $\lambda$-strongly convex function satisfying $\|\nabla f(\bm x)\|\leq G$ for all $\bm x\in\mathcal X$, then it is $\lambda/G^2$-exp-concave.
\end{lemma}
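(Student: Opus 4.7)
The plan is to translate exp-concavity of a twice differentiable function into a pointwise condition on its Hessian and gradient, and then to verify that condition using the strong convexity lower bound on the Hessian together with the gradient norm bound.

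First I would record the basic equivalence: a twice differentiable function $f$ is $\alpha$-exp-concave on $\mathcal X$ if and only if $g \coloneqq \exp(-\alpha f)$ has negative semidefinite Hessian at every $\bm x \in \mathcal X$. A direct differentiation gives
\[
\nabla g(\bm x) = -\alpha \exp(-\alpha f(\bm x))\,\nabla f(\bm x),
\]
\[
\nabla^2 g(\bm x) = \alpha \exp(-\alpha f(\bm x))\bigl(\alpha\, \nabla f(\bm x)\nabla f(\bm x)^\top - \nabla^2 f(\bm x)\bigr).
\]
Since the scalar factor $\alpha \exp(-\alpha f(\bm x))$ is positive, $\nabla^2 g \preceq 0$ is equivalent to
\[
\nabla^2 f(\bm x) \succeq \alpha\, \nabla f(\bm x)\nabla f(\bm x)^\top.
\]

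Next I would verify this matrix inequality with $\alpha = \lambda/G^2$. The gradient bound $\|\nabla f(\bm x)\| \leq G$ yields, for any vector $\bm v$,
\[
\bm v^\top \nabla f(\bm x)\nabla f(\bm x)^\top \bm v = \bigl(\pro{\nabla f(\bm x)}{\bm v}\bigr)^2 \leq \|\nabla f(\bm x)\|^2 \|\bm v\|^2 \leq G^2 \|\bm v\|^2,
\]
so $\nabla f(\bm x)\nabla f(\bm x)^\top \preceq G^2 I$ and hence $\alpha\, \nabla f(\bm x)\nabla f(\bm x)^\top \preceq \lambda I$. On the other hand, $\lambda$-strong convexity combined with twice differentiability gives $\nabla^2 f(\bm x) \succeq \lambda I$. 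Chaining these two inequalities produces
\[
\nabla^2 f(\bm x) \succeq \lambda I \succeq \alpha\, \nabla f(\bm x)\nabla f(\bm x)^\top,
\]
which is exactly the desired pointwise condition, so $\exp(-\alpha f)$ is concave and $f$ is $(\lambda/G^2)$-exp-concave.

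The proof is essentially a Hessian calculation, so there is no genuine obstacle; the only point worth being careful about is the direction of the equivalence between exp-concavity and the Hessian condition (making sure the sign from the chain rule is handled correctly) and noting that the twice-differentiability assumption is what allows us to pass from the analytic definition of concavity of $\exp(-\alpha f)$ to the algebraic semidefinite inequality above.
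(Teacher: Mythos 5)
Your proof is correct and follows the standard argument from the cited reference: it reduces exp-concavity to the Hessian criterion $\nabla^2 f(\bm x)\succeq\alpha\nabla f(\bm x)\nabla f(\bm x)^\top$ (recorded in this paper as Lemma~\ref{concave}) and then chains $\nabla^2 f\succeq\lambda I\succeq(\lambda/G^2)\nabla f\nabla f^\top$. The paper itself only cites \citet{hazan16} for this lemma rather than reproving it, and your argument is exactly the proof given there.
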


Lemma~\ref{inclusion} means that exp-concavity is a milder condition than strong convexity, combining with the fact that $-\log\pro{\bm a}{\bm x}$ is not strongly convex but 1-exp-concave.

\subsection{Contaminated Case}

In this subsection, we define \textit{contaminated} OCO and introduce examples that belong to this problem class.
The definition is below.

\begin{definition}
For some function class $\mathcal F$, a sequence of convex functions $(f_1,f_2,\ldots,f_T)$ belongs to $k$\textit{-contaminated} $\mathcal F$ if there exists a set of indices $I\subset[T]$ such that $|I|=k$ and $f_t\in\mathcal F$ holds for all $t\in[T]\backslash I$.
\end{definition}

For example, if functions other than $k$ functions of them are $\alpha$-exp-concave, we call the functions $k$-contaminated $\alpha$-exp-concave.
And especially for OCO problems, if the objective functions are contaminated, we call them contaminated OCO.

The following two examples are functions whose function class varies with time step.
These examples motivate this study.

\begin{example}\label{ex:leastsquare}
(Online least mean square regressions)
Consider the situation where a batch of input-output data $(\bm a_{t,i},b_{t,i})\in\R^d\times\R$ $(i\in\{1,2,\ldots,n\})$ is given in each round $t$ and we want to estimate $\bm x$ which enable to predict $b\approx\pro{\bm a}{\bm x}$.
This can be regarded as an OCO problem whose objective functions are $f_t(\bm x):=(1/n)\sum_{i=1}^n(\pro{\bm a_{t,i}}{\bm x}-b_{t,i})^2$.
These functions are $\lambda_t$-strongly convex, where $\lambda_t$ is the minimum eigenvalue of the matrix $(2/n)\sum_{i=1}^n\bm a_{t,i}\bm a_{t,i}^\top$.
Let $k(\lambda) := | \{ t \in [T] \mid \lambda_t < \lambda \}  |$ for any $\lambda>0$. 
Then $(f_1,f_2,\ldots,f_T)$ is $k(\lambda)$-contaminated $\lambda$-strongly convex.
\end{example}

\begin{example}\label{logistic}
(Online classification by using logistic regression)
Consider the online classification problem.
A batch of input-output data $(\bm a_{t,i},b_{t,i})\in\R^d\times\{ \pm1 \}$ $(i\in\{1,2,\ldots,n\})$ is given in each round $t$ and we want to estimate $\bm x$ which enable to predict $b=\mathrm{sgn}(\pro{\bm a}{\bm x})$.
Suppose that the objective functions are given by $f_t(\bm x):=(1/n)\sum_{i=1}^n\log(1+\exp(-b_{t,i}\pro{\bm a_{t,i}}{\bm x}))$.
Exp-concavity of $f_t(\bm x)$ on $\{\bm x\in\R^d~|~\|\bm x\|\leq1\}$ changes with time step.
Especially, in the case $\bm a_{t,i}=\bm a_t,~b_{t,i}=b_t$, $f_t$ is $\exp(-\|\bm a_t\|)$-exp-concave, as proved in Appendix~\ref{sec:exproof}.
Let $k(\alpha) := | \{ t \in [T] \mid \alpha_t < \alpha \}  |$ for any $\alpha>0$, where $\alpha_t$ is defined so that $f_t$ is $\alpha_t$-exp-concave. 
Then $(f_1,f_2,\ldots,f_T)$ is $k(\alpha)$-contaminated $\alpha$-exp-concave.
\end{example}

\begin{remark}
In the two examples above, constants $\lambda$ and $\alpha$ in the definition of $\lambda$-strong convexity and $\alpha$-exp-concavity can be strictly positive for all time steps.
However, since the regret bounds are $O((1/\lambda)\log T)$ and $O((d/\alpha)\log T)$ for $\lambda$-strongly convex functions and $\alpha$-exp-concave functions respectively, if $\lambda$ and $\alpha$ are $O(1/T)$, then the regret bounds become trivial.
Analyses in this paper give a nontrivial regret bound to such a case.
\end{remark}

\section{Regret Lower Bounds}
\subsection{Vulnerability of ONS}\label{sec:ons}

This subsection explains how Online Newton Step (ONS) works for contaminated exp-concave functions.
ONS is an algorithm for online exp-concave learning.
Details of ONS are in Appendix~\ref{onsalg}.
The upper bound is as follows.

\begin{proposition}\label{onsupper}
If a sequence of objective functions $(f_1,f_2,\ldots,f_T)$ is $k$-contaminated $\alpha$-exp-concave, 
the regret upper bound of ONS with $\gamma=(1/2)\min\{1/(GD),\alpha\}$ and $\varepsilon=1/(\gamma^2 D^2)$ is
$O((d/\gamma)\log T+k)$.
\end{proposition}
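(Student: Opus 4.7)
The plan is to follow the standard ONS regret analysis but track separately the contribution of the $k$ contaminated rounds, where we only have convexity (not exp-concavity) at our disposal.

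First I would decompose the regret. Let $\bm x^\star \in \argmin_{\bm x \in \mathcal X}\sum_t f_t(\bm x)$ and let $I \subset [T]$ with $|I|=k$ be the set of contaminated indices. For $t \notin I$, Lemma~\ref{ecineq} (applied with our choice $\gamma \leq (1/2)\min\{1/(GD),\alpha\}$) gives
\[
f_t(\bm x_t)-f_t(\bm x^\star)\;\leq\;\pro{\nabla f_t(\bm x_t)}{\bm x_t-\bm x^\star}-\frac{\gamma}{2}\bigl(\pro{\nabla f_t(\bm x_t)}{\bm x_t-\bm x^\star}\bigr)^2,
\]
while for $t \in I$ only first-order convexity is available, giving $f_t(\bm x_t)-f_t(\bm x^\star)\leq \pro{\nabla f_t(\bm x_t)}{\bm x_t-\bm x^\star}$. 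Summing and writing $\nabla_t := \nabla f_t(\bm x_t)$, I obtain
\[
R_T\;\leq\;\sum_{t=1}^T\pro{\nabla_t}{\bm x_t-\bm x^\star}-\frac{\gamma}{2}\sum_{t\notin I}\bigl(\pro{\nabla_t}{\bm x_t-\bm x^\star}\bigr)^2.
\]

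Next I would add and subtract the missing quadratic terms so that the standard ONS potential argument applies to the full sum over $t\in[T]$:
\[
R_T\;\leq\;\Biggl[\sum_{t=1}^T\pro{\nabla_t}{\bm x_t-\bm x^\star}-\frac{\gamma}{2}\sum_{t=1}^T\bigl(\pro{\nabla_t}{\bm x_t-\bm x^\star}\bigr)^2\Biggr]+\frac{\gamma}{2}\sum_{t\in I}\bigl(\pro{\nabla_t}{\bm x_t-\bm x^\star}\bigr)^2.
\]
The bracketed expression is exactly what the standard ONS analysis (via the matrix potential $A_t=\varepsilon I+\sum_{s\leq t}\nabla_s\nabla_s^\top$ and the log-determinant inequality $\sum_t \nabla_t^\top A_t^{-1}\nabla_t \leq d\log(1+TG^2/(d\varepsilon))$) bounds by $O((d/\gamma)\log T)$ under the choice $\varepsilon = 1/(\gamma^2 D^2)$; I would cite the ONS analysis recalled in Appendix~\ref{onsalg} rather than redo it.

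For the remaining contamination term, Cauchy--Schwarz combined with the boundedness assumptions yields $|\pro{\nabla_t}{\bm x_t-\bm x^\star}| \leq GD$, so
\[
\frac{\gamma}{2}\sum_{t\in I}\bigl(\pro{\nabla_t}{\bm x_t-\bm x^\star}\bigr)^2\;\leq\;\frac{\gamma}{2}\,k\,(GD)^2\;\leq\;\frac{GD}{4}\,k\;=\;O(k),
\]
where the second inequality uses $\gamma\leq 1/(2GD)$. Combining the two estimates delivers the claimed bound $R_T = O((d/\gamma)\log T + k)$.

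The only nontrivial step is recognizing that inserting the missing $-\frac{\gamma}{2}\sum_{t\in I}(\cdot)^2$ terms costs only $O(k)$, which is what keeps the contamination contribution additive (rather than, say, multiplying the logarithmic term). Everything else is a straightforward bookkeeping modification of the classical ONS argument.
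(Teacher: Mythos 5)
Your proposal is correct and follows essentially the same route as the paper's proof: apply Lemma~\ref{ecineq} on the uncontaminated rounds, insert the missing quadratic terms for the contaminated rounds at a cost of $\frac{\gamma}{2}k(GD)^2 \leq \frac{1}{4}kGD$, and invoke the standard ONS potential bound of $O((d/\gamma)\log T)$ for the remainder. The only cosmetic difference is that the paper phrases the correction term as $\sum_t \frac{\gamma-\gamma_t}{2}(\cdot)^2$ with $\gamma_t=0$ on contaminated rounds, which reduces to exactly your decomposition.
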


This proposition is proved by using the proof for noncontaminated cases by \citet{hazan16}.
A detailed proof is in Appendix \ref{onsupperproof}.
This upper bound seems trivial, but the bound is tight because of the lower bound stated in Corollary \ref{onslower}.

Before stating the lower bound, we introduce the following theorem, which is essential in deriving some lower bounds of contaminated cases.

\begin{theorem}\label{lower}
Let $\mathcal F$ be an arbitrary function class.
Suppose that functions $g_1,g_2$ are the functions such that $\Omega(g_1(T))$ and $\Omega(g_2(T))$ are lower bounds for function class $\mathcal F$ and convex functions, respectively, for some OCO algorithm.
If a sequence of objective functions belongs to $k$-contaminated $\mathcal F$, then regret in the worst case is $\Omega(g_1(T)+g_2(k))$ for the OCO algorithms.
\end{theorem}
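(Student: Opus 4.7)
The plan is to exhibit, inside the class of $k$-contaminated $\mathcal F$ instances of length $T$, two separate hard families: one forcing $\Omega(g_1(T))$ regret and one forcing $\Omega(g_2(k))$ regret. The theorem then follows from taking the worst case over these families together with the elementary bound $\max(a,b)\ge (a+b)/2$.

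For the first term I would invoke the hypothesis directly: there exists a sequence $(f_1,\ldots,f_T)$ with every $f_t\in\mathcal F$ that forces regret $\Omega(g_1(T))$ on the algorithm under consideration. Such a sequence contains no contaminated rounds, hence is a valid member of $k$-contaminated $\mathcal F$ for \emph{every} $k\ge 0$, so the worst-case regret on $k$-contaminated $\mathcal F$ inherits the $\Omega(g_1(T))$ bound for free. For the second term I would take the hard convex instance $(h_1,\ldots,h_k)$ that forces regret $\Omega(g_2(k))$ and pad it to length $T$ with $T-k$ functions chosen from $\mathcal F$. These padding functions must (i)~lie in $\mathcal F$, (ii)~not leak information useful on the hard rounds, and (iii)~not shift the hindsight minimum in a way that cancels the hard-block regret. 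The cleanest route is Yao's principle: randomize the hard block (e.g.\ random $\pm 1$ signs as in the standard convex-OCO lower bound) and use a single deterministic padding function $f^\star\in\mathcal F$ whose minimizer is invariant under the randomization, or aligned with the minimizer of $\sum_t h_t$ in expectation. Then the padding block adds only a non-negative loss to the algorithm plus a controllable shift of the benchmark, so the expected regret on the hard block survives as the dominant $\Omega(g_2(k))$ contribution.

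The step I expect to be the main obstacle is exactly this padding argument. Because $\mathcal F$ appears abstractly in the statement, the proof needs to produce a padding function $f^\star\in\mathcal F$ with the right invariance properties, which requires a small amount of class-specific work. For the concrete classes used later in the paper, namely $\lambda$-strongly convex and $\alpha$-exp-concave functions on a symmetric domain, a quadratic $f^\star(\bm x)=(\lambda/2)\|\bm x\|^2$ works once the random hard instance is symmetrized around the origin, which is automatic for the standard sign-flip construction. With the padding in place, both families sit inside $k$-contaminated $\mathcal F$, and the worst-case regret is at least $\max\{\Omega(g_1(T)),\Omega(g_2(k))\}=\Omega(g_1(T)+g_2(k))$, as desired.
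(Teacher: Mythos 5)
Your proposal is correct and follows essentially the same route as the paper: the paper likewise uses the unpadded hard instance for $\mathcal F$ to get the $\Omega(g_1(T))$ term, pads the length-$k$ hard convex instance with $T-k$ functions from $\mathcal F$ whose minimizer matches that of the hard block, and combines the two via a probability-$1/2$ mixture, which is the same worst-case-exceeds-average argument as your $\max(a,b)\ge(a+b)/2$. If anything, you are more explicit than the paper about the one delicate point, namely that a suitable padding function in the abstract class $\mathcal F$ must actually exist; the paper simply asserts it.
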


\begin{remark}
    In Theorem~\ref{lower}, if the lower bounds $\Omega(g_1(T))$ and $\Omega(g_2(T))$ are for all OCO algorithms, then the lower bound $\Omega(g_1(T)+g_2(k))$ is also for all OCO algorithms.
\end{remark}

To derive this lower bound, we use the following two instances; one is the instance used to prove lower bound $R_T=\Omega(g_1(T))$ for function class $\mathcal F$, and the other is the instance used to prove $R_k=\Omega(g_2(k))$ for convex objective functions.
By considering the instance that these instances realize with probability $1/2$, we can construct an instance that satisfies
\[\mathbb E[R_T]=\Omega(g_1(T)+g_2(k)),\]
for all OCO algorithms.
A detailed proof of this proposition is postponed to Appendix \ref{lowerproof}.

Theorem \ref{lower} implies that, in contaminated cases, we can derive interpolating lower bounds of regret.
The lower bound obtained from this theorem is $\Omega(g_1(T))$ if $k\ll T$, and $\Omega(g_2(T))$ if $k=T$. 
Since the contaminated case can be interpreted as an intermediate regime between restricted $\mathcal{F}$-OCO and general OCO, this result would seen as reasonable.
This lower bound applies not only to ONS but also to arbitrary algorithms.

To apply Theorem \ref{lower} to ONS, we show the following lower bound in the case of convex functions.
This lower bound shows that ONS is not suitable for convex objective functions.

\begin{proposition}\label{onslowerconvex}
For any positive parameters $\gamma$ and $\varepsilon$, ONS incurs $\Omega(T)$ regret in the worst case.
\end{proposition}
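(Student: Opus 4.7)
The plan is to construct an explicit high-dimensional OCO instance on which ONS incurs $\Omega(T)$ regret regardless of the parameters $\gamma,\varepsilon>0$. Specifically, I will take dimension $d=T$, decision set $\mathcal X=[-1,1]^T$, initial action $\bm x_1=\bm 0$, and in round $t$ reveal the convex linear objective $f_t(\bm x)=G\,x_t$, whose gradient is $g_t=G e_t$ with $\|g_t\|=G$. These functions are convex and satisfy the Lipschitz assumption, so the sequence is a legitimate instance of convex OCO.

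The crucial observation is that the gradients $\{g_t\}_{t=1}^T$ are coordinate-aligned and hence orthogonal, so the ONS preconditioner
\[
A_s \;=\; \sum_{\tau\leq s} g_\tau g_\tau^\top + \varepsilon I \;=\; G^2\sum_{\tau\leq s} e_\tau e_\tau^\top + \varepsilon I
\]
is diagonal for every $s$. The ONS update thus reduces to $\bm x_{s+1}=\Pi^{A_s}_{\mathcal X}\!\bigl(\bm x_s-\tfrac{G}{\gamma(G^2+\varepsilon)}\,e_s\bigr)$, which modifies only the $s$-th coordinate of $\bm x_s$ (since both $\mathcal X$ and the $A_s$-norm are coordinate-separable, the projection is just coordinatewise clipping to $[-1,1]$). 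A straightforward induction on $t$ then shows that at the start of round $t$ the coordinates $t,t+1,\dots,T$ of $\bm x_t$ still equal their initial value $0$; in particular $x_{t,t}=0$, so the loss in round $t$ is $f_t(\bm x_t)=G\cdot x_{t,t}=0$.

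Consequently ONS incurs total loss $0$, whereas the fixed comparator $\bm x^\star=-\bm 1\in\mathcal X$ achieves $\sum_{t=1}^T f_t(\bm x^\star)=-GT$, and therefore
\[
R_T \;=\; 0-(-GT) \;=\; GT \;=\; \Omega(T),
\]
which is the claim. The only delicate point is the induction establishing the ``untouched coordinate'' property, and this is precisely where the orthogonality of the gradient sequence exploits ONS's built-in assumption that accumulated gradient information ought to shrink the effective step size: in each freshly probed direction the learner has already committed to a suboptimal value and will never revisit it. The construction lets the dimension $d$ scale with $T$, which is consistent with the explicit role of $d$ in the paper's other regret bounds and is permissible because the lower bound quantifies over all OCO instances.
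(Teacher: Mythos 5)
Your construction is mechanically sound as far as it goes: the gradients $Ge_t$ are orthogonal, $\bm A_t$ stays diagonal, the generalized projection onto the box is coordinatewise clipping, and the induction showing $x_{t,t}=0$ at round $t$ is correct. The gap is in the scaling. The paper's standing assumptions (Assumptions 3.2 and 3.3) require a \emph{constant} diameter bound $D$, and the hidden constants in all the $\Omega(\cdot)$ and $O(\cdot)$ statements are allowed to depend on $G$, $D$, and $d$ but not on $T$. Your decision set $[-1,1]^T$ has Euclidean diameter $2\sqrt{T}$, so the instance family violates the bounded-diameter assumption. If you rescale to respect it --- say $\mathcal X=[-D/(2\sqrt T),D/(2\sqrt T)]^T$ --- the comparator's total loss becomes $-GD\sqrt T/2$ and your lower bound degrades to $\Omega(GD\sqrt T)$, which is exactly the minimax rate for convex OCO that \emph{every} algorithm suffers. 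That does not witness any vulnerability of ONS, and it is too weak for the downstream use of this proposition: Corollary~\ref{onslower} needs $\Omega(k)$ regret (linear in the number of contaminated rounds, with $G$, $D$, $d$ fixed) to show that ONS's $O((d/\gamma)\log T+k)$ upper bound is tight and strictly worse than the $\sqrt{k\log T}$-type bounds of universal algorithms.

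The paper's proof is genuinely different and avoids this issue: it works in a fixed one-dimensional set $\mathcal X=[-D/2,D/2]$ with losses $f_t(x)=v_tx$, $v_t\in\{\pm G\}$. The adversary first alternates signs so that ONS's effective step size $\gamma^{-1}(\varepsilon+G^2t)^{-1}$ decays like $1/t$ while the iterate hovers near the center, and then commits to a single sign for the last $\Theta(T)$ rounds; because the remaining step sizes sum to $O(D)$, the iterate cannot travel the constant distance to the optimal endpoint, losing $\Theta(GD)$ per round and hence $\Omega(T)$ overall with constant $G$ and $D$. To repair your argument you would need a construction of this reuse-the-same-direction type that exploits ONS's decaying step size, rather than one that spends a fresh orthogonal direction each round.
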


To prove this proposition, consider the instance as follows:
\[f_t(x)=v_tx,~x\in\mathcal X=[-D/2,D/2],~x_1=-G,\]
where
\[v_t=\begin{dcases}
(-1)^tG&t<t_1,\\
G&t\geq t_1,~x_{t_1}\geq0,\\
-G&t\geq t_1,~x_{t_1}<0,
\end{dcases}\]
and $t_1$ is a minimum natural number which satisfies $t_1\geq(1+\gamma G^2D/2)^{-1}T$. 
Then, we can get
\[R_T\geq \frac{\gamma G^2D/2}{2(1+\gamma G^2D/2)^2}T-\frac1\gamma\log\left(1+\frac{G^2}{\varepsilon}T\right)-\frac2{\gamma G}-\frac{G^2D}2.\]
A detailed proof of this proposition is postponed to Appendix \ref{onslowerconvexproof}.

\begin{remark}
    Corollary~\ref{onslowerconvex} states the lower bound that holds only for ONS. 
    However, if some better algorithms are used, the lower bound can be improved. 
    Therefore, it is not a contradiction that the general lower bound in Table~\ref{table} is better than that of ONS.
    This is also true for Corollary~\ref{onslower}, which is about the contaminated case.
\end{remark}

The lower bound of $\alpha$-exp-concave functions can be derived as follows.
The lower bound of $1$-exp-concave functions is $\Omega(d\log T)$ \citep{ordentlich98}.
Here, when divided by $\alpha$, $1$-exp-concave functions turn into $\alpha$-exp-concave functions, and regret is also divided by $\alpha$.
Hence, the lower bound of $\alpha$-exp-concave functions is $\Omega((d/\alpha)\log T)$.

We get the following from this lower bound for exp-concave functions, Proposition \ref{onslowerconvex}, and Theorem \ref{lower}.

\begin{corollary}\label{onslower}
If a sequence of objective functions $(f_1,f_2,\ldots,f_T)$ is $k$-contaminated $\alpha$-exp-concave, regret in worst case is $\Omega((d/\alpha)\log T+k)$, for ONS.
\end{corollary}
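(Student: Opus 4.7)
The plan is to obtain this corollary as a direct specialization of Theorem~\ref{lower}, with $\mathcal{F}$ taken to be the class of $\alpha$-exp-concave functions and the OCO algorithm in question being ONS itself. Theorem~\ref{lower} requires two ingredients: a lower bound $\Omega(g_1(T))$ against ONS when the objectives lie in $\mathcal{F}$, and a lower bound $\Omega(g_2(T))$ against ONS on arbitrary convex objectives. Once both are available, the theorem automatically produces the interpolated bound $\Omega(g_1(T) + g_2(k))$ on $k$-contaminated instances.

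First I would read off $g_1(T) = (d/\alpha)\log T$ from the lower bound of \citet{ordentlich98} for exp-concave OCO, which the excerpt has just recalled by rescaling the $1$-exp-concave bound $\Omega(d \log T)$. Crucially, this lower bound applies to every OCO algorithm, hence in particular to ONS, so it qualifies as a lower bound on ONS's regret over $\mathcal{F}$. Next I would take $g_2(T) = T$, which is exactly the content of Proposition~\ref{onslowerconvex}: ONS run with any fixed parameters $\gamma,\varepsilon$ incurs $\Omega(T)$ regret on the linear instance $f_t(x)=v_tx$ constructed there. Thus both hypotheses of Theorem~\ref{lower} are met for the algorithm ONS.

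Applying Theorem~\ref{lower} then yields a worst-case regret of $\Omega\bigl(g_1(T) + g_2(k)\bigr) = \Omega\bigl((d/\alpha)\log T + k\bigr)$ on $k$-contaminated $\alpha$-exp-concave sequences, which is precisely the claim. Concretely, the construction behind Theorem~\ref{lower} tosses a fair coin and presents either the Ordentlich--Cover exp-concave hard instance of length $T$ or the ONS-specific linear hard instance of length $k$ embedded within $T$ rounds (with the remaining rounds made $\alpha$-exp-concave), so that the resulting sequence is $k$-contaminated $\alpha$-exp-concave while in expectation ONS must pay a constant fraction of each of the two lower bounds.

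The only point that merits care, and which I regard as the main (minor) obstacle, is to verify that the ``for some OCO algorithm'' clause in Theorem~\ref{lower} is being instantiated consistently with the same algorithm in both hypotheses: the exp-concave lower bound is universal and hence applies to ONS, and Proposition~\ref{onslowerconvex} is stated specifically for ONS with arbitrary positive $\gamma,\varepsilon$. Since both lower bounds hold for the same fixed ONS instantiation, Theorem~\ref{lower} may be applied as stated, and no further calculation is required.
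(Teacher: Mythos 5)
Your proposal is correct and follows exactly the paper's own route: it instantiates Theorem~\ref{lower} with $g_1(T)=(d/\alpha)\log T$ from the (universal, hence ONS-applicable) exp-concave lower bound of \citet{ordentlich98} and $g_2(T)=T$ from Proposition~\ref{onslowerconvex}, yielding $\Omega((d/\alpha)\log T+k)$. Your added remark about instantiating both hypotheses with the same fixed algorithm is a sensible clarification but introduces nothing beyond what the paper already does.
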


This proposition shows that the regret analysis in Proposition \ref{onsupper} is strict.
While ONS does not work well for contaminated OCO, universal algorithms exhibit more robust performance.
In Section~\ref{upperbounds}, we analyze some universal algorithms on this point.

\begin{remark}
    For the 1-dimension instance above, ONS can also be regarded as OGD (Algorithm~\ref{alg:ogd} in Appendix~\ref{ogdalg}) with $\Theta(1/t)$ stepsize.
    This implies that we can show that OGD with $\Theta(1/t)$ stepsize can incur $\Omega(T)$ regret in the worst case.
    Therefore, for $k$-contaminated strongly convex functions, regret in worst case is $\Omega((1/\lambda)\log T+k)$, for OGD with $\Theta(1/t)$ stepsize.
\end{remark}
\subsection{General Lower Bounds}\label{lowerbounds}

In this subsection, we present regret lower bounds for arbitrary algorithms.

Using Theorem~\ref{lower}, we can get a lower bound of $k$-contaminated exp-concave functions.
As mentioned in Section \ref{sec:ons}, regret lower bound of $\alpha$-exp-concave functions is $\Omega((d/\alpha)\log T)$.
From this lower bound and that of convex functions is $\Omega(GD\sqrt T)$ \citep{abernethy08}, we can derive the following lower bound.
This corollary shows that $k$-contamination of exp-concave functions worsens regret lower bound at least $\Omega(GD\sqrt k)$. 

\begin{corollary}\label{eclower}
If $(f_1,f_2,\ldots,f_T)$ is $k$-contaminated $\alpha$-exp-concave, regret in worst case is $\Omega((d/\alpha)\log T+GD\sqrt{k})$, for all OCO algorithms.
\end{corollary}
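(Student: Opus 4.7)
The plan is to apply Theorem~\ref{lower} directly, using the remark following it which promotes the conclusion to hold for all OCO algorithms whenever the two input lower bounds do. So the task reduces to identifying the correct $g_1$ and $g_2$ and verifying that each is an ``all algorithms'' lower bound.

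First, I would take $\mathcal{F}$ to be the class of $\alpha$-exp-concave functions and set $g_1(T) = (d/\alpha)\log T$. This is exactly the standard minimax lower bound: \citet{ordentlich98} prove $\Omega(d \log T)$ for the $1$-exp-concave case (universal portfolios), and as noted in Section~\ref{sec:ons} one obtains the $\alpha$-exp-concave case by the simple rescaling $f \mapsto f/\alpha$, which turns a $1$-exp-concave function into an $\alpha$-exp-concave one and scales the regret by $1/\alpha$. Crucially, this bound holds against any learner, so it is an ``all OCO algorithms'' lower bound in the sense required by the remark.

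Next, I would take $g_2(T) = GD\sqrt{T}$ and invoke \citet{abernethy08}, who give a matching $\Omega(GD\sqrt{T})$ minimax lower bound for the class of convex, $G$-Lipschitz functions on a set of diameter $D$. This bound likewise holds for every algorithm. With both ingredients in place, Theorem~\ref{lower} immediately gives the claimed bound
\[
R_T \;=\; \Omega\bigl((d/\alpha)\log T + GD\sqrt{k}\bigr)
\]
for every OCO algorithm, which is exactly Corollary~\ref{eclower}.

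Since all the heavy lifting, namely the construction of a mixture instance that simultaneously forces $\Omega(g_1(T))$ regret on the $T-k$ ``clean'' rounds and $\Omega(g_2(k))$ regret on the $k$ ``contaminated'' rounds, is already encapsulated in Theorem~\ref{lower}, there is essentially no obstacle here beyond bookkeeping. The only point that needs a brief sentence of care is the scaling argument that turns Ordentlich's $1$-exp-concave bound into an $\alpha$-exp-concave bound without losing the dimension factor $d$ or the universality over algorithms; this follows because the rescaling $f \mapsto f/\alpha$ is a bijection between the two function classes and commutes with the learner's decisions.
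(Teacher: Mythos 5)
Your proposal is correct and follows exactly the paper's own route: the paper likewise instantiates Theorem~\ref{lower} with $g_1(T)=(d/\alpha)\log T$ obtained from the $\Omega(d\log T)$ bound of \citet{ordentlich98} via the $f\mapsto f/\alpha$ rescaling, and $g_2(T)=GD\sqrt{T}$ from \citet{abernethy08}, then uses the remark that the combined bound holds for all algorithms. No gaps.
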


According to \citet{abernethy08}, the regret lower bound in the case of $\lambda$-strongly convex functions is $\Omega((G^2/\lambda)\log T)$.
Therefore, following a similar corollary is derived in the same way.

\begin{corollary}\label{sclower}
If a sequence of objective functions $(f_1,f_2,\ldots,f_T)$ is $k$-contaminated $\lambda$-strongly convex, regret in worst case is $\Omega((G^2/\lambda)\log T+GD\sqrt k)$, for all OCO algorithms.
\end{corollary}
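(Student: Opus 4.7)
The plan is to invoke Theorem~\ref{lower} with $\mathcal{F}$ taken to be the class of $\lambda$-strongly convex functions. The theorem reduces the task to producing (i) a worst-case regret lower bound for OCO restricted to $\lambda$-strongly convex objectives, and (ii) a worst-case regret lower bound for unrestricted convex OCO, under the same diameter and gradient-norm constraints that come from Assumptions~1 and 2. Both ingredients are already classical.

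First, I would quote from \citet{abernethy08} the minimax lower bound $\Omega((G^2/\lambda)\log T)$ for OCO with $\lambda$-strongly convex losses whose gradients are bounded by $G$, and set $g_1(T)=(G^2/\lambda)\log T$. Next, from the same reference, I would quote the $\Omega(GD\sqrt{T})$ minimax lower bound for general convex OCO on a decision set of diameter $D$ with gradient bound $G$, and set $g_2(T)=GD\sqrt{T}$; in particular $g_2(k)=GD\sqrt{k}$. Because both underlying lower bounds are proved by explicit distributions over instances (and hence hold against any algorithm), the remark following Theorem~\ref{lower} applies and the resulting contaminated bound is also valid for arbitrary OCO algorithms.

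Finally, I would apply Theorem~\ref{lower} directly with these choices of $g_1$ and $g_2$, obtaining for the worst-case regret on $k$-contaminated $\lambda$-strongly convex instances
\[
R_T = \Omega\bigl(g_1(T) + g_2(k)\bigr) = \Omega\bigl((G^2/\lambda)\log T + GD\sqrt{k}\bigr),
\]
which is exactly the claim. No real obstacle arises: the hard work of constructing a distribution over contaminated instances that mixes a strongly convex hard instance (realized with probability $1/2$) with a general-convex hard instance on $k$ of the rounds is already encapsulated by Theorem~\ref{lower}. The only item to double-check is that the two building-block lower bounds can be taken with the same constants $D$ and $G$ appearing in the standing assumptions, but this is immediate because the instances used in \citet{abernethy08} respect those bounds (and can be rescaled if necessary). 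Hence Corollary~\ref{sclower} follows as a one-line application of Theorem~\ref{lower}, entirely parallel to the proof of Corollary~\ref{eclower}.
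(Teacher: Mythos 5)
Your proposal is correct and is exactly the paper's argument: both apply Theorem~\ref{lower} with $g_1(T)=(G^2/\lambda)\log T$ from the strongly convex lower bound of \citet{abernethy08} and $g_2(T)=GD\sqrt{T}$ from the convex lower bound in the same reference, then invoke the remark that the combined bound holds for all algorithms. Nothing is missing.
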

\section{Regret Upper Bounds by Universal Algorithms}\label{upperbounds}

In this section, we explain the regret upper bounds of some universal algorithms when the objective functions are contaminated.
The algorithms we analyze in this paper are multiple eta gradient algorithm (MetaGrad) \citep{erven16}, multiple sub-algorithms and learning rates (Maler) \citep{wang20}, and universal strategy for online convex optimization (USC) \citep{zhang22}.
Though there are two variations of MetaGrad; \textit{full} MetaGrad and \textit{diag} MetaGrad, in this paper, MetaGrad means full MetaGrad.
We denote 
$R_T^{\bm x}:=\sum_{t=1}^T(f_t(\bm x_t)-f_t(\bm x))$, 
$\tilde R_T^{\bm x}:=\sum_{t=1}^T\pro{\nabla f_t(\bm x_t)}{\bm x_t-\bm x}$, 
$V_T^{\bm x}:=\sum_{t=1}^T(\pro{\nabla f_t(\bm x_t)}{\bm x_t-\bm x})^2$ and
$W_T^{\bm x}:=G^2\sum_{t=1}^T\|\bm x_t-\bm x\|^2$.

Concerning MetaGrad and Maler, following regret bounds hold without assuming exp-concavity or strong convexity:

\begin{theorem}
    \citep{erven16}\label{erventhm1}
    For MetaGrad, $R_T^{\bm x}$ is simultaneously bounded by $O(GD\sqrt{T\log\log T})$, and by
    \[R_T^{\bm x}\leq\tilde R_T^{\bm x}=O(\sqrt{V_T^{\bm x}d\log T}+GDd\log T),\]
    for any $\bm x\in\mathcal X$.
\end{theorem}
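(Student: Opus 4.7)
The plan is to follow the two-level analysis of Van Erven and Koolen. For each learning rate $\eta$ on a geometric grid $\{\eta_k\}_{k=0}^K$ with $K=\lceil\tfrac12\log_2 T\rceil$ and $\eta_k=2^{-k}/(5GD)$, introduce the surrogate loss
$$\ell_t^\eta(\bm x)=-\eta\pro{\nabla f_t(\bm x_t)}{\bm x_t-\bm x}+\eta^2\bigl(\pro{\nabla f_t(\bm x_t)}{\bm x_t-\bm x}\bigr)^2.$$
MetaGrad runs one slave per $\eta$ producing iterates $\bm x_t^\eta$ together with an exponential-weights master that returns $\bm x_t$ as a tilted convex combination of the slaves. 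Since each $f_t$ is convex, $R_T^{\bm x}\leq\tilde R_T^{\bm x}$ for every $\bm x\in\mathcal X$, so it suffices to bound $\tilde R_T^{\bm x}$.

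For the per-slave step, each slave runs an ONS-style update on the quadratic $\ell_t^\eta$. Its Hessian is rank one and uniformly bounded on $\mathcal X$, so a standard log-determinant telescoping argument produces a slave regret bound
$$\sum_{t=1}^T\ell_t^\eta(\bm x_t^\eta)-\sum_{t=1}^T\ell_t^\eta(\bm x)=O(d\log T)$$
for any $\bm x\in\mathcal X$. The tilted-exponential-weights master is then designed so that, simultaneously for every grid $\eta$,
$$\sum_{t=1}^T\ell_t^\eta(\bm x_t)\leq\sum_{t=1}^T\ell_t^\eta(\bm x_t^\eta)+O(\log K).$$

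Chaining these two inequalities and substituting the definition of $\ell_t^\eta$ gives, for each $\eta$ in the grid,
$$\eta\,\tilde R_T^{\bm x}-\eta^2 V_T^{\bm x}\leq C\,d\log T,$$
where the $\log K=O(\log\log T)$ contribution from the master is absorbed into $C\,d\log T$. Rearranging yields $\tilde R_T^{\bm x}\leq\eta V_T^{\bm x}+C(d\log T)/\eta$, and picking the grid point closest to $\eta^\star=\sqrt{d\log T/V_T^{\bm x}}$ delivers the bound $\tilde R_T^{\bm x}=O(\sqrt{V_T^{\bm x}\,d\log T}+GDd\log T)$; the additive term handles the case in which $\eta^\star$ exceeds the largest grid value $1/(5GD)$ and the optimizer is clipped to the boundary.

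The simultaneous worst-case bound $O(GD\sqrt{T\log\log T})$ is the more delicate half. I would derive it from a separate application of the master's Hedge guarantee: viewing the slaves as $K=O(\log T)$ experts on the bounded game with $|\pro{\nabla f_t(\bm x_t)}{\bm x_t-\bm x}|\leq GD$, exponential weights produces $O(GD\sqrt{T\log K})=O(GD\sqrt{T\log\log T})$ regret against the best expert, whose own iterates are trivially within $\tilde R_T^{\bm x}$ of $\bm x$ since no slave can beat the comparator by more than that. The main obstacle is to secure both bounds for the same sequence $(\bm x_t)_{t=1}^T$; this is exactly what the tilted-exponential-weights master of Van Erven and Koolen achieves, by providing the instance-dependent surrogate guarantee and the absolute Hedge guarantee in parallel, so that the final bound is the minimum of the two.
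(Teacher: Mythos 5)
This theorem is quoted from \citet{erven16}; the paper itself gives no proof of it, so there is no internal argument to compare against. Your reconstruction of the data-dependent half is essentially the standard MetaGrad analysis and is sound in outline: the identity $\ell_t^\eta(\bm x_t)=0$, the master's potential guarantee $-\sum_t\ell_t^\eta(\bm x_t^\eta)\leq O(\log\log T)$, the slave's $O(d\log T)$ surrogate regret via the log-determinant argument, and the resulting inequality $\eta\tilde R_T^{\bm x}-\eta^2V_T^{\bm x}\leq O(d\log T)$ for every grid point, optimized over the grid with the clipped case supplying the additive $GDd\log T$ term. (You should also note that $\eta^\star=\sqrt{d\log T/V_T^{\bm x}}$ never falls below the smallest grid point since $V_T^{\bm x}\leq G^2D^2T$, but that is a one-line check.)

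The genuine gap is in the worst-case bound $O(GD\sqrt{T\log\log T})$. Your argument invokes a ``Hedge guarantee'' of the form $O(GD\sqrt{T\log K})$ for the master against the best slave on the linearized losses, and then asserts that the best slave's iterates are ``trivially within $\tilde R_T^{\bm x}$ of $\bm x$.'' This does not close: bounding the master's regret against the best slave says nothing about regret against the comparator $\bm x$ unless you separately bound the best slave's linearized regret against $\bm x$, and the phrase ``no slave can beat the comparator by more than that'' is circular --- it presupposes the quantity you are trying to bound. Moreover, the MetaGrad master is not vanilla exponential weights on bounded linear losses; its only stated guarantee is the surrogate-loss potential inequality, so the $\sqrt{T\log K}$ Hedge bound you appeal to is not available off the shelf. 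To obtain the worst-case bound you must again route through the surrogate inequality (using $V_T^{\bm x}\leq G^2D^2T$ and tuning $\eta$ against the overhead that actually appears there), and you need to be explicit about which overhead term ($\log\log T$ from the prior versus $d\log T$ from the slave) survives in that tuning --- as written, your combined inequality carries the $d\log T$ term, which would only yield $O(GD\sqrt{Td\log T})$, not $O(GD\sqrt{T\log\log T})$. This part of the proposal therefore needs a different, carefully justified decomposition.
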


\begin{theorem}\citep{wang20}\label{wangthm1}
    For Maler, $R_T^{\bm x}$ is simultaneously bounded by $O(GD\sqrt T)$,
    \begin{align*}
    R_T^{\bm x}&\leq\tilde R_T^{\bm x}=O(\sqrt{V_T^{\bm x}d\log T})~\mathrm{and}\\
    R_T^{\bm x}&\leq\tilde R_T^{\bm x}=O(\sqrt{W_T^{\bm x}\log T}),
    \end{align*}
    for any $\bm x\in\mathcal X$.
\end{theorem}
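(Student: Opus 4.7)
My plan is to mirror the Maler architecture, running in parallel three families of experts (one per target bound) and combining them with a prior-weighted exponential meta-algorithm. The first family contains a single instance of OGD with the standard $\Theta(D/(G\sqrt{T}))$ step size, whose regret is $O(GD\sqrt{T})$ by Zinkevich's analysis. The second family $\{E_\eta^{\mathrm{e}}\}_{\eta}$ is indexed by a geometric grid of learning rates $\eta$ in $[1/(5GD\sqrt{T}),\,1/(5GD)]$, and each expert runs a full-matrix ONS-style update against the surrogate loss $\ell^{\eta,\mathrm{e}}_t(\bm x) = -\eta\pro{\nabla f_t(\bm x_t)}{\bm x-\bm x_t} + \eta^2(\pro{\nabla f_t(\bm x_t)}{\bm x-\bm x_t})^2$. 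The third family $\{E_\eta^{\mathrm{s}}\}_{\eta}$ is indexed over an analogous grid, with each expert running OGD-with-stepsize-$\Theta(1/(\eta^2 G^2 t))$ against $\ell^{\eta,\mathrm{s}}_t(\bm x) = -\eta\pro{\nabla f_t(\bm x_t)}{\bm x-\bm x_t} + \eta^2 G^2\|\bm x-\bm x_t\|^2$.

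First, I would check that each surrogate is suitably conditioned: $\ell^{\eta,\mathrm{e}}_t$ inherits the ``$-a+a^2$'' structure that makes it amenable to ONS in $\R^d$, and $\ell^{\eta,\mathrm{s}}_t$ is $2\eta^2 G^2$-strongly convex, so standard analyses of ONS and OGD respectively yield $O(d\log T)$ and $O(\log T)$ cumulative surrogate regret per expert. Second, I would combine the experts with an exponential-weight meta-algorithm whose per-round loss of expert $E$ is its surrogate $\ell^E_t$ evaluated at the expert's own iterate; since there are $O(\log T)$ experts per family and the surrogates can be normalized to lie in a fixed bounded interval, a standard Hedge potential argument gives meta-regret $O(\log T)$ against any individual expert. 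Telescoping the meta and internal bounds, for each grid $\eta$ and each $\bm x\in\mathcal X$ one obtains inequalities of the form
\[
\eta\tilde R_T^{\bm x} - \eta^2 V_T^{\bm x} \leq O(d\log T),\qquad \eta\tilde R_T^{\bm x} - \eta^2 W_T^{\bm x} \leq O(\log T).
\]

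The final step is to pick on each grid the $\eta$ that maximizes the left-hand side. Because the grid is geometric, the chosen $\eta$ lies within a constant factor of the continuous optima $\tilde R_T^{\bm x}/(2V_T^{\bm x})$ and $\tilde R_T^{\bm x}/(2W_T^{\bm x})$; if the optimum falls below the lower end of the grid, the $O(GD\sqrt T)$ bound from the single convex-pool expert dominates, and if it exceeds the upper end, the surrogate inequality itself is already $O(\log T)$. Solving the resulting quadratic inequalities in $\tilde R_T^{\bm x}$ yields $\tilde R_T^{\bm x}=O(\sqrt{V_T^{\bm x}d\log T})$ (the $d$ factor arising from the full-matrix preconditioner of ONS) and $\tilde R_T^{\bm x}=O(\sqrt{W_T^{\bm x}\log T})$, and $R_T^{\bm x}\leq\tilde R_T^{\bm x}$ follows from convexity of each $f_t$.

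The main obstacle is the bookkeeping that glues the three bounds together: the surrogates must be uniformly bounded so the Hedge analysis applies, which constrains the admissible range of $\eta$ and fixes the grid endpoints at precisely the values needed for the boundary cases to be covered by the convex-pool fallback on one side and by a trivial $O(\log T)$ bound on the other. Setting these constants coherently---so that the same meta-algorithm simultaneously competes with all three pools and never falls off any of the three bounds---is the delicate part; once the architecture is in place, the inner analyses (ONS, OGD, Hedge) are textbook.
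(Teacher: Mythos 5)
Your proposal follows essentially the same route as the paper: it reconstructs Maler's meta-plus-experts decomposition with the three surrogate-loss families over a geometric $\eta$-grid, combines the meta-regret and expert-regret bounds on the surrogates, and tunes $\eta$ over the grid with the same boundary-case handling, while the paper's only added content beyond citing \citet{wang20} is checking that the component lemmas hold for arbitrary $\bm x\in\mathcal X$ rather than only the offline minimizer, which your reconstruction also covers. (One sign typo: the linear term of your surrogate $\ell^{\eta,\mathrm e}_t$ should be $-\eta\pro{\nabla f_t(\bm x_t)}{\bm x_t-\bm x}$, i.e.\ $+\eta\pro{\nabla f_t(\bm x_t)}{\bm x-\bm x_t}$, so that $-\sum_t\ell^{\eta,\mathrm e}_t(\bm x)=\eta\tilde R_T^{\bm x}-\eta^2V_T^{\bm x}$, matching the inequality you then derive.)
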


Though Theorem~\ref{wangthm1} is derived only for $\bm x\in\mathrm{argmin}_{\bm x\in\mathcal X}\sum_{t=1}^Tf_t(\bm x)$ in the original paper by \citet{wang20}, the proof is also valid even when $\bm x$ is any vector in $\mathcal X$, so we rewrite the statement in this form.
The proof of this generalization is provided in Appendix~\ref{sec:wangthm1proof}.
Further explanations of universal algorithms are in Appendix~\ref{sec:universal}.

Concerning the regret bound of contaminated exp-concavity, the following theorem holds.
This theorem's assumption is satisfied when using MetaGrad or Maler, and the result for them is described after the proof of this theorem.

\begin{theorem}\label{ecupper}
Let $\alpha_t$ be a constant such that $f_t$ is $\alpha_t$-exp-concave (if $f_t$ is not exp-concave, then $\alpha_t$ is $0$) for each $t$.
Suppose that 
\begin{gather}
R_T^{\bm x}\leq\tilde R_T^{\bm x}=O\left(\sqrt{V_T^{\bm x}r_1(T)}+r_2(T)\right)\label{regret1}
\end{gather}
holds for some functions $r_1$, $r_2$, and point $\bm x\in\mathcal X$.
Then, it holds for any $\gamma>0$ that
\[R_T^{\bm x}=O\left(\frac1\gamma r_1(T)+GD\sqrt{k_\gamma r_1(T)}+r_2(T)\right),\]
where $k_\gamma :=\sum_{t=1}^T\max\{1-\gamma_t/\gamma,0\}$, $\gamma_t:=(1/2)\min\{1/(GD),\alpha_t\}$.
\end{theorem}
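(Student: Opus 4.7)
The plan is to use the self-bounding technique: combine the pointwise exp-concavity inequality (Lemma~\ref{ecineq}) with the assumed first-order regret bound~(\ref{regret1}) to produce a quadratic inequality in $V_T^{\bm x}$, and then solve it. Throughout, write $s_t := (\langle \nabla f_t(\bm x_t),\bm x_t-\bm x\rangle)^2$, so $V_T^{\bm x}=\sum_t s_t$ and $s_t\leq G^2 D^2$ by Cauchy-Schwarz with the diameter and gradient bounds.

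First I would apply Lemma~\ref{ecineq} with $\gamma'=\gamma_t$ (the largest value the lemma permits) on each round; on a non-exp-concave round $\gamma_t=0$ and the inequality degenerates to ordinary convexity, which is automatic. Summing yields
\[
R_T^{\bm x} \leq \tilde R_T^{\bm x} - \tfrac{1}{2}\sum_{t=1}^{T}\gamma_t s_t.
\]
Next, for the given $\gamma>0$, I would lower-bound $\sum_t \gamma_t s_t$ by $\gamma V_T^{\bm x}$ modulo a ``contamination correction''. Using $\min\{\gamma,\gamma_t\}=\gamma\bigl(1-\max\{1-\gamma_t/\gamma,0\}\bigr)$ and $s_t\leq G^2D^2$,
\[
\sum_{t=1}^{T}\gamma_t s_t \;\geq\; \sum_{t=1}^{T}\min\{\gamma,\gamma_t\}\, s_t \;\geq\; \gamma V_T^{\bm x} - \gamma G^2 D^2 k_\gamma.
\]
Substituting into the assumed bound~(\ref{regret1}) on $\tilde R_T^{\bm x}$ gives
\[
R_T^{\bm x} + \tfrac{\gamma}{2}V_T^{\bm x} \;\leq\; O\!\left(\sqrt{V_T^{\bm x}\,r_1(T)} + r_2(T)\right) + \tfrac{\gamma G^2 D^2 k_\gamma}{2},
\]
and then applying AM-GM in the form $\sqrt{V r_1}\leq \gamma V/4 + r_1/\gamma$ absorbs the $V_T^{\bm x}$-dependence on the right, leaving the intermediate estimate
\[
R_T^{\bm x} = O\!\left(\frac{r_1(T)}{\gamma} + \gamma G^2 D^2 k_\gamma + r_2(T)\right), \qquad (\star)
\]
which differs from the target bound only in that the middle term is $\gamma G^2D^2 k_\gamma$ rather than $GD\sqrt{k_\gamma\, r_1(T)}$.

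To reach the stated form I would split on $\gamma$. If $\gamma\leq\sqrt{r_1(T)/(G^2 D^2 k_\gamma)}$, then $\gamma G^2 D^2 k_\gamma\leq GD\sqrt{k_\gamma\, r_1(T)}$ directly, so $(\star)$ already yields the claim. Otherwise, I would re-apply $(\star)$ at the smaller value $\gamma'$ defined self-consistently by $\gamma'=\sqrt{r_1(T)/(G^2 D^2 k_{\gamma'})}$; such a $\gamma'$ exists uniquely since the left-hand side is strictly increasing and the right-hand side is nonincreasing in $\gamma'$ (as $k_\gamma$ is nondecreasing in $\gamma$). At $\gamma'$ the two $\gamma$-dependent terms of $(\star)$ both equal $GD\sqrt{k_{\gamma'}\,r_1(T)}$, and monotonicity gives $k_{\gamma'}\leq k_\gamma$, so $R_T^{\bm x}=O\bigl(GD\sqrt{k_\gamma\, r_1(T)}+r_2(T)\bigr)$, which is within the claimed bound for the original $\gamma$.

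The main obstacle is Step~2---cleanly isolating the contamination cost $\gamma G^2 D^2 k_\gamma$ from the quadratic-in-$V_T^{\bm x}$ term, which is what makes the self-bounding argument produce the correct interpolation between the $\alpha$-exp-concave rate $r_1(T)/\gamma$ and the convex ``correction'' $\sqrt{k_\gamma\, r_1(T)}$. Once $(\star)$ is in hand, the final conversion is a standard AM-GM and monotonicity argument.
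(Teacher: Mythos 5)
Your proof is correct, but the second half follows a genuinely different route from the paper's. The first half coincides: both arguments apply Lemma~\ref{ecineq} round by round and use $s_t\leq G^2D^2$ to isolate the contamination cost, arriving at $R_T^{\bm x}\leq\tilde R_T^{\bm x}-\frac{\gamma}{2}V_T^{\bm x}+\frac{\gamma}{2}k_\gamma G^2D^2$ (your $\min\{\gamma,\gamma_t\}$ identity is just a cleaner packaging of the paper's $\sum_{t:\gamma_t<\gamma}\frac{\gamma-\gamma_t}{2}G^2D^2\leq\frac{\gamma}{2}k_\gamma G^2D^2$). From there the paper assumes $R_T^{\bm x}\geq0$, rearranges to $V_T^{\bm x}\leq\frac{2}{\gamma}\tilde R_T^{\bm x}+k_\gamma G^2D^2$, substitutes this into~(\ref{regret1}), and obtains a quadratic self-bounding inequality in $\tilde R_T^{\bm x}$ that is solved via Lemma~\ref{hodai}; the term $GD\sqrt{k_\gamma r_1(T)}$ then falls out directly from $\sqrt{x+y}\leq\sqrt{x}+\sqrt{y}$. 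You instead keep $V_T^{\bm x}$ on the left, absorb $\sqrt{V_T^{\bm x}r_1(T)}$ by AM--GM, and get the weaker intermediate bound $(\star)$ with middle term $\gamma G^2D^2k_\gamma$, which you then upgrade by re-tuning $\gamma$ downward to the self-consistent value $\gamma'$ and invoking monotonicity of $\gamma\mapsto k_\gamma$. Your route dispenses with Lemma~\ref{hodai} and with the sign case-split on $R_T^{\bm x}$ (you simply drop the nonpositive $-\frac{\gamma}{4}V_T^{\bm x}$ term), at the price of the extra existence/monotonicity argument for $\gamma'$; that argument is sound (the map $\gamma\mapsto\gamma-\sqrt{r_1(T)/(G^2D^2k_\gamma)}$ is continuous and strictly increasing and is negative for small $\gamma$, and the constants in $(\star)$ are uniform in $\gamma$, so applying it at $\gamma'<\gamma$ with $k_{\gamma'}\leq k_\gamma$ is legitimate). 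The paper's substitution is the more direct path to the stated constant structure, while yours makes explicit that the $GD\sqrt{k_\gamma r_1(T)}$ term is exactly the value of the trade-off $r_1(T)/\gamma+\gamma G^2D^2k_\gamma$ at its balancing point.
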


Before proving this theorem, we prepare a lemma.
The proof of this lemma is given in Appendix~\ref{hodaiproof}.

\begin{lemma}\label{hodai}
For all $a,b,x\geq0$, if $x\leq\sqrt{ax}+b$, then $x\leq3(a+b)/2$.
\end{lemma}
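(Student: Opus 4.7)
The plan is to reduce the hypothesis $x \leq \sqrt{ax} + b$ to a quadratic inequality in $x$ and then apply an AM-GM-style bound. If $x \leq b$, the conclusion $x \leq 3(a+b)/2$ is immediate. Otherwise $x - b > 0$, and squaring the rearranged inequality $x - b \leq \sqrt{ax}$ yields $(x - b)^{2} \leq a x$, i.e.\ $x^{2} - (a + 2b)\,x + b^{2} \leq 0$. Since this quadratic in $x$ opens upward and has roots $\bigl((a + 2b) \pm \sqrt{a^{2} + 4ab}\bigr)/2$, $x$ must lie below the larger root:
\[
x \leq \frac{(a + 2b) + \sqrt{a^{2} + 4ab}}{2}.
\]

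To obtain the clean form stated in the lemma, I would then bound $\sqrt{a^{2} + 4ab} \leq 2a + b$; after squaring this reduces to $3a^{2} + b^{2} \geq 0$ and is therefore trivial. Substituting gives $x \leq \bigl((a + 2b) + (2a + b)\bigr)/2 = 3(a + b)/2$, as required.

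The only subtlety I anticipate is the choice of surrogate for $\sqrt{a^{2} + 4ab}$. The symmetric AM-GM bound $\sqrt{a(a + 4b)} \leq (a + (a + 4b))/2 = a + 2b$ would instead yield the conclusion $x \leq a + 2b$, which is sharper than $3(a+b)/2$ when $a \geq b$ but fails to dominate it when $b$ is much larger than $a$. The asymmetric bound $\sqrt{a^{2} + 4ab} \leq 2a + b$ is precisely what makes the uniform factor $3/2$ work for all $a, b \geq 0$, and is the one piece of the argument that is not entirely mechanical.
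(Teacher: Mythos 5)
Your proof is correct and follows essentially the same route as the paper: handle $x\leq b$ trivially, reduce to the quadratic $x^{2}-(a+2b)x+b^{2}\leq0$, and bound the larger root. The only cosmetic difference is in the final step, where you bound $\sqrt{a^{2}+4ab}\leq 2a+b$ directly, while the paper uses $\sqrt{a^{2}+4ab}\leq a+2\sqrt{ab}$ followed by AM--GM; both yield the same constant $3/2$.
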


\begin{proof}[Proof of Theorem~\ref{ecupper}]
From Lemma \ref{ecineq}, we have
\begin{align*}
R_T^{\bm x}
&=\sum_{t=1}^T(f_t(\bm x_t)-f_t(\bm x))\\
&\leq\sum_{t=1}^T\left(\pro{\nabla f_t(\bm x_t)}{\bm x_t-\bm x}-\frac{\gamma_t}2(\pro{\nabla f_t(\bm x_t)}{\bm x-\bm x_t})^2\right)\\
&=\tilde R_T^{\bm x}-\frac\gamma2V_T^{\bm x}+\sum_{t=1}^T\frac{\gamma-\gamma_t}2(\pro{\nabla f_t(\bm x_t)}{\bm x-\bm x_t})^2\\
&\leq\tilde R_T^{\bm x}-\frac\gamma2V_T^{\bm x}+\sum_{t: \gamma_t<\gamma} \frac{\gamma-\gamma_t}2G^2D^2\\
&\leq\tilde R_T^{\bm x}-\frac\gamma2V_T^{\bm x}+\frac\gamma2k_\gamma G^2D^2.
\end{align*}
If $R_T^{\bm x}<0$, 0 is an upper bound, so it is sufficient to think of the case $R_T^{\bm x}\geq0$. 
In this case, we have
\begin{gather}
V_T^{\bm x}\leq\frac2\gamma\tilde R_T^{\bm x}+k_\gamma G^2D^2.\label{ecineq2}
\end{gather}
From equation (\ref{regret1}), there exists a positive constant $C>0$ such that
\begin{align}
\tilde R_T^{\bm x}
&\leq C\left(\sqrt{V_T^{\bm x}r_1(T)}+r_2(T)\right)\nonumber\\
&\leq C\left(\sqrt{\left(\frac2\gamma\tilde R_T^{\bm x}+k_\gamma G^2D^2\right)r_1(T)}+r_2(T)\right)\nonumber\\
&\leq \sqrt{\frac2\gamma C^2r_1(T)\tilde R_T^{\bm x}}+CGD\sqrt{k_\gamma r_1(T)}+Cr_2(T).\label{eq:quad}
\end{align}
The second inequality holds from the inequality (\ref{ecineq2}), and the last inequality holds from the inequality $\sqrt{x+y}\leq\sqrt x+\sqrt y$ for $x,y\geq0$. 

From Lemma \ref{hodai} with $a=(2/\gamma) C^2r_1(T)$ and $b=CGD\sqrt{k_\gamma r_1(T)}+Cr_2(T)$, we have
\begin{align*}
\tilde R_T^{\bm x}
&\leq\frac32\left(\frac2\gamma C^2r_1(T)+CGD\sqrt{k_\gamma r_1(T)}+Cr_2(T)\right).
\end{align*}
From this inequality and $R_T^{\bm x}\leq\tilde R_T^{\bm x}$, Theorem \ref{ecupper} follows.
\end{proof}

The core of this proof is inequality~(\ref{eq:quad}), which can be regarded as a quadratic inequality.
Solving this inequality enables us to obtain a regret upper bound for contaminated cases from a regret upper bound for non-contaminated cases.

Theorem \ref{ecupper} combined with Theorem \ref{erventhm1}, Theorem \ref{wangthm1}, and Theorem \ref{zhangthm1} in the appendix gives upper bounds for universal algorithms; MetaGrad, Maler, and USC.
The following corollary shows that, even if exp-concave objective functions are $k$-contaminated, regret can be bounded by an additional term of $O(\sqrt{kd\log T})$. 
This regret bound is better than ONS's in the parameter $k$.

\begin{corollary}\label{ecmetagradmalerusc}
If a sequence of objective functions $(f_1,f_2,\ldots,f_T)$ is $k$-contaminated $\alpha$-exp-concave, the regret bound of MetaGrad, Maler, and USC with MetaGrad or Maler as an expert algorithm is
\begin{gather}
    R_T=O\left(\frac d\gamma \log T+GD\sqrt{kd\log T}\right),\label{eq:regret_ec_universal}
\end{gather}
where $\gamma:=(1/2)\min\{1/(GD),\alpha\}$.
\end{corollary}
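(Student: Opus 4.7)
The plan is to invoke Theorem~\ref{ecupper} with each of the three universal algorithms, verifying its hypothesis from Theorems~\ref{erventhm1}, \ref{wangthm1}, and~\ref{zhangthm1}, and then bounding the contamination measure $k_\gamma$ by the contamination parameter $k$.

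First, I would check that each algorithm satisfies inequality~(\ref{regret1}) with $r_1(T)=d\log T$ and $r_2(T)=O(GDd\log T)$. For MetaGrad this is exactly the second bound in Theorem~\ref{erventhm1}. For Maler it follows from the first displayed bound in Theorem~\ref{wangthm1}, with $r_2(T)=0$. For USC with MetaGrad or Maler as an expert, the bound is inherited from the corresponding expert via Theorem~\ref{zhangthm1} in the appendix, at the cost of at most an additive $O(\sqrt{T})$ meta-regret term — which, under the reformulation obtained via Theorem~\ref{ecupper}, will again be absorbed. Applying Theorem~\ref{ecupper} with these choices then yields
\[
R_T^{\bm x} = O\!\left(\frac{d}{\gamma}\log T + GD\sqrt{k_\gamma d\log T} + GDd\log T\right).
\]

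Next, I would argue that $k_\gamma \le k$ for $\gamma=(1/2)\min\{1/(GD),\alpha\}$. By definition, $k_\gamma=\sum_{t=1}^T\max\{1-\gamma_t/\gamma,0\}$. For any round $t$ in which $f_t$ is $\alpha$-exp-concave, one has $\gamma_t=(1/2)\min\{1/(GD),\alpha_t\}\ge \gamma$, so the contribution of such a round is zero. Because at most $k$ rounds lie outside the $\alpha$-exp-concave class and each contributes at most $1$, we get $k_\gamma\le k$. Substituting this into the bound above yields the second term $GD\sqrt{kd\log T}$ in the target inequality.

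Finally, I would clean up the residual $r_2$ term: since $\gamma\le 1/(2GD)$, we have $GDd\log T \le (d/(2\gamma))\log T$, so the $GDd\log T$ contribution is absorbed into $(d/\gamma)\log T$. Combining the three pieces gives exactly~(\ref{eq:regret_ec_universal}). The only nonroutine step is the USC case, where one must check that Theorem~\ref{zhangthm1} indeed produces an inequality of the form~(\ref{regret1}) with the same $r_1, r_2$ (up to constants) as the underlying expert; this is the main bookkeeping obstacle, but it is exactly the kind of meta-regret-plus-expert-regret decomposition that USC is designed to provide, so no new ideas are needed beyond quoting the appendix result.
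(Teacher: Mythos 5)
Your treatment of MetaGrad and Maler is correct and is essentially the paper's own argument: verify hypothesis~(\ref{regret1}) with $r_1(T)=d\log T$ and $r_2(T)=O(GDd\log T)$ (resp.\ $r_2=0$), apply Theorem~\ref{ecupper}, bound $k_\gamma\le k$ by noting that $\alpha$-exp-concave rounds have $\gamma_t\ge\gamma$ and hence contribute zero, and absorb the $GDd\log T$ term using $GD\le 1/(2\gamma)$.

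The USC case, however, contains a genuine gap. You propose to treat the meta-regret as "an additive $O(\sqrt{T})$ term" that "will again be absorbed," but an additive $O(\sqrt{T})$ cannot be absorbed into $O\bigl((d/\gamma)\log T+GD\sqrt{kd\log T}\bigr)$ when $k\ll T$; it would dominate the claimed bound. Moreover, Theorem~\ref{zhangthm1} does not produce an inequality of the form~(\ref{regret1}) for a fixed comparator $\bm x$: the quantity it controls is $\sum_{t=1}^T(\pro{\nabla f_t(\bm x_t)}{\bm x_t-\bm x_t^i})^2$, a variance measured against the \emph{time-varying} expert iterates $\bm x_t^i$, which is not $V_T^{\bm x}$. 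Consequently Theorem~\ref{ecupper}, as stated, does not apply to the meta-regret, and one cannot simply "inherit" the expert's $(r_1,r_2)$. The paper's proof (Appendix~\ref{ecuscproof}) instead decomposes $R_T=R_T^{\mathrm{meta}}+R_T^{\mathrm{expert}}$ and runs the self-bounding argument a second time directly on $R_T^{\mathrm{meta}}$: it applies Lemma~\ref{ecineq} at the pairs $(\bm x_t,\bm x_t^i)$ to get $\sum_t(\pro{\nabla f_t(\bm x_t)}{\bm x_t-\bm x_t^i})^2\le(2/\gamma)\sum_t\pro{\nabla f_t(\bm x_t)}{\bm x_t-\bm x_t^i}+k_\gamma G^2D^2$, substitutes this into the bound of Theorem~\ref{zhangthm1}, and solves the resulting quadratic inequality via Lemma~\ref{hodai}, using $|\mathcal E|=O(\log T)$ and $\Gamma=O(\log\log T)$ to land on $R_T^{\mathrm{meta}}=O\bigl((d/\gamma)\log T+GD\sqrt{kd\log T}\bigr)$; the expert term is then handled by the already-proved MetaGrad/Maler case. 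This second self-bounding step with a moving comparator is the missing idea in your proposal, not mere bookkeeping.
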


We only give proof for MetaGrad and Maler here, and the proof for USC will be provided in Appendix \ref{ecuscproof}.

\begin{proof}
As for MetaGrad and Maler, from Theorem \ref{erventhm1} and Theorem \ref{wangthm1}, 
\[\tilde R_T^{\bm x}=O(\sqrt{V_T^{\bm x}d\log T}+GDd\log T)\]
holds for any $\bm x\in\mathcal X$. Therefore, by Theorem \ref{ecupper}, we have
\[R_T^{\bm x}=O\left(\frac d\gamma\log T+GD\sqrt{k_\gamma d\log T}\right),\]
where $GD=O(1/\gamma)$ is used, which follows from $\gamma=(1/2)\min\{1/(GD),\alpha\}$.
Here, $k_\gamma$ satisfies
\begin{align*}
k_\gamma
=\sum_{t=1}^T\max\left\{1-\frac{\gamma_t}\gamma,0\right\}
=\sum_{t\colon\gamma_t<\gamma}\left(1-\frac{\gamma_t}\gamma\right)
\leq k.
\end{align*}
The inequality follows from the fact that if $\gamma_t<\gamma$, then $\alpha_t<\alpha$ holds.
Hence, we have
\[R_T^{\bm x}=O\left(\frac d\gamma \log T+GD\sqrt{kd\log T}\right),\]
especially, we get the regret upper bound~(\ref{eq:regret_ec_universal}).
\end{proof}

As for strongly convex functions, we can get a similar result as Theorem \ref{ecupper}.

\begin{theorem}\label{scupper}
Let $\lambda_t$ be a constant such that $f_t$ is $\lambda_t$-strongly convex (if $f_t$ is not strongly convex, then $\lambda_t$ is $0$) for each $t$.
Suppose that 
\[
R_T^{\bm x}\leq\tilde R_T^{\bm x}=O\left(\sqrt{W_T^{\bm x}r_1(T)}+r_2(T)\right)
\]
holds for some functions $r_1$, $r_2$, and point $\bm x\in\mathcal X$.
Then, it holds for any $\lambda>0$ that
\[R_T^{\bm x}=O\left(\frac{G^2}\lambda r_1(T)+GD\sqrt{k_\lambda r_1(T)}+r_2(T)\right),\]
where $k_\lambda:=\sum_{t=1}^T\max\{1-\lambda_t/\lambda,0\}$.
\end{theorem}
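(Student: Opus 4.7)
The plan is to mirror the proof of Theorem~\ref{ecupper}, but substituting the strong convexity inequality for the exp-concavity inequality from Lemma~\ref{ecineq}. The key observation is that the two hypotheses differ only in whether the quadratic gain term is $V_T^{\bm x}$ (gradient-weighted) or $W_T^{\bm x}$ (displacement-weighted), and strong convexity naturally produces a gain of the displacement form $\sum_t \frac{\lambda_t}{2}\|\bm x_t - \bm x\|^2$.

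First, I would apply $\lambda_t$-strong convexity at each round to get
\[
R_T^{\bm x}\leq\tilde R_T^{\bm x}-\sum_{t=1}^T\frac{\lambda_t}{2}\|\bm x_t-\bm x\|^2.
\]
To introduce a uniform coefficient $\lambda$, I would rewrite the sum as
\[
-\frac{\lambda}{2}\sum_{t=1}^T\|\bm x_t-\bm x\|^2+\sum_{t=1}^T\frac{\lambda-\lambda_t}{2}\|\bm x_t-\bm x\|^2,
\]
keep only the positive contributions in the second sum (indices with $\lambda_t<\lambda$), and bound each such term by $\frac{\lambda-\lambda_t}{2}D^2$. Using the definitions of $W_T^{\bm x}$ and $k_\lambda$, this yields
\[
R_T^{\bm x}\leq\tilde R_T^{\bm x}-\frac{\lambda}{2G^2}W_T^{\bm x}+\frac{\lambda D^2}{2}k_\lambda.
\]

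Next, assuming $R_T^{\bm x}\geq 0$ (otherwise the bound is trivial), I would rearrange to obtain a self-bounding estimate
\[
W_T^{\bm x}\leq\frac{2G^2}{\lambda}\tilde R_T^{\bm x}+G^2D^2 k_\lambda,
\]
and substitute this into the hypothesis $\tilde R_T^{\bm x}\leq C\bigl(\sqrt{W_T^{\bm x}r_1(T)}+r_2(T)\bigr)$. Splitting the resulting square root via $\sqrt{x+y}\leq\sqrt{x}+\sqrt{y}$ produces a quadratic-type inequality in $\tilde R_T^{\bm x}$ of the form
\[
\tilde R_T^{\bm x}\leq\sqrt{\tfrac{2G^2 C^2}{\lambda}r_1(T)\,\tilde R_T^{\bm x}}+CGD\sqrt{k_\lambda r_1(T)}+Cr_2(T),
\]
exactly analogous to inequality~(\ref{eq:quad}) in the exp-concave case. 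Applying Lemma~\ref{hodai} with $a=\frac{2G^2C^2}{\lambda}r_1(T)$ and $b=CGD\sqrt{k_\lambda r_1(T)}+Cr_2(T)$, together with $R_T^{\bm x}\leq\tilde R_T^{\bm x}$, delivers the claimed bound.

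The proof is essentially routine once the analogy with Theorem~\ref{ecupper} is set up; the only subtle point is the bookkeeping that turns $\sum_{t:\lambda_t<\lambda}(\lambda-\lambda_t)$ into $\lambda k_\lambda$ via the definition of $k_\lambda$, and the factor $G^2$ that appears because $W_T^{\bm x}$ absorbs a $G^2$ into its definition whereas the strong-convexity gain does not. No genuine obstacle is anticipated beyond careful tracking of these constants.
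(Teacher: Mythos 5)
Your proposal is correct and follows essentially the same route as the paper's own proof: the same strong-convexity decomposition with the $\frac{\lambda-\lambda_t}{2}\|\bm x_t-\bm x\|^2$ correction terms bounded by $\frac{\lambda}{2}k_\lambda D^2$, the same self-bounding rearrangement $W_T^{\bm x}\leq\frac{2G^2}{\lambda}\tilde R_T^{\bm x}+k_\lambda G^2D^2$, and the same application of Lemma~\ref{hodai} with the same choices of $a$ and $b$. All constants and bookkeeping check out.
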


This theorem can be derived in almost the same manner as the proof of Theorem \ref{ecupper}, other than using the definition of strong convexity and $k_\lambda$.
A more detailed proof is in Appendix \ref{scupperproof}.

Theorem \ref{scupper} combined with Theorem \ref{erventhm1}, Theorem \ref{wangthm1}, and Theorem \ref{zhangthm1} in the appendix gives upper bounds for universal algorithms; MetaGrad, Maler, and USC.
This corollary shows that, even if strongly convex objective functions are $k$-contaminated, regret can be bounded by an additional term of $O(\sqrt{k\log T})$ if Maler or USC with Maler as an expert algorithm is used. 

\begin{corollary}\label{scmetagradmalerusc}
If a sequence of objective functions $(f_1,f_2,\ldots,f_T)$ is $k$-contaminated $\lambda$-strongly convex, the regret bound of MetaGrad, Maler, and USC with Maler as an expert algorithm is
\[R_T=O\left(\left(\frac {G^2}\lambda+GD\right)\tilde d\log T+GD\sqrt{k \tilde d\log T}\right),\]
where $\tilde d$ is $d$ in the case of MetaGrad and 1 in the case of the other two algorithms.
\end{corollary}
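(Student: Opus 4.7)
The plan is to mirror the proof strategy of Corollary~\ref{ecmetagradmalerusc}: for each algorithm I first identify a problem-dependent regret bound of the shape required by Theorem~\ref{scupper}, namely $R_T^{\bm x}\le\tilde R_T^{\bm x}=O(\sqrt{W_T^{\bm x}\,r_1(T)}+r_2(T))$, then invoke Theorem~\ref{scupper} with a suitable $\lambda$, and finally replace $k_\lambda$ by $k$. Since Theorem~\ref{scupper} is stated for arbitrary $\bm x\in\mathcal X$, I will at the end specialize $\bm x$ to the comparator $\argmin_{\bm x\in\mathcal X}\sum_{t=1}^T f_t(\bm x)$ in order to translate the bound on $R_T^{\bm x}$ into a bound on the regret $R_T$.

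For Maler the required bound is already one of the two simultaneous guarantees in Theorem~\ref{wangthm1}, namely $\tilde R_T^{\bm x}=O(\sqrt{W_T^{\bm x}\log T})$, so I apply Theorem~\ref{scupper} directly with $r_1(T)=\log T$ and $r_2(T)=0$, which yields $R_T^{\bm x}=O((G^2/\lambda)\log T+GD\sqrt{k_\lambda\log T})$. For MetaGrad the natural bound from Theorem~\ref{erventhm1} is expressed through $V_T^{\bm x}$ rather than $W_T^{\bm x}$, so the minor technical step is to pass from one to the other. This is done by Cauchy--Schwarz together with the gradient bound: for every $t$,
\[
(\pro{\nabla f_t(\bm x_t)}{\bm x_t-\bm x})^2\le\|\nabla f_t(\bm x_t)\|^2\|\bm x_t-\bm x\|^2\le G^2\|\bm x_t-\bm x\|^2,
\]
whence $V_T^{\bm x}\le W_T^{\bm x}$. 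Substituting this into Theorem~\ref{erventhm1} gives $\tilde R_T^{\bm x}=O(\sqrt{W_T^{\bm x}\,d\log T}+GDd\log T)$, which fits Theorem~\ref{scupper} with $r_1(T)=d\log T$ and $r_2(T)=GDd\log T$; it produces $R_T^{\bm x}=O((G^2/\lambda)d\log T+GDd\log T+GD\sqrt{k_\lambda d\log T})$. Writing the two outcomes in the unified form of the corollary accounts for the extra $GD\tilde d\log T$ term, with $\tilde d=d$ for MetaGrad and $\tilde d=1$ for Maler.

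To replace $k_\lambda$ by $k$ I repeat the argument used in the exp-concave case: since $\lambda_t<\lambda$ implies $f_t$ is not $\lambda$-strongly convex and hence contributes to the contamination set,
\[
k_\lambda=\sum_{t\colon\lambda_t<\lambda}\left(1-\frac{\lambda_t}{\lambda}\right)\le|\{t:\lambda_t<\lambda\}|\le k.
\]
This finishes MetaGrad and Maler. The USC case is handled through USC's meta-regret guarantee (Theorem~\ref{zhangthm1} in the appendix), which yields $\tilde R_T^{\bm x}=O(\sqrt{W_T^{\bm x}\log T})$ when Maler is used as the expert, so the same application of Theorem~\ref{scupper} goes through; I will defer the detailed verification of this step to Appendix~\ref{ecuscproof} (analogously to how the USC half of Corollary~\ref{ecmetagradmalerusc} is postponed).

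The only genuinely non-routine step is the Cauchy--Schwarz conversion from $V_T^{\bm x}$ to $W_T^{\bm x}$ needed to feed MetaGrad's $V$-style bound into Theorem~\ref{scupper}, which is stated in the $W$-style form appropriate to strong convexity; once this is in place the rest is a direct transcription of the exp-concave proof with $\gamma\leftrightarrow\lambda/G^2$ and $k_\gamma\leftrightarrow k_\lambda$.
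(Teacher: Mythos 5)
Your proposal is correct and follows essentially the same route as the paper: Maler's $W$-style bound and MetaGrad's $V$-style bound (converted via $V_T^{\bm x}\le W_T^{\bm x}$, which the paper leaves implicit but you rightly make explicit) are fed into Theorem~\ref{scupper}, followed by $k_\lambda\le k$ and specialization of $\bm x$ to the offline minimizer. One small caveat on the deferred USC step: Theorem~\ref{zhangthm1} bounds the meta-regret against the time-varying expert iterates $\bm x_t^i$, not against a fixed $\bm x$, so Theorem~\ref{scupper} does not apply verbatim there; the paper instead decomposes $R_T=R_T^{\mathrm{meta}}+R_T^{\mathrm{expert}}$ and reruns the self-bounding argument separately for the meta-regret.
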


This corollary can be derived from Theorem \ref{scupper} in almost the same manner as the proof of Corollary \ref{ecmetagradmalerusc}.
A more detailed proof is in Appendix \ref{scmetagradmalerproof} and Appendix \ref{scuscproof}.

\begin{remark}
If $(f_1,f_2,\ldots,f_T)$ is $k_1$-contaminated $\alpha$-exp-concave and $k_2$-contaminated $\lambda$-strongly convex, then we have two regret upper bounds:
\[R_T=O\left(\frac d\gamma \log T+GD\sqrt{k_1d\log T}\right),\]
from Corollary \ref{ecmetagradmalerusc} and 
\[R_T=O\left(\left(\frac {G^2}\lambda+GD\right)\tilde d\log T+GD\sqrt{k_2 \tilde d\log T}\right),\]
from Corollary \ref{scmetagradmalerusc}.
Here, strongly convex functions are also exp-concave functions from Lemma \ref{inclusion}.
Therefore, if $\lambda/G^2\geq\alpha$, then $k_1\leq k_2$.
\end{remark}

\begin{remark}\label{rem:prior}
Note that the universal algorithms analyzed in this section do not require additional information other than the gradient, which is a valuable property in practical use.
However, comparing lower bounds in Corollary \ref{eclower} and Corollary \ref{sclower} with upper bounds in Corollary \ref{ecmetagradmalerusc} and Corollary \ref{scmetagradmalerusc} respectively, there are gaps between them.
This implies that our upper bounds in this section or lower bounds in Section~\ref{lowerbounds} might not be tight.
As we will discuss in the next section, this gap can be removed if additional information on function classes are available while it is not always the case in real-world applications.
\end{remark}

\section{Regret Upper Bounds Given Additional Information}\label{sec:hybrid}
In this section, we propose a method that achieves better bounds than those of universal algorithms discussed in the previous section under the condition that the information of the class of the last objective function is revealed.
The experimental performance of this method is shown in Appendix~\ref{sec:exp}.

We denote $S_1\coloneqq\{t\in[T]\mid f_t~\mathrm{is}~\lambda\text{-strongly convex}\}$, $S_2\coloneqq\{t\in[T]\backslash S_1\mid f_t~\mathrm{is}~\alpha\text{-exp-concave}\}$, $U\coloneqq[T]\backslash (S_1\cup S_2)$, and $k\coloneqq|U|$.
The algorithm using additional information is shown in Algorithm~\ref{alg:hybrid} ($\bm I_d$ is $d$ dimensional identity matrix, and $\|\cdot\|_{\bm A}^2$ means $\pro{\bm A\cdot}{\cdot}$).
This algorithm is a generalization of OGD and ONS.
Indeed, $(S_1,S_2,U)=([T],\emptyset,\emptyset)$ gives normal OGD and $(S_1,S_2,U)=(\emptyset,[T],\emptyset)$ gives normal ONS.

\begin{algorithm}[tb]
    \caption{Algorithm using additional information}
    \begin{algorithmic}[1]\label{alg:hybrid}    
    \REQUIRE convex set $\mathcal X\subset\R^d$, $\bm x_1\in\mathcal X$, $T$, $D$, $G$, $\alpha$, $\lambda$
    \STATE Set $\gamma\coloneqq(1/2)\min\{1/(GD),\alpha\}$, $\varepsilon\coloneqq\sqrt2G/D$, $\bm A_0\coloneqq\varepsilon \bm I_d$.
    \FOR{$t=1$ {\bfseries to} $T$}
    \STATE Play $\bm x_t$ and observe cost $f_t(\bm x_t)$.
    \STATE Update: 
    \[
      \bm A_t=\bm A_{t-1}+\begin{dcases}
        \lambda\bm I_d,&t\in S_1\\
        \gamma\nabla f_t(\bm x_t)\nabla f_t(\bm x_t)^\top,&t\in S_2\\
        \frac{G}{D\sqrt{2|[t]\cap U|}}\bm I_d,&t\in U,
    \end{dcases}
    \]
    \STATE Newton step and generalized projection:
    \[
      \bm y_{t+1}=\bm x_t-\bm A_t^{-1}\nabla f_t(\bm x_t),
    \]
    \[
      \bm x_{t+1}=\Pi_{\mathcal X}^{\bm A_t}(\bm y_{t+1})\coloneqq\argmin_{\bm x\in\mathcal X}\{\|\bm y_{t+1}-\bm x\|_{\bm A_t}^2\}.
    \]
    \ENDFOR
    \end{algorithmic}
\end{algorithm}

Before stating the regret upper bounds of Algorithm~\ref{alg:hybrid}, we prepare the following lemma:

\begin{lemma}\label{lem:gradbound}
    Let $\{\bm x_t\}_t$ be the sequence generated by Algorithm~\ref{alg:hybrid} and $S_1$, $S_2$, $U$, and $k$ be as defined above.
    Then, the following inequalities hold:
    \begin{align}
        \sum_{t\in S_1}\|\nabla f_t(\bm x_t)\|_{\bm A_t^{-1}}^2
        &\leq\frac{G^2}{\lambda}\log\left(1+\frac{\lambda D}{\sqrt2G}|S_1|\right),\label{ineq:lem1}\\
        \sum_{t\in S_2}\|\nabla f_t(\bm x_t)\|_{\bm A_t^{-1}}^2
        &\leq\frac d\gamma\log\left(1+\frac{\lambda D}{\sqrt2G}|S_1|+\frac{\gamma GD}{\sqrt2}|S_2|+\sqrt k\right),\label{ineq:lem2}\\
        \sum_{t\in U}\|\nabla f_t(\bm x_t)\|_{\bm A_t^{-1}}^2
        &\leq \sqrt2GD(\sqrt{k+1}-1).\label{ineq:lem3}
    \end{align}
\end{lemma}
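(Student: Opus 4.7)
The strategy is to treat the three inequalities separately, exploiting the monotonicity $\bm A_t \succeq \bm A_{t-1}$ that holds throughout Algorithm~\ref{alg:hybrid} since every update adds a positive semidefinite matrix. For (\ref{ineq:lem1}) and (\ref{ineq:lem3}) I plan to use this monotonicity to drop contributions from rounds outside the relevant set $S_1$ or $U$, reducing $\bm A_t$ to a scalar multiple of $\bm I_d$ so that each summand becomes an explicit scalar. For (\ref{ineq:lem2}) the same monotonicity lets me run the standard ONS-style log-determinant telescoping over only the $S_2$ rounds, since the remaining rounds only inflate $\det \bm A_t$.

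For (\ref{ineq:lem1}), if $t$ is the $j$-th element of $S_1$ then $\bm A_t \succeq (\varepsilon + j\lambda)\bm I_d$, so each term is at most $G^2/(\varepsilon + j\lambda)$, and a Riemann-integral bound on $\sum_{j=1}^{|S_1|} 1/(\varepsilon + j\lambda)$ produces the stated logarithm after substituting $\varepsilon = \sqrt{2}G/D$. For (\ref{ineq:lem3}), if $t$ is the $j$-th element of $U$ then dropping the $S_1$ and $S_2$ updates yields $\bm A_t \succeq \frac{G}{D\sqrt{2}}\bigl(2 + \sum_{i=1}^{j}1/\sqrt{i}\bigr)\bm I_d$, where the ``2'' absorbs the initial $\varepsilon$. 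Using $\sum_{i=1}^{j}1/\sqrt{i} \geq 2(\sqrt{j+1}-1)$ collapses this to $\bm A_t \succeq (\sqrt{2}G\sqrt{j+1}/D)\bm I_d$, and the remaining sum $\sum_{j=1}^{k}GD/\sqrt{2(j+1)}$ telescopes via another integral bound to exactly $\sqrt{2}GD(\sqrt{k+1}-1)$.

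For (\ref{ineq:lem2}), only the $S_2$ updates are rank-one, and for them the Sherman--Morrison / matrix-determinant lemma produces
\[\|\nabla f_t(\bm x_t)\|_{\bm A_t^{-1}}^2 \;=\; \frac{g_t^{\top}\bm A_{t-1}^{-1}g_t}{1+\gamma\, g_t^{\top}\bm A_{t-1}^{-1}g_t} \;\leq\; \frac{1}{\gamma}\log\frac{\det \bm A_t}{\det \bm A_{t-1}},\]
with $g_t=\nabla f_t(\bm x_t)$, where the inequality uses $z/(1+z)\leq\log(1+z)$ for $z\geq 0$. Since $\log(\det\bm A_t/\det\bm A_{t-1})\geq 0$ on the $S_1\cup U$ rounds (those increments are PSD), the sum over $S_2$ is dominated by the telescoping total $(1/\gamma)\log(\det\bm A_T/\det\bm A_0)$. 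AM--GM on the eigenvalues then gives $\det \bm A_T \leq (\operatorname{tr}(\bm A_T)/d)^d$, while $\operatorname{tr}(\bm A_T)$ decomposes additively into $d\varepsilon + d\lambda|S_1| + \gamma G^2|S_2| + \frac{dG}{D\sqrt{2}}\sum_{j=1}^{k}1/\sqrt{j}$; bounding the last sum by $2\sqrt{k}$, dividing by $d\varepsilon=d\sqrt{2}G/D$, and using $d\geq 1$ to weaken the $|S_2|$ coefficient matches the argument of the log in (\ref{ineq:lem2}). I expect the main obstacle to be exactly this trace bookkeeping: keeping the constants aligned with $\varepsilon = \sqrt{2}G/D$ so that the coefficients $1$, $\lambda D/(\sqrt{2}G)$, $\gamma GD/\sqrt{2}$, and $\sqrt{k}$ come out precisely as stated.
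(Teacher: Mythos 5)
Your proposal is correct and follows essentially the same route as the paper: minimum-eigenvalue lower bounds plus integral comparisons for the $S_1$ and $U$ sums, and a telescoping log-determinant argument over $S_2$ with the non-$S_2$ rounds absorbed by the monotonicity of $\det\bm A_t$. The only cosmetic differences are that you derive the per-round inequality via Sherman--Morrison and the matrix determinant lemma where the paper invokes the trace--log-det inequality $\tr(\bm A^{-1}(\bm A-\bm B))\le\log(|\bm A|/|\bm B|)$ (Lemma~\ref{lem:for_ons}), and that you bound $\det\bm A_T$ by AM--GM on the trace where the paper uses the largest eigenvalue; both yield the stated constants.
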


\begin{proof}
    Inequality~(\ref{ineq:lem1}) can be obtained as follows:
    \[
        \sum_{t\in S_1}\|\nabla f_t(\bm x_t)\|_{\bm A_t^{-1}}^2
        \leq G^2\sum_{t\in S_1}\frac1{\lambda_{\mathrm{min}}(\bm A_t)}
        \leq G^2\sum_{i=1}^{|S_1|}\frac1{\varepsilon+\lambda i}
        \leq G^2\int_{0}^{|S_1|}\frac{\dd s}{\varepsilon+\lambda s}
        =\frac{G^2}{\lambda}\log\left(1+\frac{\lambda D}{\sqrt2G}|S_1|\right),
    \]
    where $\lambda_{\mathrm{min}}(\bm A_t)$ is the minimum eigenvalue of the matrix $\bm A_t$, which at least increases by $\lambda$ when $t\in S_1$.
    
    We can bound the left-hand side of the inequality~(\ref{ineq:lem2}) as follows:
    \begin{align*}
      \sum_{t\in S_2}\|\nabla f_t(\bm x_t)\|_{\bm A_t^{-1}}^2
      &=\sum_{t\in S_2}\tr(\bm A_t^{-1}\nabla f_t(\bm x_t)(\nabla f_t(\bm x_t))^\top)\\
      &=\frac1\gamma\sum_{t\in S_2}\tr(\bm A_t^{-1}(\bm A_t-\bm A_{t-1}))\\
      &\leq\frac1\gamma\sum_{t\in S_2}\log\frac{|\bm A_t|}{|\bm A_{t-1}|}.
    \end{align*}
    The first inequality is from Lemma~\ref{lem:for_ons} in Appendix~\ref{sec:for_ons}.
    Since $|\bm A_t|\geq|\bm A_{t-1}|~(\forall t\in S_1\cup U)$,
    \begin{align*}
      \frac1\gamma\sum_{t\in S_2}\log\frac{|\bm A_t|}{|\bm A_{t-1}|}
      &\leq\frac1\gamma\sum_{t=1}^T\log\frac{|\bm A_t|}{|\bm A_{t-1}|}\\
      &=\frac1\gamma\log\frac{|\bm A_T|}{|\bm A_{0}|}\\
      &\leq\frac d\gamma\log\left(1+\frac{\lambda D}{\sqrt2G}|S_1|+\frac{\gamma GD}{\sqrt2}|S_2|+\sqrt k\right).
    \end{align*}
     The last inequality is from the fact that the largest eigenvalue of $\bm A_T$ is at most $\sqrt2G/D+\lambda|S_1|+\gamma G^2|S_2|+(G/D)\sqrt{2k}$.
    
    Inequality~(\ref{ineq:lem3}) can be obtained as follows:
    \begin{align*}
      \sum_{t\in U}\|\nabla f_t(\bm x_t)\|_{\bm A_t^{-1}}^2
      &\leq G^2\sum_{t\in U}\frac1{\lambda_{\mathrm{min}}(\bm A_t)}\\
      &\leq G^2\sum_{t\in U}\frac1{\varepsilon+\sum_{i=1}^{|[t]\cap U|}\frac{G}{D\sqrt{2i}}}\\
      &\leq G^2\sum_{t\in U}\frac1{\varepsilon+\sqrt2\frac GD(\sqrt{|[t]\cap U|+1}-1)}\\
      &=\frac{GD}{\sqrt2}\sum_{i=1}^k\frac1{\sqrt{i+1}}\\
      &\leq \sqrt2GD(\sqrt{k+1}-1).
    \end{align*}
\end{proof}

Using this lemma, we can bound the regret of Algorithm~\ref{alg:hybrid} as follows:

\begin{theorem}\label{thm:hybrid}
    Let $k$ be defined at the beginning of this section.
    Algorithm~\ref{alg:hybrid} guarantees 
    \[
        R_T=O\left(\left(\frac{G^2}\lambda+\frac d\gamma\right)\log T+GD\sqrt k\right).
    \]
\end{theorem}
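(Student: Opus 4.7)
The plan is to combine the classical ONS analysis with the round-type-dependent update matrix $\bm A_t$ defined in Algorithm~\ref{alg:hybrid}. First I would fix a comparator $\bm u\in\argmin_{\bm x\in\mathcal X}\sum_{t=1}^T f_t(\bm x)$ and upper bound the instantaneous regret $f_t(\bm x_t)-f_t(\bm u)$ according to the round type: for $t\in S_1$, $\lambda$-strong convexity gives $f_t(\bm x_t)-f_t(\bm u)\le\pro{\nabla f_t(\bm x_t)}{\bm x_t-\bm u}-(\lambda/2)\|\bm x_t-\bm u\|^2$; for $t\in S_2$, Lemma~\ref{ecineq} gives $f_t(\bm x_t)-f_t(\bm u)\le\pro{\nabla f_t(\bm x_t)}{\bm x_t-\bm u}-(\gamma/2)\pro{\nabla f_t(\bm x_t)}{\bm x_t-\bm u}^2$; and for $t\in U$, convexity alone leaves just the linear term.

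Next I would apply the standard Newton-step expansion. From $\bm y_{t+1}=\bm x_t-\bm A_t^{-1}\nabla f_t(\bm x_t)$ together with the non-expansiveness of $\Pi_{\mathcal X}^{\bm A_t}$ in $\bm A_t$-norm, one obtains $2\pro{\nabla f_t(\bm x_t)}{\bm x_t-\bm u}\le\|\bm x_t-\bm u\|_{\bm A_t}^2-\|\bm x_{t+1}-\bm u\|_{\bm A_t}^2+\|\nabla f_t(\bm x_t)\|_{\bm A_t^{-1}}^2$. Summing and using $\|\bm x_t-\bm u\|_{\bm A_t}^2=\|\bm x_t-\bm u\|_{\bm A_{t-1}}^2+\|\bm x_t-\bm u\|_{\Delta_t}^2$ with $\Delta_t\coloneqq\bm A_t-\bm A_{t-1}$ telescopes the quadratic differences into $\|\bm x_1-\bm u\|_{\bm A_0}^2+\sum_t\|\bm x_t-\bm u\|_{\Delta_t}^2$, up to discarding the nonpositive final term.

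The crucial point is that $\Delta_t$ on $S_1$- and $S_2$-rounds is engineered precisely to cancel the corresponding negative quadratic in the per-round bound: $\|\bm x_t-\bm u\|_{\Delta_t}^2=\lambda\|\bm x_t-\bm u\|^2$ on $S_1$ and $\|\bm x_t-\bm u\|_{\Delta_t}^2=\gamma\pro{\nabla f_t(\bm x_t)}{\bm x_t-\bm u}^2$ on $S_2$, so (after multiplying the regret bound by $2$) both cancel exactly. What survives is the initial term $\|\bm x_1-\bm u\|_{\bm A_0}^2\le\varepsilon D^2=\sqrt{2}\,GD$, the contaminated contribution $\sum_{t\in U}\|\bm x_t-\bm u\|_{\Delta_t}^2\le(GD/\sqrt{2})\sum_{i=1}^k i^{-1/2}=O(GD\sqrt{k})$, and the gradient sum $\sum_t\|\nabla f_t(\bm x_t)\|_{\bm A_t^{-1}}^2$, which Lemma~\ref{lem:gradbound} bounds by $O((G^2/\lambda+d/\gamma)\log T+GD\sqrt{k})$. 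Dividing through by $2$ then yields the claimed regret bound.

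The main obstacle is arranging the three round types into a single telescoping identity where the cancellations match up cleanly; in particular, verifying that the specific choice $\Delta_t=(G/(D\sqrt{2|[t]\cap U|}))\bm I_d$ on $U$-rounds delivers the right $O(GD\sqrt{k})$ contribution both through the operator-norm bound $\|\bm x_t-\bm u\|_{\Delta_t}^2\le\|\Delta_t\|_{\mathrm{op}}D^2$ and through the inverse-norm accounting in Lemma~\ref{lem:gradbound}, while simultaneously keeping the spectrum of $\bm A_T$ small enough that the $\log(|\bm A_T|/|\bm A_0|)$ factor controlling $S_2$-rounds stays at $O(d\log T)$.
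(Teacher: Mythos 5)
Your proposal is correct and follows essentially the same route as the paper's proof: the same round-type-dependent per-round inequalities, the same Newton-step/generalized-projection telescoping in which $\bm A_t-\bm A_{t-1}$ cancels the negative quadratics on $S_1$- and $S_2$-rounds and contributes $O(GD\sqrt{k})$ on $U$-rounds, and the same invocation of Lemma~\ref{lem:gradbound} for the $\sum_t\|\nabla f_t(\bm x_t)\|_{\bm A_t^{-1}}^2$ term. The paper merely writes the telescoping identity in the equivalent form of the law of cosines rather than splitting $\|\bm x_t-\bm u\|_{\bm A_t}^2$ via $\Delta_t$ explicitly.
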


\begin{proof}
When $t\in S_1$, from the definition of strong convexity,
\begin{gather}
    2(f_t(\bm x_t)-f_t(\bm x^\ast))
  \leq2\pro{\nabla f_t(\bm x_t)}{\bm x_t-\bm x^\ast}-\lambda\|\bm x_t-\bm x^\ast\|^2\label{ineq:s1}
\end{gather}
holds.
When $t\in S_2$, from Lemma~\ref{ecineq},
\begin{gather}
    2(f_t(\bm x_t)-f_t(\bm x^\ast))
  \leq2\pro{\nabla f_t(\bm x_t)}{\bm x_t-\bm x^\ast}-\gamma(\pro{\nabla f(\bm x_t)}{\bm x_t-\bm x^\ast})^2\label{ineq:s2}
\end{gather}
holds.
When $t\in U$, since $f_t$ is convex,
\begin{gather}
    2(f_t(\bm x_t)-f_t(\bm x^\ast))
  \leq2\pro{\nabla f_t(\bm x_t)}{\bm x_t-\bm x^\ast}-\frac{G}{D\sqrt{2|[t]\cap U|}}\|\bm x_t-\bm x^\ast\|^2+\frac{G}{D\sqrt{2|[t]\cap U|}}\|\bm x_t-\bm x^\ast\|^2\label{ineq:u}
\end{gather}
holds.
From the update rule of $\bm A_t$, inequalities~(\ref{ineq:s1}),~(\ref{ineq:s2}), and (\ref{ineq:u}) can be combined into one inequality
\[
    2(f_t(\bm x_t)-f_t(\bm x^\ast))
  \leq2\pro{\nabla f_t(\bm x_t)}{\bm x_t-\bm x^\ast}-\pro{(\bm A_t-\bm A_{t-1})(\bm x_t-\bm x^\ast)}{\bm x_t-\bm x^\ast}+\frac{G1_U(t)}{D\sqrt{2|[t]\cap U|}}\|\bm x_t-\bm x^\ast\|^2,
\]
where $1_U$ is the indicator function, i.e., $1_U(t)=1$ if $t\in U$, and $1_U(t)=0$ otherwise.
The first and second terms in the right-hand side can be bounded as follows:
\begin{align*}
  &2\pro{\nabla f_t(\bm x_t)}{\bm x_t-\bm x^\ast}-\pro{(\bm A_t-\bm A_{t-1})(\bm x_t-\bm x^\ast)}{\bm x_t-\bm x^\ast}\\
  &=2\pro{\bm A_t(\bm y_{t+1}-\bm x_t)}{\bm x^\ast-\bm x_t}-\|\bm x_t-\bm x^\ast\|_{\bm A_{t}}^2+\|\bm x_t-\bm x^\ast\|_{\bm A_{t-1}}^2\\
  &=\|\bm y_{t+1}-\bm x_t\|_{\bm A_t}^2-\|\bm y_{t+1}-\bm x^\ast\|_{\bm A_t}^2+\|\bm x_t-\bm x^\ast\|_{\bm A_{t-1}}^2\\
  &\leq\|\bm y_{t+1}-\bm x_t\|_{\bm A_t}^2-\|\bm x_{t+1}-\bm x^\ast\|_{\bm A_t}^2+\|\bm x_t-\bm x^\ast\|_{\bm A_{t-1}}^2.
\end{align*}
The first equality is from the algorithm, the second equality is from the law of cosines, and the last inequality is from the nonexpansiveness of projection.
Therefore, we have
\[
    2(f_t(\bm x_t)-f_t(\bm x^\ast))
  \leq\|\bm y_{t+1}-\bm x_t\|_{\bm A_t}^2-\|\bm x_{t+1}-\bm x^\ast\|_{\bm A_t}^2+\|\bm x_t-\bm x^\ast\|_{\bm A_{t-1}}^2+\frac{G1_U(t)}{D\sqrt{2|[t]\cap U|}}\|\bm x_t-\bm x^\ast\|^2.
\]
By summing up from $t=1$ to $T$, we can bound regret as follows:
\begin{align*}
  2R_T
  &\leq\sum_{t=1}^T\|\bm y_{t+1}-\bm x_t\|_{\bm A_t}^2+\|\bm x_1-\bm x^\ast\|_{\bm A_{0}}^2+\sum_{t\in U}\frac{G}{D\sqrt{2|[t]\cap U|}}\|\bm x_t-\bm x^\ast\|^2\\
  &\leq\sum_{t=1}^T\|\nabla f_t(\bm x_t)\|_{\bm A_t^{-1}}^2+D^2\varepsilon+\frac{GD}{\sqrt2}\sum_{i=1}^{k}\frac1{\sqrt i}\\
  &\leq\sum_{t\in S_1}\|\nabla f_t(\bm x_t)\|_{\bm A_t^{-1}}^2+\sum_{t\in S_2}\|\nabla f_t(\bm x_t)\|_{\bm A_t^{-1}}^2+\sum_{t\in U}\|\nabla f_t(\bm x_t)\|_{\bm A_t^{-1}}^2+\sqrt2GD(\sqrt{k}+1).
\end{align*}

From Lemma~\ref{lem:gradbound}, we can get
\begin{align*}
  2R_T
  &\leq\frac{G^2}{\lambda}\log\left(1+\frac{\lambda D}{\sqrt2G}|S_1|\right)+\frac d\gamma\log\left(1+\frac{\lambda D}{\sqrt2G}|S_1|+\frac{\gamma GD}{\sqrt2}|S_2|+\sqrt k\right)+2\sqrt2GD\sqrt {k+1}\\
  &=O\left(\left(\frac{G^2}\lambda+\frac d\gamma\right)\log T+GD\sqrt k\right).
\end{align*}
\end{proof}

The key point of Theorem~\ref{thm:hybrid} is that the second term of the regret upper bound is proportional to $\sqrt k$.
Compared with Corollary~\ref{ecmetagradmalerusc}, we can see that additional information improves the regret upper bound.

\begin{remark}
    Algorithm~\ref{alg:hybrid} is written in a general form, and it is better to set $S_2=\emptyset$ in the contaminated strongly convex case.
    This is because Algorithm~\ref{alg:hybrid} needs $O(d^3)$ computation to calculate $\bm A_t^{-1}$ if $S_2$ is nonempty.
    When $S_1=\emptyset$ or $S_2=\emptyset$, the regret bound in Theorem~\ref{thm:hybrid} is reduced to $O((d/\gamma)\log T+GD\sqrt k)$ or $O((G^2/\lambda)\log T+GD\sqrt k)$ respectively.
\end{remark}

\begin{remark}
    As mentioned in Remark~\ref{rem:prior}, the algorithms analyzed in this section need information that is not always available in the real world.
    Therefore, the improved regret bound in Theorem~\ref{thm:hybrid} is theoretical, and regret bounds for universal algorithms explained in Section~\ref{upperbounds} are more important in real applications.
    However, the algorithm in this section has the notable advantage that its regret upper bounds match the lower bounds.
\end{remark}

\section{Conclusion}\label{conclusion}

In this paper, we proposed a problem class for OCO, namely contaminated OCO, the property whose objective functions change in time steps.
On this regime, we derived some upper bounds for existing and proposed algorithms and some lower bounds of regret.
While we successfully obtained optimal upper bounds with additional information of the function class of the last revealed objective function, 
there are still gaps of $O(\sqrt{d \log T})$ or $O(\sqrt{\log T})$ between the upper bound and the lower bound without additional information.
One natural future research direction is to fill these gaps. 
We believe there is room for improvement in the upper bounds and the lower bounds seem tight. 
Indeed, lower bounds in this study interpolate well between tight bounds for general OCO and for (restricted) $\mathcal{F}$-OCO.

\subsubsection*{Acknowledgments}
Portions of this research were conducted during visits by the first author, Kamijima, to NEC Corporation.

\bibliographystyle{abbrvnat}
\bibliography{ref}

\newpage
\appendix
\onecolumn

\section{Existing Algorithms and Known Regret Bounds}\label{algorithms}

This section introduces existing algorithms for OCO and their regret bounds.

\subsection{OGD Algorithm}\label{ogdalg}

In OCO, one of the most fundamental algorithms is online gradient descent (OGD), which is shown in Algorithm \ref{alg:ogd}.
An action $\bm x_t$ is updated by using the gradient of the point and projected onto the feasible region $\mathcal X$ in each step.
If all the objective functions are convex and learning rates are set $\Theta(1/\sqrt t)$, the regret is bounded by $O(\sqrt T)$ \citep{zinkevich03}, 
and if all the objective functions are $\lambda$-strongly convex and learning rates are set $\Theta(1/t)$, the regret is bounded by $O((1/\lambda)\log T)$ \citep{hazan06}.

\subsection{ONS Algorithm}\label{onsalg}

If all the objective functions are $\alpha$-exp-concave, ONS, shown in Algorithm \ref{alg:ons}, works well.
This is an algorithm proposed by \citet{hazan06} as an online version of the offline Newton method.
This algorithm needs parameters $\gamma,\varepsilon>0$, and if $\gamma=(1/2)\min\{1/(GD),\alpha\}$ and $\varepsilon=1/(\gamma^2D^2)$, then the regret is bounded by $O((d/\alpha)\log T)$.

\begin{algorithm}[b]
    \caption{Online Gradient Descent \citep{zinkevich03}}
    \label{alg:ogd}
    \begin{algorithmic}[1]
    \REQUIRE convex set $\mathcal X\subset\R^d$, $T$, $\bm x_1\in\mathcal X$, parameters $\eta_t$
    \FOR{$t=1$ {\bfseries to} $T$}
    \STATE Play $\bm x_t$ and observe cost $f_t(\bm x_t)$.
    \STATE Gradient step and projection:
    \[\bm y_{t+1}=\bm x_t-\eta_t\nabla f_t(\bm x_t),\]
    \[\bm x_{t+1}=\Pi_{\mathcal X}(\bm y_{t+1})\coloneqq\argmin_{\bm x\in\mathcal X}\{\|\bm y_{t+1}-\bm x\|^2\}.\]
    \ENDFOR
    \end{algorithmic}
\end{algorithm}

\begin{algorithm}[tb]
    \caption{Online Newton step \citep{hazan06}}
    \label{alg:ons}
    \begin{algorithmic}[1]
    \REQUIRE convex set $\mathcal X\subset\R^d$, $T$, $\bm x_1\in\mathcal X$, parameters $\gamma,\varepsilon>0$, $\bm A_0=\varepsilon \bm I_d$
    \FOR{$t=1$ {\bfseries to} $T$}
    \STATE Play $\bm x_t$ and observe cost $f_t(\bm x_t)$.
    \STATE Rank-1 update: $\bm A_t=\bm A_{t-1}+\nabla f_t(\bm x_t)(\nabla f_t(\bm x_t))^\top$.
    \STATE Newton step and generalized projection:
    \[\bm y_{t+1}=\bm x_t-\gamma^{-1}\bm A_t^{-1}\nabla f_t(\bm x_t),\]
    \[\bm x_{t+1}=\Pi_{\mathcal X}^{\bm A_t}(\bm y_{t+1}).\]
    \ENDFOR
    \end{algorithmic}
\end{algorithm}

\subsection{Universal Algorithms}\label{sec:universal}

In real-world applications, it may be unknown which function class the objective functions belong to.
To cope with such cases, many universal algorithms have been developed.
Most universal algorithms are constructed with two types of algorithms: a meta-algorithm and expert algorithms.
Each expert algorithm is an online learning algorithm and not always universal.
In each time step, expert algorithms update $\bm x_t^i$, and a meta-algorithm integrates these outputs in some way, such as a convex combination.
In the following, we explain three universal algorithms: multiple eta gradient algorithm (MetaGrad) \citep{erven16}, multiple sub-algorithms and learning rates (Maler) \citep{wang20}, and universal strategy for online convex optimization (USC) \citep{zhang22}.

First, MetaGrad is an algorithm with multiple experts, each with a different parameter $\eta$ as shown in Algorithm~\ref{alg:metagradmaster} and \ref{alg:metagradslave}.
In contrast to nonuniversal algorithms that need to set parameters beforehand depending on the property of objective functions, MetaGrad sets multiple $\eta$ so that users do not need prior knowledge. 
It is known that MetaGrad achieves $O(\sqrt{T\log\log T}),~O((d/\lambda)\log T)$ and $O((d/\alpha)\log T)$ regret bounds for convex, $\lambda$-strongly convex and $\alpha$-exp-concave objective functions respectively.

\begin{algorithm}[tb]
    \caption{MetaGrad Master~\citep{erven16}}
    \label{alg:metagradmaster}
    \begin{algorithmic}[1]
    \REQUIRE $T$, $G$, $D$, $C=1+1/(1+\lceil(1/2)\log_2T\rceil)$
    \STATE Set $\eta_i=2^{-i}/(5GD),~\pi_1^{\eta_i}=C/((i+1)(i+2))$ for $i=0,1,\ldots,\lceil(1/2)\log_2T\rceil$.
    \FOR{$t=1$ {\bfseries to} $T$}
    \STATE Get prediction $\bm x_t^{\eta_i}$ of slave for each $i$.
    \STATE Play $\bm x_t$:
    \[
        \bm x_t=\frac{\sum_i\pi_t^{\eta_i}\eta_i \bm x_t^{\eta_i}}{\sum_i\pi_t^{\eta_i}\eta_i}.
    \]
    \STATE Update for each $i$:
    \[
        \ell_t^{\eta_i}(\bm x_t^{\eta_i})=-{\eta_i}\langle \bm x_t-\bm x_t^{\eta_i},\nabla f_t(\bm x_t)\rangle+\eta_i^2(\langle \bm x_t-\bm x_t^{\eta_i},\nabla f_t(\bm x_t)\rangle)^2,
    \]
    \[
        \pi_{t+1}^{\eta_i}=\frac{\pi_t^{\eta_i}\e^{\ell_t^{\eta_i}(\bm x_t^{\eta_i})}}{\sum_i\pi_t^{\eta_i}\e^{\ell_t^{\eta_i}(\bm x_t^{\eta_i})}}.
    \]
    \ENDFOR
    \end{algorithmic}
\end{algorithm}

\begin{algorithm}[tb]
    \caption{MetaGrad Slave~\citep{erven16}}
    \label{alg:metagradslave}
    \begin{algorithmic}[1]
    \REQUIRE convex set $\mathcal X\subset\R^d$, $T$, $\eta$, $D$
    \STATE Set $\bm x_1^\eta=\bm 0$, $\bm \Sigma_1^\eta=D^2\bm I_d$
    \FOR{$t=1$ {\bfseries to} $T$}
    \STATE Issue $\bm x_t^\eta$ to master.
    \STATE Update:
    \[
        \bm \Sigma_{t+1}^\eta=\left(\frac1{D^2}\bm I_d+2\eta^2\sum_{s=1}^t\nabla f_t(\bm x_t)(\nabla f_t(\bm x_t))^\top\right)^{-1},
    \]
    \[
        \tilde {\bm x}_{t+1}^\eta=\bm x_t^\eta-\eta\bm \Sigma_{t+1}^\eta(1+2\eta^2\langle\nabla f_t(\bm x_t),\bm x_t^\eta-\bm x_t\rangle)\nabla f_t(\bm x_t),
    \]
    \[
        \bm x_{t+1}^\eta=\Pi_{\mathcal X}^{(\bm \Sigma_{t+1}^\eta)^{-1}}(\tilde {\bm x}_{t+1}^\eta).
    \]
    \ENDFOR
    \end{algorithmic}
\end{algorithm}

Second, Maler is an algorithm with three types of expert algorithms: a convex expert algorithm, strongly convex expert algorithms, and exp-concave expert algorithms, as shown in Algorithm~\ref{alg:malermeta} to \ref{alg:malersc}.
They are similar to OGD with $\Theta (1/\sqrt t)$ stepsize, OGD with $\Theta(1/t)$ stepsize, and ONS, respectively.
Expert algorithms contain multiple strongly convex expert algorithms and multiple exp-concave expert algorithms with multiple parameters $\eta$ like MetaGrad.
It is known that Maler achieves $O(\sqrt{T}),~O((1/\lambda)\log T)$ and $O((d/\alpha)\log T)$ regret bounds for convex, $\lambda$-strongly convex and $\alpha$-exp-concave objective functions respectively.

\begin{algorithm}[tb]
    \caption{Maler Meta~\citep{wang20}}
    \label{alg:malermeta}
    \begin{algorithmic}[1]
    \REQUIRE $T$, $G$, $D$, $C=1+1/(1+\lceil(1/2)\log_2T\rceil)$
    \STATE Set  $\eta^c=1/(2GD\sqrt T),~\eta_i=2^{-i}/(5GD)$ for $i=0,1,\ldots,\lceil(1/2)\log_2T\rceil$.
    \STATE Set $\pi^c=1/3,~\pi_1^{\eta_i,\ell}=\pi_1^{\eta_i,s}=C/(3(i+1)(i+2))$ for $i=0,1,\ldots,\lceil(1/2)\log_2T\rceil$.
    \FOR{$t=1$ {\bfseries to} $T$}
    \STATE Get predictions $\bm x^c_t$ from Algorithm~\ref{alg:malerconv} and $\bm x_t^{\eta_i,\ell}, \bm x_t^{\eta_i,s}$ from Algorithms~\ref{alg:malerec} and \ref{alg:malersc} for all $i$.
    \STATE Play 
    \[
    \bm x_t=\frac{\pi_t^c\eta^c\bm x_t^c+\sum_i(\pi_t^{\eta_i,s}\eta_i\bm x_t^{\eta_i,s}+\pi_t^{\eta_i,\ell}\eta_i\bm x_t^{\eta_i,\ell})}{\pi_t^c\eta^c+\sum_i(\pi_t^{\eta_i,s}\eta_i+\pi_t^{\eta_i,\ell}\eta_i)}.
    \]
    \STATE Observe gradient $\nabla f_t(\bm x_t)$ and send it to all experts.
    \STATE Update weights:
    \[
    \pi_{t+1}^c=\frac{\pi_t^c\e^{-c_t(\bm x_t^c)}}{\Phi_t},
    \]
    \[
    \pi_{t+1}^{\eta_i,s}=\frac{\pi_t^{\eta_i,s}\e^{-s_t^{\eta_i}(\bm x_t^{\eta_i,s})}}{\Phi_t}~\text{for~each}~i,
    \]
    \[
    \pi_{t+1}^{\eta_i,\ell}=\frac{\pi_t^{\eta_i,\ell}\e^{-\ell_t^{\eta_i}(\bm x_t^{\eta_i,\ell})}}{\Phi_t}~\text{for~each}~i,
    \]
    where
    \[
    \Phi_t=\sum_i(\pi_1^{\eta_i,s}\e^{-\sum_{\tau=1}^ts_\tau^{\eta_i}(\bm x_\tau^{\eta_i,s})}+\pi_1^{\eta_i,\ell}\e^{-\sum_{\tau=1}^t\ell_\tau^{\eta_i}(\bm x_\tau^{\eta_i,\ell})})+\pi_1^c\e^{-\sum_{\tau=1}^tc_\tau(\bm x_\tau^c)},
    \]
    \[
    c_t(\bm x)=-\eta^c\pro{\nabla f_t(\bm x_t)}{\bm x_t-\bm x}+(\eta^cGD)^2,
    \]
    \[
    s_t^\eta(\bm x)=-\eta\pro{\nabla f_t(\bm x_t)}{\bm x_t-\bm x}+\eta^2G^2\|\bm x_t-\bm x\|^2,
    \]
    \[
    \ell_t^\eta(\bm x)=-\eta\pro{\nabla f_t(\bm x_t)}{\bm x_t-\bm x}+\eta^2(\pro{\nabla f_t(\bm x_t)}{\bm x_t-\bm x})^2.
    \]
    \ENDFOR
    \end{algorithmic}
\end{algorithm}

\begin{algorithm}[tb]
    \caption{Maler Convex Expert~\citep{wang20}}
    \label{alg:malerconv}
    \begin{algorithmic}[1]
    \REQUIRE convex set $\mathcal X\subset\R^d$, $T$, $G$, $D$, $\eta^c$
    \STATE Set $\bm x_t^c=\bm0$.
    \FOR{$t=1$ {\bfseries to} $T$}
    \STATE Send $\bm x_t^c$ to Algorithm~\ref{alg:malermeta}.
    \STATE Receive gradient $\nabla f_t(\bm x_t)$ from Algorithm~\ref{alg:malermeta}.
    \STATE Update:
    \[
    \bm x_{t+1}^c=\Pi_{\mathcal X}\left(\bm x_t^c-\frac D{\eta^cG\sqrt t}\nabla c_t(\bm x_t^c)\right).
    \]
    \ENDFOR
    \end{algorithmic}
\end{algorithm}

\begin{algorithm}[tb]
    \caption{Maler Exp-concave Expert~\citep{wang20}}
    \label{alg:malerec}
    \begin{algorithmic}[1]
    \REQUIRE convex set $\mathcal X\subset\R^d$, $T$, $D$, $\eta$
    \STATE Set $\bm x_t^{\eta,\ell}=\bm0,~\beta=1/2,~\bm\Sigma_1=(1/(\beta^2D^2))\bm I_d$.
    \FOR{$t=1$ {\bfseries to} $T$}
    \STATE Send $\bm x_t^{\eta,\ell}$ to Algorithm~\ref{alg:malermeta}.
    \STATE Receive gradient $\nabla f_t(\bm x_t)$ from Algorithm~\ref{alg:malermeta}.
    \STATE Update:
    \[
    \bm \Sigma_{t+1}=\bm\Sigma_t+\nabla\ell_t^\eta(\bm x_t^{\eta,\ell})(\nabla\ell_t^\eta(\bm x_t^{\eta,\ell}))^\top,
    \]
    \[
    \bm x_{t+1}^{\eta,\ell}=\Pi_{\mathcal X}^{\bm\Sigma_{t+1}}\left(\bm x_t^{\eta,\ell}-\frac1\beta\bm\Sigma_{t+1}^{-1}\nabla \ell_t^\eta(\bm x_t^{\eta,\ell})\right).
    \]
    \ENDFOR
    \end{algorithmic}
\end{algorithm}

\begin{algorithm}[tb]
    \caption{Maler Strongly Convex Expert~\citep{wang20}}
    \label{alg:malersc}
    \begin{algorithmic}[1]
    \REQUIRE convex set $\mathcal X\subset\R^d$, $T$, $G$, $\eta$
    \STATE Set $\bm x_t^{\eta,s}=\bm0$.
    \FOR{$t=1$ {\bfseries to} $T$}
    \STATE Send $\bm x_t^{\eta,s}$ to Algorithm~\ref{alg:malermeta}.
    \STATE Receive gradient $\nabla f_t(\bm x_t)$ from Algorithm~\ref{alg:malermeta}.
    \STATE Update:
    \[
    \bm x_{t+1}^{\eta,s}=\Pi_{\mathcal X}\left(\bm x_t^{\eta,s}-\frac1{2\eta^2G^2t}\nabla s_t^\eta(\bm x_t^{\eta,s})\right).
    \]
    \ENDFOR
    \end{algorithmic}
\end{algorithm}

Finally, USC is an algorithm with many expert algorithms, as shown in Algorithm~\ref{alg:usc}.
In contrast to Maler, which contains OGD and ONS as expert algorithms, USC contains more expert algorithms.
To integrate many experts, USC utilizes Adapt-ML-Prod \citep{gaillard14} as a meta-algorithm, which realizes universal regret bound.
Concerning the regret bound of USC, there is a theorem as follows.

\begin{theorem}\citep{zhang22}\label{zhangthm1}
Let $\mathcal E$ be a set of expert algorithms and $\bm x_t^i$ be an output of $i$th algorithm in $t$ time step. Then,
\begin{align*}
&\sum_{t=1}^T(f_t(\bm x_t)-f_t(\bm x_t^i))
\leq\sum_{t=1}^T\pro{\nabla f_t(\bm x_t)}{\bm x_t-\bm x_t^i}
\leq 4\Gamma GD+\frac\Gamma{\sqrt{\log|\mathcal E|}}\sqrt{4G^2D^2+\sum_{t=1}^T(\pro{\nabla f_t(\bm x_t)}{\bm x_t-\bm x_t^i})^2},
\end{align*}
where $\Gamma=O(\log\log T)$.
\end{theorem}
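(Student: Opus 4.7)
The plan is to split the statement into a convexity step, which handles the first inequality, and a direct appeal to the second-order regret guarantee of the meta-algorithm Adapt-ML-Prod, which handles the second inequality.

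For the first inequality, since each $f_t$ is convex and differentiable, the subgradient inequality gives $f_t(\bm x_t^i)\geq f_t(\bm x_t)+\pro{\nabla f_t(\bm x_t)}{\bm x_t^i-\bm x_t}$, so $f_t(\bm x_t)-f_t(\bm x_t^i)\leq \pro{\nabla f_t(\bm x_t)}{\bm x_t-\bm x_t^i}$. Summing over $t$ yields the claim and reduces the problem to bounding the \emph{linearized} regret of USC against an arbitrary expert $i$.

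For the second inequality, I would set $\ell_t^j := \pro{\nabla f_t(\bm x_t)}{\bm x_t^j}$ and view this as a bounded linear loss for each expert $j\in\mathcal E$. USC aggregates experts via Adapt-ML-Prod \citep{gaillard14}, whose second-order regret guarantee bounds, for each fixed $i$, the meta-algorithm's excess loss over expert $i$ by a square-root of the sum of squared per-round excess losses, plus an $O(\log\log T)$ additive overhead. Identifying the aggregated loss with $\pro{\nabla f_t(\bm x_t)}{\bm x_t}$ turns each per-round excess loss into exactly $\pro{\nabla f_t(\bm x_t)}{\bm x_t-\bm x_t^i}$, which produces the variance sum appearing in the theorem; the constants $4GD$ and $4G^2D^2$ come from the per-round bound $|\pro{\nabla f_t(\bm x_t)}{\bm x_t-\bm x_t^i}|\leq GD$ and from absorbing the initial potential of Adapt-ML-Prod.

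The main obstacle is producing the precise form of the second-order term: Adapt-ML-Prod's native bound has a specific dependence on $\log|\mathcal E|$, and reshaping it so that the coefficient in front of the square root appears as $\Gamma/\sqrt{\log|\mathcal E|}$ with $\Gamma=O(\log\log T)$ requires tracking the normalization of its weights and the potential argument carefully. The additive $4G^2D^2$ inside the square root and the additive $4\Gamma GD$ outside it come from padding the first round and from the anytime/doubling wrapper responsible for the $\log\log T$ factor. Once this bookkeeping is carried out, no further ideas are required beyond the two ingredients above: convexity for the first inequality and the second-order experts bound for the second.
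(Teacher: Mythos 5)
The paper does not actually prove this theorem: it is imported verbatim from \citet{zhang22} and used downstream as a black box in the proofs of Corollaries~\ref{ecmetagradmalerusc} and \ref{scmetagradmalerusc}. Your reconstruction follows the same route as the original source. The first inequality is exactly the linearization step by convexity, and the second is the second-order regret bound of Adapt-ML-Prod applied to the surrogate linear losses $\pro{\nabla f_t(\bm x_t)}{\bm x_t^j}$, rescaled into $[0,1]$ via the per-round bound $|\pro{\nabla f_t(\bm x_t)}{\bm x_t-\bm x_t^j}|\leq GD$, which is indeed where the $4GD$ and $4G^2D^2$ constants originate; the identification of the meta-algorithm's loss with $\pro{\nabla f_t(\bm x_t)}{\bm x_t}$ is licensed by linearity, since USC plays the convex combination $\bm x_t=\sum_j p_t^j\bm x_t^j$. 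This is the right decomposition and the right key lemma. The one caveat is that your write-up defers the quantitative heart of the matter --- extracting the coefficient $\Gamma/\sqrt{\log|\mathcal E|}$ with $\Gamma=O(\log\log T)$ from Adapt-ML-Prod's self-confident learning-rate and potential argument --- to ``bookkeeping,'' so as written it is a correct proof outline that leans on the cited guarantee rather than a self-contained proof; since the theorem is itself a citation, that is an acceptable level of detail here.
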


In USC, expert algorithms are chosen so that $|\mathcal E|=O(\log T)$ holds.
This theorem holds without assuming exp-concavity or strong convexity.
In addition, it is known that USC achieves $O(\sqrt{L_T\log\log T})$, $O((1/\lambda)\cdot(\min\{\log L_T, \log V_T\}+\log\log T))$ and $O((1/\alpha)(d\min\{\log L_T,\log V_T\}+\log\log T))$ regret bounds for convex, $\lambda$-strongly convex and $\alpha$-exp-concave objective functions respectively, where $L_T:=\min_{\bm x\in\mathcal X}\sum_{t=1}^Tf_t(\bm x)=O(T)$, $V_T:=\sum_{t=1}^T\max_{\bm x\in\mathcal X}\|\nabla f_t(\bm x)-\nabla f_{t-1}(\bm x)\|_2^2=O(T)$.

\begin{algorithm}[tb]
    \caption{Universal Strategy for Online Convex Optimization (USC)~\citep{zhang22}}
    \label{alg:usc}
    \begin{algorithmic}[1]
    \REQUIRE $\mathcal A_{str}$, $\mathcal A_{exp}$, and $\mathcal A_{con}$, which are sets of algorithms designed for strongly convex functions, exp-concave functions and general convex functions respectively; $\mathcal P_{str}$ and $\mathcal P_{exp}$, which are sets of parameters of strong convexity and exp-concavity respectively.
    \STATE Initialize $\mathcal E=\emptyset$.
    \FOR{each algorithm $A\in\mathcal A_{str}$}
    \FOR{each $\lambda\in\mathcal P_{str}$}
    \STATE Create an expert $E(A,\lambda)$.
    \STATE Update $\mathcal E=\mathcal E\cup E(A,\lambda)$.
    \ENDFOR
    \ENDFOR
    \FOR{each algorithm $A\in\mathcal A_{exp}$}
    \FOR{each $\alpha\in\mathcal P_{exp}$}
    \STATE Create an expert $E(A,\alpha)$.
    \STATE Update $\mathcal E=\mathcal E\cup E(A,\alpha)$.
    \ENDFOR
    \ENDFOR
    \FOR{each algorithm $A\in\mathcal A_{con}$}
    \STATE Create an expert $E(A)$.
    \STATE Update $\mathcal E=\mathcal E\cup E(A)$.
    \ENDFOR
    \FOR{$t=1$ {\bfseries to} $T$}
    \STATE Calculate the weight $p_t^i$ of each expert $E^i$ by
    \[
    p_t^i=\frac{\eta_{t-1}^iw_{t-1}^i}{\sum_{j=1}^{|\mathcal E|}\eta_{t-1}^jw_{t-1}^j}.
    \]
    \STATE Receive $\bm x_t^i$ from each expert $E^i\in\mathcal E$.
    \STATE Output the weighted average $\bm x_t=\sum_{i=1}^{|\mathcal E|}p_t^i\bm x_t^i$.
    \STATE Observe the loss function $f_t(\cdot)$.
    \STATE Send the function $f_t(\cdot)$ to each expert $E^i\in\mathcal E$.
    \ENDFOR
    \end{algorithmic}
\end{algorithm}

\section{MISSING PROOFS}\label{missingproofs}

In this section, we explain missing proofs.

\subsection{Proof of the Exp-Concavity of the Function in Example \ref{logistic}}\label{sec:exproof}

In this subsection, we present the proof of the exp-concavity of the function$f_t$ in Example \ref{logistic} in the case $a_{t,i}=a_t,~b_{t,i}=b_t$.
Before the proof, we introduce the following lemma.

\begin{lemma}
\citep{hazan16}\label{concave}
A twice-differentiable function $f\colon\R^d\to\R$ is $\alpha$-exp-concave at $\bm x$ if and only if
\[\nabla^2f(\bm x)\succeq\alpha\nabla f(\bm x)\nabla f(\bm x)^\top.\]
\end{lemma}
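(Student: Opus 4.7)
The plan is to unwind the definition of $\alpha$-exp-concavity into a statement about the Hessian of the exponential transform and then compute. By definition, $f$ is $\alpha$-exp-concave at $\bm x$ iff $g(\bm x) \coloneqq \exp(-\alpha f(\bm x))$ is concave at $\bm x$. Since $g$ inherits twice-differentiability from $f$, concavity at $\bm x$ is equivalent to $\nabla^2 g(\bm x) \preceq 0$, so the whole task reduces to computing $\nabla^2 g(\bm x)$ and rearranging.

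First I would compute $\nabla g(\bm x) = -\alpha \exp(-\alpha f(\bm x))\, \nabla f(\bm x)$ using the chain rule. Then, applying the product rule to differentiate again,
\[
\nabla^2 g(\bm x) = \alpha \exp(-\alpha f(\bm x)) \Bigl[ \alpha\, \nabla f(\bm x) \nabla f(\bm x)^\top - \nabla^2 f(\bm x) \Bigr].
\]
Since the scalar factor $\alpha \exp(-\alpha f(\bm x))$ is strictly positive, the condition $\nabla^2 g(\bm x) \preceq 0$ is equivalent to $\alpha\, \nabla f(\bm x) \nabla f(\bm x)^\top - \nabla^2 f(\bm x) \preceq 0$, which in turn rearranges exactly to $\nabla^2 f(\bm x) \succeq \alpha\, \nabla f(\bm x) \nabla f(\bm x)^\top$. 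This gives both directions of the ``if and only if'' simultaneously.

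There is no substantial obstacle here: the argument is a one-line chain-rule calculation followed by factoring out a positive scalar. The only thing worth being careful about is the sign bookkeeping when differentiating $\exp(-\alpha f(\bm x))$ twice, since the two minus signs combined with the outer derivative give the rank-one term the opposite sign from the $\nabla^2 f$ term, which is precisely what produces the inequality in the stated direction.
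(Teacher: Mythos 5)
Your computation is correct and is the standard argument: the paper itself does not prove this lemma but cites it from \citet{hazan16}, and the cited proof is exactly this chain-rule calculation showing $\nabla^2 \e^{-\alpha f(\bm x)} = \alpha \e^{-\alpha f(\bm x)}\bigl[\alpha \nabla f(\bm x)\nabla f(\bm x)^\top - \nabla^2 f(\bm x)\bigr]$ and factoring out the positive scalar. The only implicit assumption worth noting is $\alpha>0$, which is standard in the definition of exp-concavity and needed for the scalar factor to be strictly positive.
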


Using this lemma, we can check the exp-concavity of the function $f_t$.

\begin{proof}
By differentiating $f_t$, we have
\begin{align*}
\nabla f_t(\bm x)&=-\frac{b_t\bm a_t}{1+\exp(b_t\pro{\bm a_t}{\bm x})},~
\nabla^2f_t(\bm x)=\frac{b_t^2\bm a_t\bm a_t^\top\exp(b_t\pro{\bm a_t}{\bm x})}{(1+\exp(b_t\pro{\bm a_t}{\bm x}))^2}.
\end{align*}
For all $\bm v\in\R^d$,
\[
\bm v^\top(\nabla^2f_t(\bm x)-\alpha\nabla f_t(\bm x)\nabla f_t(\bm x)^\top)\bm v
=b_t^2\pro{\bm a_t}{\bm v}^2\frac{\exp(b_t\pro{\bm a_t}{\bm x})-\alpha}{(1+\exp(b_t\pro{\bm a_t}{\bm x}))^2}
\]
holds, and combined with Lemma \ref{concave}, $f_t$ is $\exp(-\|\bm a_t\|)$-exp-concave.
\end{proof}

\subsection{Proof of Proposition \ref{onsupper}}\label{onsupperproof}

This subsection presents the proof of Proposition \ref{onsupper}.

\begin{proof}
Let $\bm x^\ast\in\argmin_{\bm x\in\mathcal X}\sum_{t=1}^Tf_t(\bm x)$. 
We can bound regret as follows:
\begin{align*}
R_T
&=\sum_{t=1}^T(f_t(\bm x_t)-f_t(\bm x^\ast))\\
&\leq\sum_{t=1}^T\left(\pro{\nabla f_t(\bm x_t)}{\bm x_t-\bm x^\ast}-\frac{\gamma_t}2(\pro{\nabla f_t(\bm x_t)}{\bm x_t-\bm x^\ast})^2\right)\\
&=\sum_{t=1}^T\left(\pro{\nabla f_t(\bm x_t)}{\bm x_t-\bm x^\ast}-\frac\gamma2(\pro{\nabla f_t(\bm x_t)}{\bm x_t-\bm x^\ast})^2\right)+\sum_{t=1}^T\frac{\gamma-\gamma_t}2(\pro{\nabla f_t(\bm x_t)}{\bm x_t-\bm x^\ast})^2\\
&\leq\sum_{t=1}^T\left(\pro{\nabla f_t(\bm x_t)}{\bm x_t-\bm x^\ast}-\frac\gamma2(\pro{\nabla f_t(\bm x_t)}{\bm x_t-\bm x^\ast})^2\right)+\sum_{t:\gamma_t<\gamma}\frac{\gamma-\gamma_t}2G^2D^2\\
&\leq\frac {2d}\gamma\log T+\frac14kGD,
\end{align*}
where $\gamma_t$ is defined in the same way as defined in Theorem \ref{ecupper}. 
The first inequality is from Lemma \ref{ecineq}.
In the last inequality, the first term is bounded by $(2d/\gamma)\log T$ because of the proof of ONS's regret bound by \citet{hazan16}.
The second term is bounded by $(1/4)kGD$ from $\gamma\leq1/(2GD)$ by definition of $\gamma$.
\end{proof}

\subsection{Proof of Theorem \ref{lower}}\label{lowerproof}

This subsection presents the proof of Theorem \ref{lower}.

\begin{proof}
Let $I_1$ and $I_2$ be instances used to prove lower bound $R_T=\Omega(g_1(T))$ for function class $\mathcal F$ and $R_k=\Omega(g_2(k))$ for convex objective functions, respectively, and $f_{i,t}~(i=1,2)$ be objective functions of $I_i$ at time step $t$, and $\mathcal X_i$ be sets which decision variables of $I_i$ belong to.
Here, take a set $\mathcal X$ so that there exist surjections $\phi_i\colon\mathcal X\to\mathcal X_i$.
For this $\mathcal X$, let $\tilde I_1$ be an instance whose objective function at time step $t$ is $f_{1,t}\circ\phi_1$ and $\tilde I_2$ be an instance whose objective function at time step $t$ is $f_{2,t}\circ\phi_2$ if $t\leq k$, and some function in $\mathcal F$ whose minimizer is the same as the minimizer of $\sum_{t=1}^kf_{2,t}$ otherwise.
For these instances, consider the case that instances $\tilde I_1$ and $\tilde I_2$ realize with probability $1/2$.
In this case, the expectation of regret satisfies
\begin{align*}
\mathbb E[R_T]
&=\frac12\Omega(g_1(T))+\frac12\Omega(g_2(k))\\
&=\Omega(g_1(T)+g_2(k)),
\end{align*}
for all OCO algorithms.
Therefore, Theorem \ref{lower} follows.
\end{proof}

\subsection{Proof of Proposition \ref{onslowerconvex}}\label{onslowerconvexproof}

This subsection presents the proof of Proposition \ref{onslowerconvex}.

\begin{proof}
Consider the instance as follows:
\[f_t(x)=v_tx,~x\in\mathcal X=[-D/2,D/2],~x_1=-D/2,\]
where
\[v_t=\begin{dcases}
(-1)^tG&t<t_1,\\
G&t\geq t_1,~x_{t_1}\geq0,\\
-G&t\geq t_1,~x_{t_1}<0,
\end{dcases}\]
and $t_1$ is a minimum natural number which satisfies $t_1\geq(1+\gamma G^2D/2)^{-1}T$. 
Then,
\begin{align}
    \min_{x\in\mathcal X}\sum_{t=1}^Tf_t(x)
    \leq(-T+t_1)\frac{GD}2
    \leq\left(-\frac{\gamma G^2D/2}{1+\gamma G^2D/2}T+1\right)\frac{GD}2.\label{eq:onsproof0}
\end{align}
The second inequality is from $t_1\leq(1+\gamma G^2D/2)^{-1}T+1$.
If $x_{t_1}\geq0$,
\begin{align}
    \sum_{t=1}^Tf_t(x_t)=G\sum_{t=1}^{t_1-1}(-1)^tx_t+G\sum_{t=t_1}^Tx_t.\label{eq:onsproof1}
\end{align}
For the first term, since
\[A_t=\varepsilon+G^2t\]
for all $t\in[T]$, and if $t<t_1$, we have
\[y_{t+1}=x_t-\gamma^{-1}(\varepsilon+G^2t)^{-1}(-1)^tG.\]
Now, $x_{t+1}$ is defined as
\[x_{t+1}=\begin{dcases}
\frac D2&y_{t+1}>\frac D2,\\
y_{t+1}&-\frac D2\leq y_{t+1}\leq\frac D2,\\
-\frac D2&y_{t+1}<-\frac D2,
\end{dcases}\]
and therefore, we get
\[x_t=\begin{dcases}
(-1)^t\frac D2&t\leq t_2,\\
(-1)^{t_2}\frac D2-\sum_{s=t_2}^{t-1}\frac{(-1)^sG}{\gamma(\varepsilon+G^2s)}&t>t_2,
\end{dcases}\]
where $t_2$ is a minimum time step $t$ which satisfies $G\gamma^{-1}(\varepsilon+G^2t)^{-1}<D$, i.e., $t>1/(\gamma GD)-\varepsilon/G^2$.
From this, for sufficiently large $T$ so that $t_2\leq t_1-2$, we have
\begin{align}
G\sum_{t=1}^{t_1-1}(-1)^tx_t
&=G\sum_{t=1}^{t_2}(-1)^t(-1)^t\frac D2+\sum_{t=t_2+1}^{t_1-1}(-1)^t\left((-1)^{t_2}\frac D2-\sum_{s=t_2}^{t-1}\frac{(-1)^sG}{\gamma(\varepsilon+G^2s)}\right)\notag\\
&\geq \frac{t_2-1}2GD+\frac{G^2}\gamma\sum_{t=t_2+1}^{t_1-1}\sum_{s=t_2}^{t-1}\frac{(-1)^{s+t+1}}{\varepsilon+G^2s}\notag\\
&=\frac{t_2-1}2GD+\frac{G^2}\gamma\sum_{s=t_2}^{t_1-2}\sum_{t=s+1}^{t_1-1}\frac{(-1)^{s+t+1}}{\varepsilon+G^2s}\notag\\
&\geq \frac{t_2-1}2GD-\frac{G^2}\gamma\sum_{s=t_2}^{t_1-2}\frac{1}{\varepsilon+G^2s}\notag\\
&\geq \frac{t_2-1}2GD-\frac{G^2}\gamma\int_{t_2-1}^{t_1-2}\frac{\dd s}{\varepsilon+G^2s}\notag\\
&=\frac{t_2-1}2GD-\frac1\gamma\log\frac{\varepsilon+(t_1-2)G^2}{\varepsilon+(t_2-1)G^2}\notag\\
&\geq-\frac1\gamma\log\left(1+\frac{G^2}{\varepsilon}T\right)\label{eq:onsproof2}.
\end{align}

Next, for the second term of equation~(\ref{eq:onsproof1}), if $x_{t_1}\geq0$ and $t\geq t_1$, we then have
\[y_{t+1}=x_t-\gamma^{-1}(\varepsilon+G^2t)^{-1}\geq x_t-\gamma^{-1}G^{-2}t_1^{-1}\]
and since $x_{t+1}\geq y_{t+1}$ holds from $y_{t+1}\leq x_t\leq D/2$, we have
\begin{align*}
y_{t+1}
&\geq x_{t_1}-(t-t_1)\gamma^{-1}G^{-2}t_1^{-1}\\
&\geq-(T-t_1)\gamma^{-1}G^{-2}t_1^{-1}\\
&=-\left(\frac T{t_1}-1\right)G^{-2}\gamma^{-1}\\
&\geq-\left(\frac T{(1+\gamma G^2D/2)^{-1}T}-1\right)G^{-2}\gamma^{-1}\\
&=-\frac D2.
\end{align*}
Therefore, from
\[x_t=x_{t_1}-\sum_{s=t_1}^{t-1}\gamma^{-1}(\varepsilon+G^2s)^{-1},\]
we have
\begin{align}
G\sum_{t=t_1}^Tx_t
&=G\sum_{t=t_1}^T\left(x_{t_1}-\sum_{s=t_1}^{t-1}\gamma^{-1}(\varepsilon+G^2s)^{-1}\right)\notag\\
&\geq-\frac 1{\gamma G}\sum_{t=t_1}^T\sum_{s=t_1}^{t-1}s^{-1}\notag\\
&=-\frac 1{\gamma G}\sum_{s=t_1}^{T-1}\sum_{t=s+1}^{T}s^{-1}\notag\\
&=-\frac 1{\gamma G}\sum_{s=t_1}^{T-1}\left(-1+\frac Ts\right)\notag\\
&\geq\frac {T-t_1-1}{\gamma G}-\frac T{\gamma G}\log\frac{T}{t_1}\notag\\
&\geq\frac{T-(1+\gamma G^2D/2)^{-1}T-2}{\gamma G}-\frac T{\gamma G}\log\frac{T}{(1+\gamma G^2D/2)^{-1}T}\notag\\
&= \frac TG\left(\frac{G^2D/2}{1+\gamma G^2D/2}-\gamma^{-1}\log(1+\gamma G^2D/2)\right)-\frac2{\gamma G}.\label{eq:onsproof3}
\end{align}

We can derive the same bound similarly in the case of $x_{t_1}<0$.

From inequality~(\ref{eq:onsproof0}), equality~(\ref{eq:onsproof1}), inequality~(\ref{eq:onsproof2}) and inequality~(\ref{eq:onsproof3}), we complete the proof:
\begin{align*}
R_T
&\geq -\frac1\gamma\log\left(1+\frac{G^2}{\varepsilon}T\right)+\frac TG\left(\frac{G^2D/2}{1+\gamma G^2D/2}-\gamma^{-1}\log(1+\gamma G^2D/2)\right)-\frac2{\gamma G}\\&\quad-\left(-\frac{\gamma G^2D/2}{1+\gamma G^2D/2}T+1\right)\frac{GD}2\\
&\geq T\left(\frac{G^2D}2-\gamma^{-1}\left(\gamma \frac{G^2D}2-\frac{(\gamma G^2D/2)^2}{2(1+\gamma G^2D/2)^2}\right)\right)-\frac1\gamma\log\left(1+\frac{G^2}{\varepsilon}T\right)-\frac2{\gamma G}-\frac{GD}2\\
&= \frac{\gamma G^2D/2}{2(1+\gamma G^2D/2)^2}T-\frac1\gamma\log\left(1+\frac{G^2}{\varepsilon}T\right)-\frac2{\gamma G}-\frac{G^2D}2\\
&=\Omega(T).
\end{align*}
The second inequality follows from the inequality $\log(1+\gamma G^2D/2)\leq\gamma G^2D/2-\frac{(\gamma G^2D/2)^2}{2(1+\gamma G^2D/2)^2}$ for any $\gamma>0$ by Taylor's theorem.
\end{proof}

\subsection{Proof of Theorem~\ref{wangthm1}}\label{sec:wangthm1proof}
This subsection proves that Theorem~\ref{wangthm1} holds for any $\bm x\in\mathcal X$.
Before stating the proof of Theorem~\ref{wangthm1}, we introduce two following lemmas.
Here, $c_t$, $\bm x_t^c$, $\ell_t^\eta$, $\bm x_t^{\eta,\ell}$, $s_t^\eta$, and $\bm x_t^{\eta,s}$ are introduced in Algorithm~\ref{alg:malermeta}.

\begin{lemma}\citep{wang20}\label{lem:wang1}
    For every grid point $\eta$, we have
    \begin{align}
        \sum_{t=1}^T(c_t(\bm x_t)-c_t(\bm x_t^c))&\leq\log 3+\frac14,\label{ineq:lemwang1}\\
        \sum_{t=1}^T(\ell_t^\eta(\bm x_t)-\ell_t^\eta(\bm x_t^{\eta,\ell}))&\leq2\log\left(\sqrt3\left(\frac12\log_2T+3\right)\right),\label{ineq:lemwang2}
    \end{align}
    and 
    \begin{align}
        \sum_{t=1}^T(s_t^\eta(\bm x_t)-s_t^\eta(\bm x_t^{\eta,s}))&\leq2\log\left(\sqrt3\left(\frac12\log_2T+3\right)\right).\label{ineq:lemwang3}
    \end{align}
\end{lemma}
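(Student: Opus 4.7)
The plan is a potential-function argument of Hedge/mixability type, tailored to the meta-algorithm in Algorithm~\ref{alg:malermeta}. Define
\[
\Phi_t \;:=\; \sum_e \pi_1^e \exp\!\Bigl(-\sum_{\tau=1}^{t} L_\tau^e(\bm x_\tau^e)\Bigr),
\]
where $e$ ranges over the single convex expert and the two families of $\eta_i$-indexed experts, and $L^e$ stands for $c$, $s^{\eta_i}$, or $\ell^{\eta_i}$ according to the expert type. This is exactly the quantity called $\Phi_t$ in the algorithm, and $\Phi_0=1$. The three bounds in the lemma will fall out of the single inequality $\Phi_T \leq 1$.

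The key step is to show $\Phi_t \leq \Phi_{t-1}$ for every $t$. Letting $p_t^e := \pi_1^e \exp(-\sum_{\tau<t} L_\tau^e(\bm x_\tau^e))/\Phi_{t-1}$ be the posterior weights (so $p_t^e = \pi_t^e$ from Algorithm~\ref{alg:malermeta}), this reduces to $\sum_e p_t^e \exp(-L_t^e(\bm x_t^e)) \leq 1$. I would apply the elementary inequality $e^{-x}\leq 1-x+x^2$ valid for $x\geq -1$, which is admissible because the grid choice $\eta_i \leq 1/(5GD)$ together with Assumptions on $D,G$ forces $|L_t^e(\bm x_t^e)|\leq 1$. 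The linear contribution across experts takes the form $\sum_e p_t^e \eta_e \pro{\nabla f_t(\bm x_t)}{\bm x_t-\bm x_t^e}$ (the shared $\eta$-weighting is exactly why the grid is tilted), and it vanishes because the meta's play is the tilted convex combination
\[
\bm x_t \;=\; \frac{\sum_e p_t^e\eta_e\bm x_t^e}{\sum_e p_t^e\eta_e},
\]
so $\sum_e p_t^e \eta_e(\bm x_t-\bm x_t^e)=\bm 0$. What remains is the quadratic residual $\sum_e p_t^e L_t^e(\bm x_t^e)^2$, and here I would verify, separately for each of the three loss types, that the built-in second-order terms $(\eta^cGD)^2$, $\eta^2G^2\|\bm x_t-\bm x\|^2$, and $\eta^2\pro{\nabla f_t(\bm x_t)}{\bm x_t-\bm x}^2$ inside $c$, $s^\eta$, $\ell^\eta$ dominate the corresponding squared-linear residuals (using $\|\bm x_t-\bm x_t^e\|\leq D$ and $\|\nabla f_t(\bm x_t)\|\leq G$). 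Combined with the linear cancellation this yields $\sum_e p_t^e\exp(-L_t^e(\bm x_t^e))\leq 1$, hence $\Phi_T\leq 1$ by induction.

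Given $\Phi_T\leq 1$, for any fixed expert $e$ we have $\Phi_T \geq \pi_1^e\exp(-\sum_\tau L_\tau^e(\bm x_\tau^e))$, and so $\sum_\tau L_\tau^e(\bm x_\tau^e)\geq \log\pi_1^e$. I would then compute $L_\tau^e$ at the meta's own play $\bm x_\tau$: plugging $\bm x=\bm x_\tau$ in the definitions gives $\ell_\tau^\eta(\bm x_\tau)=s_\tau^\eta(\bm x_\tau)=0$ and $c_\tau(\bm x_\tau)=(\eta^cGD)^2$. Therefore
\[
\sum_{\tau=1}^T\bigl(L_\tau^e(\bm x_\tau)-L_\tau^e(\bm x_\tau^e)\bigr)\;\leq\;\sum_{\tau=1}^T L_\tau^e(\bm x_\tau)+\log(1/\pi_1^e).
\]
For the convex expert this gives $T(\eta^cGD)^2+\log 3 = \tfrac14 + \log 3$ using $\eta^c=1/(2GD\sqrt T)$ and $\pi_1^c=1/3$, yielding (\ref{ineq:lemwang1}). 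For each grid expert, using $\pi_1^{\eta_i,\ell}=\pi_1^{\eta_i,s}=C/(3(i+1)(i+2))$ with $C\geq 1$ and $i\leq\lceil(1/2)\log_2 T\rceil$ gives $\log(1/\pi_1^e)\leq \log\bigl(3(i+1)(i+2)\bigr)\leq 2\log\bigl(\sqrt 3((1/2)\log_2 T+3)\bigr)$, yielding (\ref{ineq:lemwang2}) and (\ref{ineq:lemwang3}).

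The main obstacle is the mixability/potential-decrease step. The linear cancellation is clean and forced by the form of $\bm x_t$, but checking that the quadratic residuals after $e^{-x}\leq 1-x+x^2$ are absorbed requires a separate case analysis for the three heterogeneous loss forms, and the whole bookkeeping depends on the specific constants baked into the algorithm (the $1/(5GD)$ scaling of the grid, the offset $(\eta^cGD)^2$ in $c_t$, the tripartite $1/3$ split of the prior mass). Carefully tracing these constants so that $|L_t^e|\leq 1$ everywhere---and so that the quadratic residual is truly nonpositive rather than merely $O(\eta^2)$---is the delicate part of the argument.
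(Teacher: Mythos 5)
The paper does not actually reprove this lemma: it is imported from \citet{wang20}, and the paper's only original contribution here is the remark correcting the constant in (\ref{ineq:lemwang1}) from $\log 3$ to $\log 3+1/4$ via $\sum_{t=1}^Tc_t(\bm x_t)=T(\eta^cGD)^2=1/4$. Your overall architecture---the potential $\Phi_t$ with $\Phi_0=1$, the cancellation of the linear terms forced by the tilted convex combination, lower-bounding $\Phi_T$ by a single expert's contribution, and the final accounting with $\pi_1^c=1/3$ and $\pi_1^{\eta_i,\cdot}=C/(3(i+1)(i+2))$---is exactly the standard proof of this lemma, and your endgame correctly reproduces both the corrected constant $\log3+1/4$ and the $2\log(\sqrt3((1/2)\log_2T+3))$ bounds.

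However, the mixability step as you propose it has a genuine gap. After applying $e^{-x}\leq1-x+x^2$ to $x=L_t^e(\bm x_t^e)$ and cancelling the linear terms, you are left needing $\sum_ep_t^e(L_t^e)^2\leq\sum_ep_t^e(\text{built-in quadratic})_e$, and this is false in general: writing $z_e=\eta_e\pro{\nabla f_t(\bm x_t)}{\bm x_t-\bm x_t^e}$, for the $\ell$-type loss one has $L_t^e=-z_e+z_e^2$ and hence $(L_t^e)^2-z_e^2=z_e^2\bigl((1-z_e)^2-1\bigr)=z_e^3(z_e-2)$, which is strictly positive whenever $z_e<0$. Taking two experts with equal weights and equal $\eta$ whose $z$'s are $\pm z_1$ (so the linear cancellation still holds) gives an aggregate leftover of $z_1^4>0$, so your chain of inequalities only yields $\sum_ep_t^e e^{-L_t^e(\bm x_t^e)}\leq1+z_1^4$ rather than $\leq1$, and the induction does not close. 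The standard fix is to not Taylor-expand $e^{-L}$ at all, but instead use $\log(1+z)\geq z-z^2$ for $z\geq-1/2$ (admissible here since $|z_e|\leq\eta_eGD\leq1/5$), which gives $e^{-L_t^e(\bm x_t^e)}=e^{z_e-(\text{quad})_e}\leq e^{z_e-z_e^2}\leq1+z_e$ directly for all three loss types, using Cauchy--Schwarz to see that the built-in quadratic dominates $z_e^2$ for the $s$- and $c$-type losses; summing against $p_t^e$ then gives $\Phi_t\leq\Phi_{t-1}$ with no leftover term. With that single replacement the rest of your argument goes through.
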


\begin{remark}
    According to \citet{wang20}, 
    \[
        \sum_{t=1}^T(c_t(\bm x_t)-c_t(\bm x_t^c))\leq\log 3
    \]
    holds instead of inequality~(\ref{ineq:lemwang1}).
    However, the above inequality needs to be corrected.
    In the last part of the proof of this lemma in their paper, they derived
    \[
        0\leq\sum_{t=1}^Tc_t(\bm x_t^c)+\log\frac1{\pi^c}
        =\sum_{t=1}^Tc_t(\bm x_t^c)+\log3.
    \]
    From the definition of $c_t$ and $\eta^c=1/(2GD\sqrt T)$, we have
    \[
        \sum_{t=1}^Tc_t(\bm x_t)
        =\sum_{t=1}^T(\eta^cGD)^2
        =\sum_{t=1}^T\frac1{4T}
        =\frac14,
    \]
    though they treated this term as $0$.
    Combining these relationships, we get inequality~(\ref{ineq:lemwang1}).
    This mistake seems to be a mere typo since the regret bound in their paper coincides with the result derived from the inequality~(\ref{ineq:lemwang1}).
\end{remark}

\begin{lemma}\citep{wang20}\label{lem:wang2}
    For every grid point $\eta$ and any $\bm x\in\mathcal X$, we have
    \begin{align}
        \sum_{t=1}^T(c_t(\bm x_t^c)-c_t^\eta(\bm x))&\leq\frac34,\label{ineq:lemwang4}\\
        \sum_{t=1}^T(\ell_t^\eta(\bm x_t^{\eta,\ell})-\ell_t^\eta(\bm x))&\leq10d\log T,\label{ineq:lemwang5}
    \end{align}
    and
    \begin{align}
        \sum_{t=1}^T(s_t^\eta(\bm x_t^{\eta,s})-s_t^\eta(\bm x))&\leq1+\log T.\label{ineq:lemwang6}
    \end{align}
\end{lemma}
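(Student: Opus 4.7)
My plan is to derive each of the three inequalities in Lemma~\ref{lem:wang2} as the regret bound of the corresponding Maler expert algorithm against its surrogate loss, observing that the standard regret bounds of OGD and ONS hold against an arbitrary comparator $\bm x \in \mathcal X$, not merely the minimizer of $\sum_{t=1}^T f_t$. The key structural point is that the surrogate losses $c_t$, $s_t^\eta$, and $\ell_t^\eta$ depend on $\bm x$ only through affine and (at most) quadratic terms whose curvature is determined by data alone; consequently, nothing in their gradients or Hessians references $\argmin_{\bm x}\sum_t f_t(\bm x)$, and the original derivation in \citet{wang20} carries over verbatim with an arbitrary $\bm x \in \mathcal X$ in place of the minimizer at the final step.

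Inequality~(\ref{ineq:lemwang4}) follows from the standard adaptive-stepsize OGD bound: Algorithm~\ref{alg:malerconv} runs OGD with stepsize $D/(\eta^c G \sqrt t)$ on $c_t$, whose gradient $\eta^c \nabla f_t(\bm x_t)$ has norm at most $\eta^c G$, so $\sum_{t=1}^T(c_t(\bm x_t^c) - c_t(\bm x)) \leq (3/2) D \eta^c G \sqrt T = 3/4$ after substituting $\eta^c = 1/(2GD\sqrt T)$. Inequality~(\ref{ineq:lemwang6}) uses the fact that $s_t^\eta$ is $2\eta^2 G^2$-strongly convex in $\bm x$, while Algorithm~\ref{alg:malersc} applies the curvature-matched OGD with stepsize $1/(2\eta^2 G^2 t)$; the textbook logarithmic regret bound, together with the gradient norm estimate $\|\nabla s_t^\eta(\bm x)\| \leq \eta G + 2\eta^2 G^2 D = O(\eta G)$ on $\mathcal X$, yields $1 + \log T$. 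For inequality~(\ref{ineq:lemwang5}), since $\eta \leq 1/(5GD)$ on the grid we have $|\eta \pro{\nabla f_t(\bm x_t)}{\bm x_t - \bm x}| \leq 1/5$, so $\ell_t^\eta$ satisfies the local exp-concavity condition that the ONS analysis requires; Algorithm~\ref{alg:malerec} is ONS with $\beta = 1/2$, and its standard regret bound against any comparator in $\mathcal X$ produces $10 d \log T$.

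The main obstacle, such as it is, is pure bookkeeping: I need to verify that the Lyapunov-style recursions used in \citet{wang20} never implicitly invoke $\nabla\bigl(\sum_t f_t\bigr)(\bm x) = 0$ or any other property specific to the minimizer, and that the constants $3/4$, $1 + \log T$, and $10 d \log T$ are all achievable by off-the-shelf OGD/ONS analyses with the specific stepsizes prescribed by Algorithms~\ref{alg:malerconv}--\ref{alg:malersc}. Inspection confirms that each step in the original proof reduces to a standard telescoping inequality valid against an arbitrary anchor point in $\mathcal X$, so the stated constants are preserved unchanged under the generalization to all $\bm x \in \mathcal X$.
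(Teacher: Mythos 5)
Your proposal is correct and takes essentially the same route as the paper: the paper states Lemma~\ref{lem:wang2} as a cited result from \citet{wang20}, justifying the extension from the minimizer to an arbitrary $\bm x\in\mathcal X$ by exactly the observation you make, namely that the underlying OGD/ONS regret analyses of the expert algorithms on the surrogate losses are comparator-agnostic. Your constant checks (e.g., $\tfrac32\eta^c GD\sqrt T=\tfrac34$ for the convex expert and the curvature-matched logarithmic bound for the strongly convex expert) are consistent with the stated bounds.
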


\begin{proof}[Proof of Theorem~\ref{wangthm1}]
    We can get $O(GD\sqrt T)$ bound as follows:
    \begin{align*}
        R_T^{\bm x}\leq\tilde R_T^{\bm x}
        &=\frac1{\eta^c}\sum_{t=1}^T((\eta^cGD)^2-c_t(\bm x))\\
        &=\frac1{\eta^c}\left(\sum_{t=1}^T(c_t(\bm x_t)-c_t(\bm x_t^c))+\sum_{t=1}^T(c_t(\bm x_t^c)-c_t(\bm x))\right)\\
        &\leq2(1+\log3)GD\sqrt T,
    \end{align*}
    where the last inequality follows from inequalities~(\ref{ineq:lemwang1}) and (\ref{ineq:lemwang4}).

    We can get $O(\sqrt{W_T^{\bm x}\log T})$ bound as follows:
    \begin{align*}
        R_T^{\bm x}\leq\tilde R_T^{\bm x}
        &=\frac1{\eta}\sum_{t=1}^T((\eta G)^2\|\bm x_t-\bm x\|^2-s_t^\eta(\bm x))\\
        &=\eta\sum_{t=1}^T G^2\|\bm x_t-\bm x\|^2+\frac1{\eta}\left(\sum_{t=1}^T(s_t(\bm x_t)-s_t(\bm x_t^s))+\sum_{t=1}^T(s_t(\bm x_t^s)-s_t(\bm x))\right)\\
        &\leq\eta W_T^{\bm x}+\frac A\eta,
    \end{align*}
    where $A\coloneqq2\log(\sqrt3((1/2)\log_2T+3))+1+\log T\geq1$ and the last inequality follows from inequalities~(\ref{ineq:lemwang3}) and (\ref{ineq:lemwang6}).
    Let $\hat\eta\coloneqq\sqrt{A/W_T^{\bm x}}\geq1/(5GD\sqrt T)$.
    If $\hat\eta\leq1/(5GD)$, there exists a grid point $\eta_i\in[\hat \eta/2,\hat \eta]$ and we get
    \[
        \tilde R_T^{\bm x}
        \leq \eta_i W_T^{\bm x}+\frac A{\eta_i}
        \leq \hat\eta W_T^{\bm x}+\frac {2A}{\hat\eta}
        =\sqrt{W_T^{\bm x}A}.
    \]
    Otherwise, since $W_T^{\bm x}\leq 25G^2D^2A$, by taking $\eta=\eta_1=1/(5GD)$, we get
    \[
        \tilde R_T^{\bm x}
        \leq \eta_1 W_T^{\bm x}+\frac A{\eta_1}
        \leq 10GDA.
    \]
    Therefore, $\tilde R_T^{\bm x}=O(\sqrt{W_T^{\bm x}\log T})$ holds.

    We can get $O(\sqrt{V_T^{\bm x}d\log T})$ bound as follows:
    \begin{align*}
        R_T^{\bm x}\leq\tilde R_T^{\bm x}
        &=\frac1{\eta}\sum_{t=1}^T(\eta^2(\pro{\nabla f_t(\bm x_t)}{\bm x_t-\bm x})^2-\ell_t^\eta(\bm x))\\
        &=\eta\sum_{t=1}^T (\pro{\nabla f_t(\bm x_t)}{\bm x_t-\bm x})^2+\frac1{\eta}\left(\sum_{t=1}^T(\ell_t(\bm x_t)-\ell_t(\bm x_t^\ell))+\sum_{t=1}^T(\ell_t(\bm x_t^\ell)-\ell_t(\bm x))\right)\\
        &\leq\eta V_T^{\bm x}+\frac B\eta,
    \end{align*}
    where $B\coloneqq2\log(\sqrt3((1/2)\log_2T+3))+10d\log T$ and the last inequality follows from inequalities~(\ref{ineq:lemwang2}) and (\ref{ineq:lemwang5}).
    By similar arguments, $\tilde R_T^{\bm x}=O(\sqrt{V_T^{\bm x}d\log T})$ holds.
\end{proof}

\subsection{Proof of Lemma~\ref{hodai}}\label{hodaiproof}

In this subsection, we prove Lemma~\ref{hodai} used in the proof of Theorem \ref{ecupper}.

\begin{proof}
Since $x\leq3(a+b)/2$ holds when $x\leq b$, it is sufficient to consider the case $x>b$.
If $x\leq\sqrt{ax}+b$, then we have
\[x^2-(a+2b)x+b^2\leq0.\]
By solving this, we have
\[
x
\leq\frac{a+2b+\sqrt{a^2+4ab}}2
\leq a+b+\sqrt{ab}
\leq\frac32(a+b).
\]
The second inequality holds from the inequality $\sqrt{x+y}\leq\sqrt x+\sqrt y$ for $x,y\geq0$, and the last inequality holds from the inequality of arithmetic and geometric means.
\end{proof}

\subsection{Proof of Corollary \ref{ecmetagradmalerusc} for USC}\label{ecuscproof}

This subsection presents the proof of Corollary \ref{ecmetagradmalerusc} for USC.

\begin{proof}
The regret satisfies
\begin{align*}
R_T
&=\sum_{t=1}^Tf_t(\bm x_t)-\sum_{t=1}^Tf_t(\bm x_t^i)+\sum_{t=1}^Tf_t(\bm x_t^i)-\min_{\bm x\in\mathcal X}\sum_{t=1}^Tf_t(\bm x)\\
&=R_T^{\mathrm{meta}}+R_T^{\mathrm{expert}},
\end{align*}
where $R_T^{\mathrm{meta}}:=\sum_{t=1}^Tf_t(\bm x_t)-\sum_{t=1}^Tf_t(\bm x_t^i)$ and $R_T^{\mathrm{expert}}:=\sum_{t=1}^Tf_t(\bm x_t^i)-\min_{\bm x\in\mathcal X}\sum_{t=1}^Tf_t(\bm x)$. 
From Theorem \ref{zhangthm1}, 
\begin{align*}
R_T^{\mathrm{meta}}
&\leq\sum_{t=1}^T\pro{\nabla f_t(\bm x_t)}{\bm x_t-\bm x_t^i}\\
&\leq 4\Gamma GD+\frac\Gamma{\sqrt{\log|\mathcal E|}}\sqrt{4G^2D^2+\sum_{t=1}^T(\pro{\nabla f_t(\bm x_t)}{\bm x_t-\bm x_t^i})^2}\\
&\leq 2\Gamma GD\left(2+\frac1{\sqrt{\log|\mathcal E|}}\right)+\sqrt{\frac{\Gamma^2}{\log|\mathcal E|}\sum_{t=1}^T(\pro{\nabla f_t(\bm x_t)}{\bm x_t-\bm x_t^i})^2}.
\end{align*}
Last inequality holds from the inequality $\sqrt{x+y}\leq\sqrt x+\sqrt y$ for $x,y\geq0$. 

Similar to equation (\ref{ecineq2}), if $R_T^{\mathrm{meta}}>0$, inequality
\[
\sum_{t=1}^T(\pro{\nabla f_t(\bm x_t)}{\bm x_t-\bm x_t^i})^2
\leq\frac2\gamma\sum_{t=1}^T\pro{\nabla f_t(\bm x_t)}{\bm x_t-\bm x_t^i}+k_\gamma G^2D^2
\]
holds.
By combining these inequalities, we have
\begin{align*}
\sum_{t=1}^T\pro{\nabla f_t(\bm x_t)}{\bm x_t-\bm x_t^i}
&\leq 2\Gamma GD\left(2+\frac1{\sqrt{\log|\mathcal E|}}\right)+\sqrt{\frac{\Gamma^2}{\log|\mathcal E|}\left(\frac2\gamma\sum_{t=1}^T\pro{\nabla f_t(\bm x_t)}{\bm x_t-\bm x_t^i}+k_\gamma G^2D^2\right)}\\
&\leq \Gamma GD\left(4+\frac{2+\sqrt{k_\gamma}}{\sqrt{\log|\mathcal E|}}\right)+\sqrt{\frac{2\Gamma^2}{\gamma\log|\mathcal E|}\sum_{t=1}^T\pro{\nabla f_t(\bm x_t)}{\bm x_t-\bm x_t^i}}.
\end{align*}
From Lemma \ref{hodai} with $a=2\Gamma^2/(\gamma\log|\mathcal E|)$ and $b=\Gamma GD(4+(2+\sqrt{k_\gamma})/\sqrt{\log|\mathcal E|})$, we have
\begin{align*}
R_T^{\mathrm{meta}}
&\leq\sum_{t=1}^T\pro{\nabla f_t(\bm x_t)}{\bm x_t-\bm x_t^i}\\
&\leq\frac32\left(\Gamma GD\left(4+\frac{2+\sqrt{k_\gamma}}{\sqrt{\log|\mathcal E|}}\right)+\frac{2\Gamma^2}{\gamma\log|\mathcal E|}\right).
\end{align*}
Since $|\mathcal E|=\mathrm O(\log T)$ in USC, we obtain the following loose upper bound:
\[R_T^{\mathrm{meta}}=O\left(\frac d\gamma\log T+GD\sqrt{kd\log T}\right).\]
On the other hand, by thinking of the case that $i$th expert is MetaGrad or Maler, from Corollary \ref{ecmetagradmalerusc} for MetaGrad and Maler,
\[R_T^{\mathrm{expert}}=O\left(\frac d\gamma\log T+GD\sqrt{kd\log T}\right).\]
Combining these bounds, we get
\[R_T=O\left(\frac d\gamma\log T+GD\sqrt{kd\log T}\right).\]
\end{proof}

\subsection{Proof of Theorem \ref{scupper}}\label{scupperproof}

This subsection presents the proof of Theorem \ref{scupper}.

\begin{proof}
From the definition of strong convexity, we have
\begin{align*}
R_T^{\bm x}
&=\sum_{t=1}^T(f_t(\bm x_t)-f_t(\bm x))\\
&\leq\sum_{t=1}^T\left(\pro{\nabla f_t(\bm x_t)}{\bm x_t-\bm x}-\frac{\lambda_t}2\|\bm x_t-\bm x\|^2\right)\\
&=\tilde R_T^{\bm x}-\frac\lambda{2G^2}W_T^{\bm x}+\sum_{t=1}^T\frac{\lambda-\lambda_t}2\|\bm x_t-\bm x\|^2\\
&\leq\tilde R_T^{\bm x}-\frac\lambda{2G^2}W_T^{\bm x}+\sum_{t: \lambda_t<\lambda}\frac{\lambda-\lambda_t}2D^2\\
&\leq\tilde R_T^{\bm x}-\frac\lambda{2G^2}W_T^{\bm x}+\frac\lambda2k_\lambda D^2.
\end{align*}
If $R_T^{\bm x}<0$, 0 is the upper bound, so it is sufficient to think of the case $R_T^{\bm x}\geq0$. In this case, we have
\[W_T^{\bm x}\leq\frac{2G^2}\lambda\tilde R_T^{\bm x}+k_\lambda G^2D^2.\]
From the assumption of Theorem \ref{scupper}, there exists a positive constant $C>0$ such that
\begin{align*}
\tilde R_T^{\bm x}
&\leq C\left(\sqrt{W_T^{\bm x}r_1(T)}+r_2(T)\right)\\
&\leq C\left(\sqrt{\left(\frac{2G^2}\lambda\tilde R_T^{\bm x}+k_\lambda G^2D^2\right)r_1(T)}+r_2(T)\right)\\
&\leq \sqrt{\frac{2G^2}\lambda C^2r_1(T)\tilde R_T^{\bm x}}+CGD\sqrt{k_\lambda r_1(T)}+Cr_2(T).
\end{align*}
Last inequality holds from the inequality $\sqrt{x+y}\leq\sqrt x+\sqrt y$ for $x,y\geq0$. 
Here, we use Lemma \ref{hodai} with $a=(2G^2/\lambda) C^2r_1(T)$ and $b=CGD\sqrt{k_\lambda r_1(T)}+Cr_2(T)$,
\begin{align*}
\tilde R_T^{\bm x}
&\leq\frac32\left(\frac{2G^2}\lambda C^2r_1(T)+CGD\sqrt{k_\lambda r_1(T)}+Cr_2(T)\right).
\end{align*}
From this inequality and $R_T^{\bm x}\leq\tilde R_T^{\bm x}$, Theorem \ref{scupper} follows.
\end{proof}

\subsection{Proof of Corollary \ref{scmetagradmalerusc} for MetaGrad and Maler}\label{scmetagradmalerproof}

This subsection presents the proof of Corollary \ref{scmetagradmalerusc} for MetaGrad and Maler.
\begin{proof}
As for MetaGrad and Maler, from Theorem \ref{erventhm1} and Theorem \ref{wangthm1}, 
\[\tilde R_T^{\bm x}=O(\sqrt{W_T^{\bm x}\tilde d\log T}+GD\tilde d\log T)\]
holds for any $\bm x\in\mathcal X$, where $\tilde d$ is $d$ and 1 in the case of MetaGrad and Maler, respectively.
Therefore, by Theorem \ref{scupper}, we have
\[R_T^{\bm x}=O\left(\left(\frac {G^2}\lambda+GD\right)\tilde d\log T+GD\sqrt{k_\lambda \tilde d\log T}\right).\]
Here, $k_\lambda$ satisfies
\begin{align*}
k_\lambda
&=\sum_{t=1}^T\max\left\{1-\frac{\lambda_t}\lambda,0\right\}
=\sum_{t\colon\lambda_t<\lambda}\left(1-\frac{\lambda_t}\lambda\right)
\leq k.
\end{align*}
Hence, we have
\[R_T^{\bm x}=O\left(\left(\frac {G^2}\lambda+GD\right)\tilde d\log T+GD\sqrt{k \tilde d\log T}\right),\]
and especially,
\[R_T=O\left(\left(\frac {G^2}\lambda+GD\right)\tilde d\log T+GD\sqrt{k \tilde d\log T}\right).\]
\end{proof}

\subsection{Proof of Corollary \ref{scmetagradmalerusc} for USC}\label{scuscproof}

This subsection presents the proof of Corollary \ref{scmetagradmalerusc} for USC.

\begin{proof}
The same as the proof of Corollary \ref{ecmetagradmalerusc}, we have
\[
R_T= R_T^{\mathrm{meta}}+R_T^{\mathrm{expert}}.
\]
From Theorem \ref{zhangthm1}, we have
\begin{align*}
R_T^{\mathrm{meta}}
&\leq\sum_{t=1}^T\pro{\nabla f_t(\bm x_t)}{\bm x_t-\bm x_t^i}\\
&\leq 2\Gamma GD\left(2+\frac1{\sqrt{\log|\mathcal E|}}\right)+\sqrt{\frac{\Gamma^2}{\log|\mathcal E|}\sum_{t=1}^T\pro{\nabla f_t(\bm x_t)}{\bm x_t-\bm x_t^i}^2}\\
&\leq 2\Gamma GD\left(2+\frac1{\sqrt{\log|\mathcal E|}}\right)+\sqrt{\frac{\Gamma^2G^2}{\log|\mathcal E|}\sum_{t=1}^T\|\bm x_t-\bm x_t^i\|^2}.
\end{align*}

From the definition of strong convexity, we have
\begin{align*}
R_T^{\mathrm{meta}}
&=\sum_{t=1}^T(f_t(\bm x_t)-f_t(\bm x_t^i))\\
&\leq\sum_{t=1}^T\left(\pro{\nabla f_t(\bm x_t)}{\bm x_t-\bm x_t^i}-\frac\lambda2\|\bm x_t-\bm x_t^i\|^2\right)+\sum_{t=1}^T \frac\lambda2\max\left\{1-\frac{\lambda_t}\lambda,0\right\}\|\bm x_t-\bm x_t^i\|^2\\
&\leq\sum_{t=1}^T\pro{\nabla f_t(\bm x_t)}{\bm x_t-\bm x_t^i}-\frac\lambda2\sum_{t=1}^T\|\bm x_t-\bm x_t^i\|^2+\frac\lambda2k_\lambda D^2.
\end{align*}
If $R_T^{\mathrm{meta}}<0$, 0 is the upper bound, so it is sufficient to think of the case $R_T^{\mathrm{meta}}\geq0$. In this case, we have
\[\sum_{t=1}^T\|\bm x_t-\bm x_t^i\|^2
\leq\frac{2}\lambda\sum_{t=1}^T\pro{\nabla f_t(\bm x_t)}{\bm x_t-\bm x_t^i}+k_\lambda D^2.\]
By combining these inequalities, we have
\begin{align*}
\sum_{t=1}^T\pro{\nabla f_t(\bm x_t)}{\bm x_t-\bm x_t^i}
&\leq 2\Gamma GD\left(2+\frac1{\sqrt{\log|\mathcal E|}}\right)+\sqrt{\frac{\Gamma^2G^2}{\log|\mathcal E|}\left(\frac{2}\lambda\sum_{t=1}^T\pro{\nabla f_t(\bm x_t)}{\bm x_t-\bm x_t^i}+k_\lambda D^2\right)}\\
&\leq \Gamma GD\left(4+\frac{2+\sqrt{k_\lambda}}{\sqrt{\log|\mathcal E|}}\right)+\sqrt{\frac{2\Gamma^2G^2}{\lambda\log|\mathcal E|}\sum_{t=1}^T\pro{\nabla f_t(\bm x_t)}{\bm x_t-\bm x_t^i}}.
\end{align*}
From Lemma \ref{hodai} with $a=2\Gamma^2G^2/(\lambda\log|\mathcal E|)$ and $b=\Gamma GD(4+(2+\sqrt{k_\lambda})/\sqrt{\log|\mathcal E|})$, we have
\[
R_T^{\mathrm{meta}}
\leq\sum_{t=1}^T\pro{\nabla f_t(\bm x_t)}{\bm x_t-\bm x_t^i}
\leq\frac32\left(\Gamma GD\left(4+\frac{2+\sqrt{k_\lambda}}{\sqrt{\log|\mathcal E|}}\right)+\frac{2\Gamma^2G^2}{\lambda\log|\mathcal E|}\right).
\]
Since $|\mathcal E|=O(\log T)$ in USC, we obtain the following loose upper bound:
\[R_T^{\mathrm{meta}}=O\left(\left(\frac {G^2}\lambda+GD\right)\log T+GD\sqrt{k \log T}\right).\]
On the other hand, by thinking of the case that $i$th expert is Maler, from Corollary \ref{scmetagradmalerusc} for Maler, we have
\[R_T^{\mathrm{expert}}=O\left(\left(\frac {G^2}\lambda+GD\right)\log T+GD\sqrt{k \log T}\right).\]
Combining these bounds, we get
\[R_T=O\left(\left(\frac {G^2}\lambda+GD\right)\log T+GD\sqrt{k \log T}\right).\]
\end{proof}

\subsection{The Lemma in the Proof of Theorem~\ref{thm:hybrid}}\label{sec:for_ons}

In this subsection, we introduce the following lemma used in the proof of Theorem~\ref{thm:hybrid}.

\begin{lemma}\label{lem:for_ons}\citep{hazan16}
    Let $\bm A\succeq\bm B\succ\bm O$ be positive definite matrices.
    We then have
    \[
        \tr(\bm A^{-1}(\bm A-\bm B))
        \leq\log\frac{|\bm A|}{|\bm B|}.
    \]
\end{lemma}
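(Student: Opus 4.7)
The plan is to reduce this matrix inequality to the elementary scalar inequality $1-x \le -\log x$ for $x>0$ (equivalently $x-1 \ge \log x$) applied to the eigenvalues of an appropriately chosen matrix.

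First I would rewrite the left-hand side using linearity of trace: $\tr(\bm A^{-1}(\bm A-\bm B)) = d - \tr(\bm A^{-1}\bm B)$, where $d$ is the dimension. To handle $\tr(\bm A^{-1}\bm B)$ and the determinant ratio uniformly, I would introduce the symmetric matrix $\bm C := \bm A^{-1/2}\bm B\bm A^{-1/2}$, which is well-defined since $\bm A \succ \bm O$. Because $\bm A \succeq \bm B \succ \bm O$, conjugation by $\bm A^{-1/2}$ yields $\bm O \prec \bm C \preceq \bm I$, so the eigenvalues $\lambda_1,\ldots,\lambda_d$ of $\bm C$ all lie in $(0,1]$. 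Using the cyclic property of trace, $\tr(\bm A^{-1}\bm B) = \tr(\bm C) = \sum_i \lambda_i$, and similarly $|\bm A^{-1}\bm B| = |\bm C| = \prod_i \lambda_i$, which gives $\log(|\bm A|/|\bm B|) = -\log|\bm C| = -\sum_i \log\lambda_i$.

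Next I would put these pieces together. The inequality to prove becomes
\[
d - \sum_{i=1}^d \lambda_i \;\le\; -\sum_{i=1}^d \log \lambda_i,
\]
i.e.\ $\sum_i (1-\lambda_i) \le \sum_i \log(1/\lambda_i)$, which follows by summing the elementary inequality $1-\lambda \le -\log\lambda$ (valid for all $\lambda>0$, with equality at $\lambda=1$) over the eigenvalues. Since here $\lambda_i \in (0,1]$, both sides are nonnegative and the inequality applies without issue.

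I do not expect any serious obstacle: the main care is to justify the reduction to eigenvalues cleanly (ensuring $\bm A^{-1/2}$ exists, and that $\bm C$ is symmetric so it has a real eigendecomposition) and to invoke the scalar inequality on the correct domain. The whole argument fits in a few lines, and no additional algorithmic or OCO machinery from the paper is needed.
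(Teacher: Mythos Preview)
Your argument is correct. Note, however, that the paper does not actually supply a proof of this lemma: it merely states the inequality and cites \citet{hazan16}. Your reduction via $\bm C=\bm A^{-1/2}\bm B\bm A^{-1/2}$ to the scalar inequality $1-\lambda\le -\log\lambda$ on the eigenvalues is the standard proof (and is essentially the argument in Hazan's textbook), so there is nothing to contrast.
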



\section{Numerical Experiments}\label{sec:exp}

In this section, we explain experimental results.
We compare the performances of 5 OCO algorithms; OGD with stepsizes $\eta_t=D/(G\sqrt t)$, ONS, MetaGrad, Algorithm~\ref{alg:hybrid} with $S_1=\emptyset$ (Con-ONS), and Algorithm~\ref{alg:hybrid} with $S_2=\emptyset$ (Con-OGD).
We include OGD, ONS, and MetaGrad because OGD and ONS are famous OCO algorithms, and MetaGrad is one of the universal algorithms. 
All the experiments are implemented in Python 3.9.2 on a MacBook Air whose processor is 1.8 GHz dual-core Intel Core i5 and memory is 8GB.

\subsection{Experiment 1: Contaminated Exp-Concave}

In this experiment, we set $d=1$, $\mathcal X=[0,1]$ and the objective function is as follows:
\[
    f_t(x)\coloneqq\begin{dcases}
        100x&t\in I,\\
        -\log(x+0.01)&\mathrm{otherwise},
    \end{dcases}
\]
where $I\subset[T]$ is chosen uniformly at random under the condition that $|I|=k$.
$(f_1,f_2,\ldots,f_T)$ is $k$-contaminated 1-exp-concave and the minimum value of $\sum_{t=1}^Tf_t$ is $T-2k-(T-k)\log((T-k)/(100k))$ if $2k<T$.
We repeat this numerical experiment in 100 different random seeds and calculate their mean and standard deviation.
Other parameters are shown in Table~\ref{tab:parameter_conEC}.

We compare the performances of 4 OCO algorithms: OGD, ONS, MetaGrad, and Con-ONS.
The parameters of ONS are set as $\gamma=0.005$ and $\varepsilon=1/(\gamma^2D^2)=40000$.

\begin{table}[tb]
    \caption{Parameter setting in Experiment 1.}
    \label{tab:parameter_conEC}
    \vskip 0.15in
    \begin{center}
    \begin{small}
    \begin{sc}
    \begin{tabular}{ccccccc}
    \toprule
    $x_1$ & $x^\ast$ & $T$ & $k$ & $D$    & $G$ & $\alpha$\\
    \midrule
    $0$    & $0.02$ & 1000  & 250   & $1$ & 100 & 1\\
    \bottomrule
    \end{tabular}
    \end{sc}
    \end{small}
    \end{center}
    \vskip -0.1in
\end{table}

The time variation of regret and $x_t$ is shown in Figure~\ref{fig:result_conEC}.
In the graphs presented in this paper, the error bars represent the magnitude of the standard deviation.
Standard deviations in the graphs are large because contamination causes fluctuation in the sequence of solutions.
Only points where t is a multiple of 25 are plotted to view the graph easily.
The left graph shows that the regrets of all methods are sublinear.
MetaGrad and ONS perform particularly well, followed by Con-ONS.
From the right graph, we can confirm that $x_t$ of all methods converge to the optimal solution quickly.
OGD seems influenced by contamination a little stronger than other methods.

\begin{figure}[tb]
    \vskip 0.2in
    \begin{center}
    \begin{tabular}{cc}
    \begin{minipage}{0.45\hsize}
    \centerline{\includegraphics[width=\columnwidth]{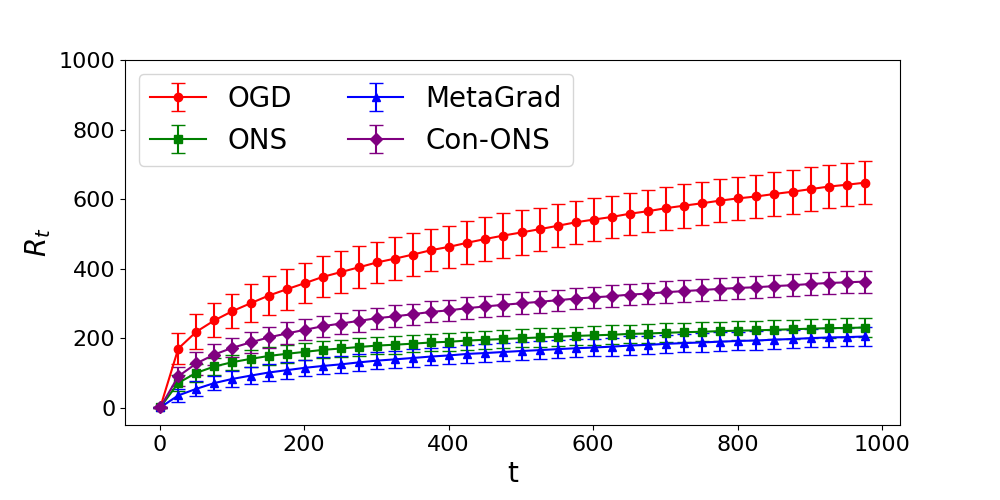}}
    \end{minipage}&
    \begin{minipage}{0.45\hsize}
        \centerline{\includegraphics[width=\columnwidth]{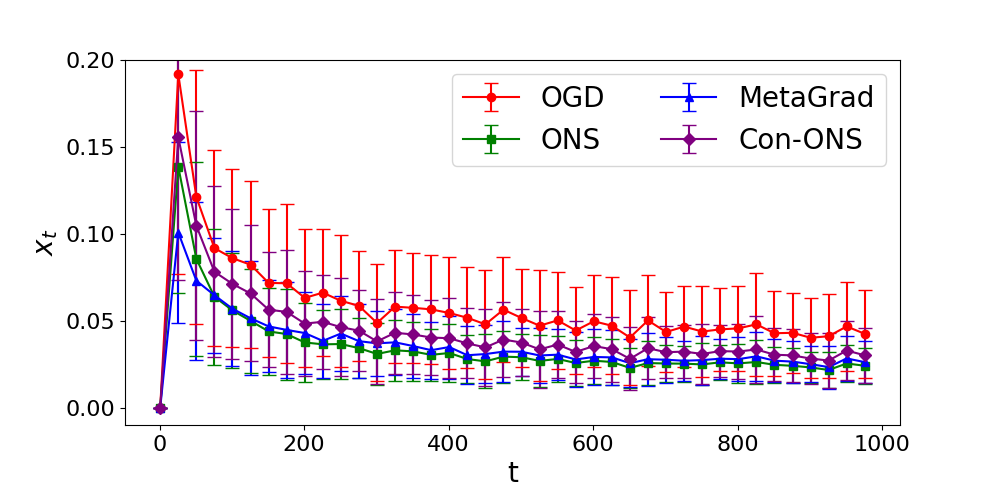}}
    \end{minipage}
    \end{tabular}    
    \caption{The comparison of the time variation of regret (left) and $x_t$ (right) in Experiment 1.}
    \label{fig:result_conEC}
    \end{center}
    \vskip -0.2in
\end{figure}

\subsection{Experiment 2: Contaminated Strongly Convex}

In this experiment, $d=1$, $\mathcal X=[0,1]$ and the objective function is as follows:
\[
    f_t(x)\coloneqq\begin{dcases}
        x&t\in I,\\
        \frac12(x-1)^2&\mathrm{otherwise},
    \end{dcases}
\]
where $I\subset[T]$ is chosen uniformly at random under the condition that $|I|=k$.
$(f_1,f_2,\ldots,f_T)$ is $k$-contaminated 1-strongly convex and the minimum value of $\sum_{t=1}^Tf_t$ is $(2kT-3k^2)/(2T-2k)$ if $2k<T$.
We repeat this numerical experiment in 100 different random seeds and calculate their mean and standard deviation.
Other parameters are shown in Table~\ref{tab:parameter_conSC}.

We compare the performances of 3 OCO algorithms: OGD, MetaGrad, and Con-OGD.

\begin{table}[tb]
    \caption{Parameter setting in Experiment 2.}
    \label{tab:parameter_conSC}
    \vskip 0.15in
    \begin{center}
    \begin{small}
    \begin{sc}
    \begin{tabular}{ccccccc}
    \toprule
    $x_1$ & $x^\ast$ & $T$ & $k$ & $D$    & $G$ & $\lambda$\\
    \midrule
    $0$    & $2/3$ & 1000  & 250   & $1$ & 1 & 1\\
    \bottomrule
    \end{tabular}
    \end{sc}
    \end{small}
    \end{center}
    \vskip -0.1in
\end{table}

The time variation of regret and $x_t$ is shown in Figure~\ref{fig:result_conSC}.
Only points where $t$ is a multiple of 25 are plotted to view the graph easily.
The left graph shows that the regrets of all methods are sublinear.
The performance of Con-OGD is the best, followed by MetaGrad and OGD, showing correspondence with the theoretical results.
From the right graph, we can confirm that $x_t$ of all methods converge to the optimal solution quickly.

\begin{figure}[tb]
    \vskip 0.2in
    \begin{center}
    \begin{tabular}{cc}
    \begin{minipage}{0.45\hsize}
    \centerline{\includegraphics[width=\columnwidth]{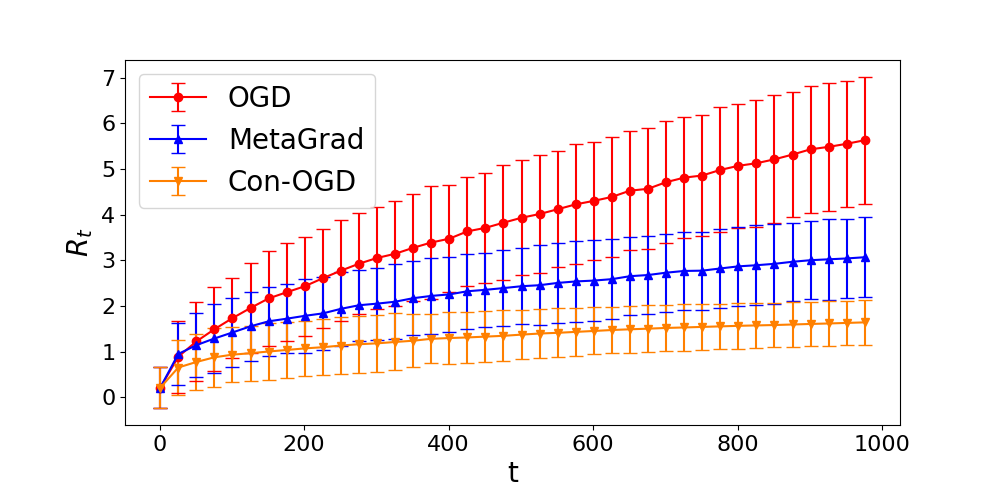}}
    \end{minipage}&
    \begin{minipage}{0.45\hsize}
        \centerline{\includegraphics[width=\columnwidth]{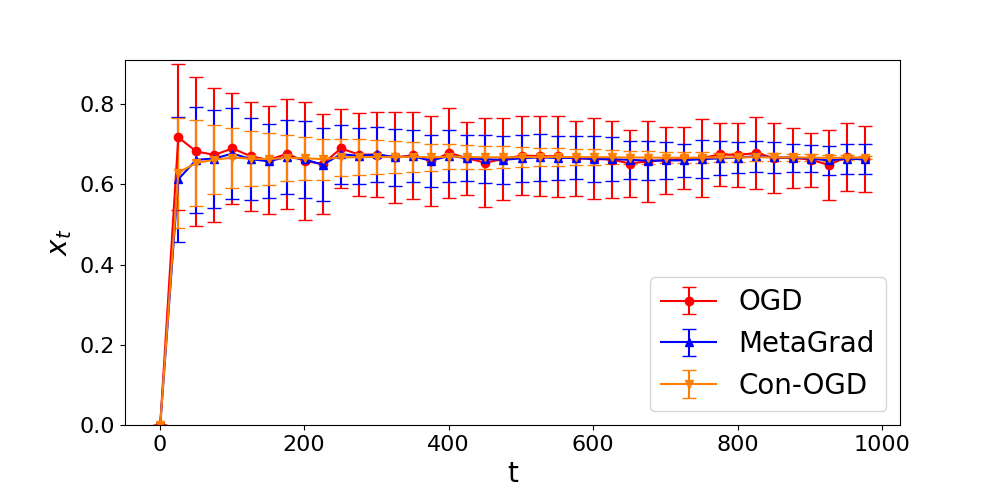}}
    \end{minipage}
    \end{tabular}    
    \caption{The comparison of the time variation of regret (left) and $x_t$ (right) in Experiment 2.}
    \label{fig:result_conSC}
    \end{center}
    \vskip -0.2in
\end{figure}

\subsection{Experiment 3: Mini-Batch Least Mean Square Regressions}

Experimental settings are as follows.
We use the squared loss as the objective function:
\[
    f_t(\bm x):=\frac1n\sum_{i=1}^n(\pro{\bm a_{t,i}}{\bm x}-b_{t,i})^2,
\]
which is exemplified in Example~\ref{ex:leastsquare}.
In this experiment, each component of the vector $\bm a_{t,i}$ is taken from a uniform distribution on $[1,2]$ independently.
We set $\mathcal X=[0,1]^d$ and assume there exists an optimal solution $\bm x^\ast$ which is taken from a uniform distribution on $\mathcal X$, i.e., we take $b_{t,i}=\pro{\bm a_{t,i}}{\bm x^\ast}$.
We set $k$ firstly and compute thresholds $\alpha$ and $\lambda$ based on $k$, though this is impossible in real applications.
Parameters $G$, $\lambda$, $\alpha$ are calculated for each $\bm a_{t,i}$ and $b_{t,i}$, e.g., $G\simeq429$, $\lambda\simeq0.0969$, and $\alpha\simeq5.28\times10^{-7}$ for some sequence.
The parameters of ONS are set as described in Section~\ref{onsalg}.
Other parameters are shown in Table~\ref{tab:parameter}.

\begin{table}[tb]
    \caption{Parameter setting in Experiment 3.}
    \label{tab:parameter}
    \vskip 0.15in
    \begin{center}
    \begin{small}
    \begin{sc}
    \begin{tabular}{cccccc}
    \toprule
    $\bm x_1$ & $n$ & $d$ & $T$ & $k$ & $D$      \\
    \midrule
    $\bm0$    & 10  & 5   & 1000  & 250   & $\sqrt5$ \\
    \bottomrule
    \end{tabular}
    \end{sc}
    \end{small}
    \end{center}
    \vskip -0.1in
\end{table}

\begin{figure}[tb]
    \vskip 0.2in
    \begin{center}
    \begin{tabular}{cc}
    \begin{minipage}{0.45\hsize}
    \centerline{\includegraphics[width=\columnwidth]{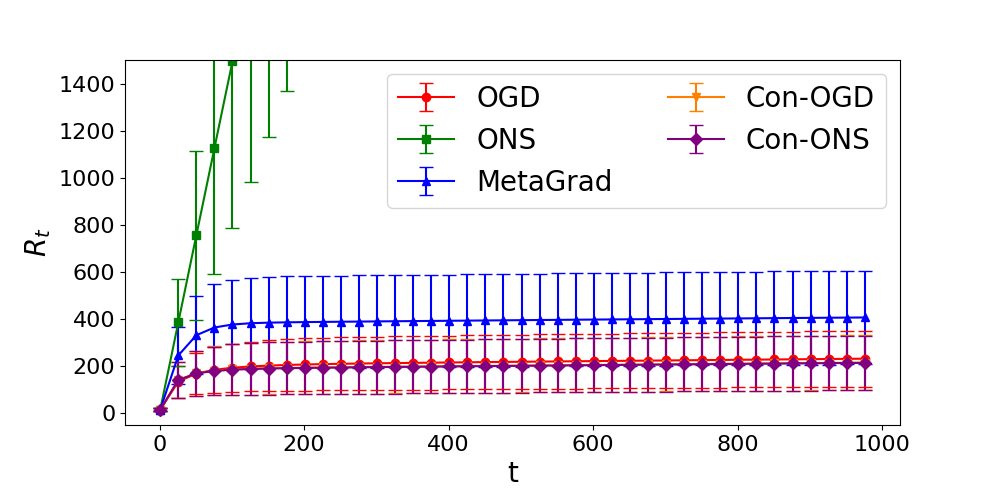}}
    \end{minipage}&
    \begin{minipage}{0.45\hsize}
        \centerline{\includegraphics[width=\columnwidth]{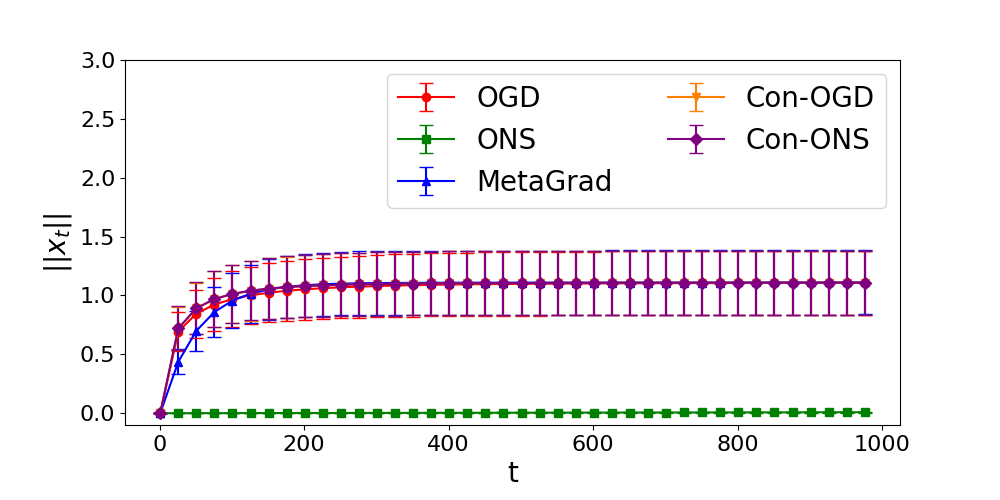}}
    \end{minipage}
    \end{tabular}    
    \caption{The comparison of the time variation of regret (left) and $\|\bm x_t\|$ (right).}
    \label{fig:result_LR}
    \end{center}
    \vskip -0.2in
\end{figure}

The time variation of regret and $\|\bm x_t\|$ is shown in Figure~\ref{fig:result_LR}.
The performances of OGD, Con-OGD, and Con-ONS are almost the same in this experiment.
The left graph shows that OGD's, MetaGrad's, and our proposed method's regrets are sublinear and consistent with the theoretical results.
Though this is out of the graph, ONS's regret is almost linear even if we take $T=10000$.
From the right graph, we can confirm that $\|\bm x_t\|$ of OGD, MetaGrad, and proposed methods converge to some point quickly, and that of ONS does not change so much.
The poor performance of ONS is because $\gamma$ is too small to take large enough stepsizes.
This result shows the vulnerability of ONS.

\end{document}